\newtheorem{theorem}{Theorem}[section]
\newtheorem{proposition}{Proposition}[section]
\newtheorem{corollary}{Corollary}[section]
\theoremstyle{definition}
\newtheorem{definition}{Definition}[section]
\newtheorem{remark}{Remark}[section]
\newtheorem{example}{Example}[section]
\numberwithin{equation}{section}
\begin{document}


\title{New number fields with known \(p\)-class tower}

\author{Daniel C. Mayer}
\address{Naglergasse 53\\8010 Graz\\Austria}
\email{algebraic.number.theory@algebra.at}
\urladdr{http://www.algebra.at}

\thanks{Research supported by the Austrian Science Fund (FWF): P 26008-N25}

\subjclass[2000]{Primary 11R37, 11R29, 11R11, 11R16, 11R20;
secondary 20D15, 20F05, 20F12, 20F14, 20-04}
\keywords{\(p\)-class field towers, \(p\)-capitulation, \(p\)-class groups, 
quadratic fields, cubic fields, dihedral fields;
finite \(p\)-groups, descendant trees, \(p\)-group generation algorithm}

\date{October 02, 2015}

\begin{abstract}
The \(p\)-class tower \(\mathrm{F}_p^\infty(k)\) of a number field \(k\)
is its maximal unramified pro-\(p\) extension.
It is considered to be known when the \(p\)-tower group, that is
the Galois group \(G:=\mathrm{Gal}(\mathrm{F}_p^\infty(k)\vert k)\),
can be identified by an explicit presentation.
The main intention of this article is to characterize
assigned finite \(3\)-groups uniquely
by abelian quotient invariants of subgroups of finite index,
and to provide evidence of actual realizations
of these groups by \(3\)-tower groups \(G\)
of real quadratic fields \(K=\mathbb{Q}(\sqrt{d})\)
with \(3\)-capitulation type \((0122)\) or \((2034)\).
\end{abstract}

\maketitle



\section{Introduction}
\label{s:Intro}

Given a prime \(p\),
the Hilbert \(p\)-class field tower \(\mathrm{F}_p^\infty(k)\)
is the maximal unramified pro-\(p\) extension
of an algebraic number field \(k\).
It is considered to be \textit{known}
if a presentation of its Galois group is given
(and not merely its length \(\ell_p(k)\)).
The key for determining the Galois group
\[G:=\mathrm{G}_p^\infty(k)=\mathrm{Gal}(\mathrm{F}_p^\infty(k)\vert k),\]
which is briefly called the \(p\)-\textit{class tower group} of \(k\),
is the structure of \(p\)-class groups \(\mathrm{Cl}_p(L)\)
of unramified (abelian or non-abelian) \(p\)-extensions \(L\vert k\),
collected in IPADs (\textit{index-\(p\) abelianization data})
\cite{Ma6}.
Our main goal is to present \textit{new criteria}
for the occurrence of assigned \(3\)-class tower groups \(G\)
in terms of their IPADs
and proofs of their \textit{actual realization} by suitable base fields \(k\).

\textit{Complex} quadratic fields \(k=\mathbb{Q}(\sqrt{d})\)
with negative fundamental discriminant \(d<0\)
were the first objects
whose \(p\)-class tower has been studied for an odd prime \(p\)
by Scholz and Taussky in \(1934\)
\cite{SoTa}.
These authors used group theory to classify the
\(3\)-capitulation type \(\varkappa_1(k)=(\ker(j_i))_{1\le i\le 4}\)
of complex quadratic fields \(k\)
with \(3\)-class group \(\mathrm{Cl}_3(k)\simeq (3,3)\)
and four class extension homomorphisms \(j_i:\mathrm{Cl}_3(k)\to\mathrm{Cl}_3(L_i)\)
to the unramified cyclic cubic superfields \(L_i\), \(1\le i\le 4\),
into \(13\) possible cases,
denoting sections of similar types with upper case letters D,E,F,G,H
and proving that other sections A,B,C are impossible
(B,C in general, A for quadratic fields).


In the present article, we use IPADs of 2\({}^{\mathrm{nd}}\) order to show that
\textit{real} quadratic fields \(K=\mathbb{Q}(\sqrt{d})\)
with one of the \(3\)-capitulation types
c.18, \(\varkappa=(0122)\), and c.21, \(\varkappa=(2034)\),
where \(0\) denotes a total capitulation
\cite[p.477]{Ma2},
have \(3\)-class field towers of \textit{exact length} \(\ell_3(K)=\mathbf{3}\).
These types are strange because they are unique
with the following properties:
Every finite metabelian \(3\)-group \(\mathfrak{G}\)
with one of these types \(\varkappa_1(\mathfrak{G})\)
has coclass \(\mathrm{cc}(\mathfrak{G})=2\) and is \textit{infinitely capable}
with nuclear rank \(\mathrm{NR}(\mathfrak{G})\ge 1\)
\cite{Ma5}.
So \(\mathfrak{G}\) cannot be leaf of a tree.
The \textit{second \(3\)-class group}
\(\mathfrak{G}:=\mathrm{G}_3^2(K)=\mathrm{Gal}(\mathrm{F}_3^2(K)\vert K)\)
\cite{Ma1}
of the real quadratic fields \(K=\mathbb{Q}(\sqrt{d})\), \(d>0\), under investigation
is such a group.

Section c (containing types 18 and 21) was found group-theoretically by Nebelung
\cite{Ne}
in \(1989\), \(55\) years after the Scholz--Taussky sections D,...,H.
However, 
for nearly \(20\) years, examples of fields with these types,
which certainly cannot occur for complex quadratic fields, were unknown,
and we were the first who discovered suitable real quadratic base fields
\(K=\mathbb{Q}(\sqrt{d})\), namely

\(d = \mathbf{540\,365}\) with type c.21 on January \(01\), \(2008\), and

\(d = \mathbf{534\,824}\) with type c.18 on August \(20\), \(2009\)
\cite[Tbl.5, p.499]{Ma1}.

\noindent
Inspired by the
the International Conference on Groups and Algebras in Shanghai
at the end of July \(2015\),
we suddenly had the rewarding idea and the courage
to study their \(3\)-class tower.



\section{Main result}
\label{s:MainResult}

\begin{theorem}
\label{thm:Main}
The following real quadratic fields \(K=\mathbb{Q}(\sqrt{d})\),
with fundamental discriminant \(d>0\),
\(3\)-class group \(\mathrm{Cl}_3(K)\) of type \((3,3)\),
and \(3\)-capitulation type \(\varkappa_1(K)\) belonging to section c,
have a \(3\)-class field tower
\(K<\mathrm{F}_3^1(K)<\mathrm{F}_3^2(K)<\mathrm{F}_3^3(K)=\mathrm{F}_3^\infty(K)\)
of exact length \(\ell_3(K)=3\):

\begin{enumerate}

\item
all the \(4461\) fields of type \(\mathrm{c}.18\), \(\varkappa_1(K)=(0122)\), with \(d<10^9\), and

\item
all the \(4528\) fields of type \(\mathrm{c}.21\), \(\varkappa_1(K)=(2034)\), with \(d<10^9\).

\end{enumerate}

\end{theorem}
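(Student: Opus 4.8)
The plan is to combine computational class field theory with the group-theoretic classification of finite $3$-groups of coclass $2$ in section c. First I would determine, for each of the listed discriminants $d$, the second $3$-class group $\mathfrak{G}:=\mathrm{G}_3^2(K)$. Since $\mathrm{Cl}_3(K)\simeq(3,3)$, there are exactly four unramified cyclic cubic extensions $L_1,\dots,L_4$ of $K$, and I would compute the abelian type invariants of $\mathrm{Cl}_3(L_i)$ together with the capitulation kernels $\ker(j_i)$; the capitulation type $\varkappa_1(K)$ is already pinned to c.18 or c.21 by hypothesis. These invariants form the first-order IPAD of $K$. The key point is that, by Nebelung's classification \cite{Ne} together with the sharpened results in \cite{Ma1,Ma5}, the metabelian $3$-groups realizing $\varkappa=(0122)$ or $\varkappa=(2034)$ on a given level of the coclass tree are highly restricted, so that the IPAD — refined if necessary to second order (the IPAD of $2^{\mathrm{nd}}$ order, recording abelian quotient invariants of the index-$9$ subgroups as well) — identifies $\mathfrak{G}$ among the finitely many candidates.

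Next I would invoke the structural fact quoted in the introduction: every finite metabelian $3$-group with capitulation type in $\{(0122),(2034)\}$ has coclass $2$, nuclear rank $\mathrm{NR}\ge 1$, and is therefore \emph{not} a leaf — it has proper descendants. Hence $\mathfrak{G}=\mathrm{G}_3^2(K)$ cannot coincide with $G=\mathrm{G}_3^\infty(K)$, which forces $\ell_3(K)\ge 3$. To obtain the matching upper bound $\ell_3(K)\le 3$, I would identify the (non-metabelian) candidates for $G$ among the immediate descendants, or low-level descendants, of $\mathfrak{G}$ in the descendant tree produced by the $p$-group generation algorithm, and verify that exactly one such descendant $G$ has metabelianization $G/G''\simeq\mathfrak{G}$ and the correct IPAD, while $G$ itself is a Schur $\sigma$-group (balanced presentation, relation rank equal to generator rank $2$, admitting an automorphism acting as inversion on $G/G'$) — the group-theoretic signature of a $3$-class tower group of a real quadratic field whose tower terminates. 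Establishing that this $G$ has derived length $3$, i.e. $G''\ne 1$ but $G'''=1$, then yields $\ell_3(K)=3$ exactly.

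The main obstacle I anticipate is twofold. Computationally, one must actually carry out, for each of the roughly $9000$ discriminants up to $10^9$, the class field theory needed to extract the $2^{\mathrm{nd}}$-order IPAD: this means constructing the $L_i$ and then their unramified cyclic cubic extensions (the index-$9$ subfields), and computing $3$-class groups of these degree-$18$ fields — feasible with PARI/GP or Magma but requiring care with precision and verification. Theoretically, the delicate step is proving that the $2^{\mathrm{nd}}$-order IPAD genuinely \emph{separates} the relevant candidate groups, i.e. that the combinatorial data collected really does determine $\mathfrak{G}$ and then $G$ uniquely; this is exactly the ``new criteria'' the paper promises, and it rests on an explicit analysis, via the $p$-group generation algorithm, of the finite segment of the descendant tree hanging below the root of section c. Once the separating power of the IPAD is established abstractly, the per-field verification is a finite check, but justifying the separation — ruling out that two non-isomorphic descendants share the same $2^{\mathrm{nd}}$-order IPAD — is where the real work lies.
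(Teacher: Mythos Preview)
Your overall architecture---identify $\mathfrak{G}=\mathrm{G}_3^2(K)$ via the IPAD, then locate $G=\mathrm{G}_3^\infty(K)$ among groups with $G/G''\simeq\mathfrak{G}$---matches the paper. But both your lower-bound and upper-bound mechanisms are off in ways that matter.

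For $\ell_3(K)\ge 3$ you argue that $\mathfrak{G}$ has nuclear rank $\ge 1$, hence is not a leaf, hence cannot equal $G$. That inference is invalid: the $p$-tower group of a number field is under no obligation to be terminal in the descendant tree, and capable groups do occur as tower groups. The paper's actual argument is arithmetic, via the (corrected) Shafarevich bound: for a real quadratic field $K$ one has $d_2(G)\le d_1(G)+r=2+1=3$, whereas the metabelian mainline vertices $\mathfrak{G}$ of type c.18 or c.21 all have $p$-multiplicator rank $4$, hence $d_2(\mathfrak{G})\ge 4$. This is what rules out $G\simeq\mathfrak{G}$.

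For $\ell_3(K)\le 3$ you look for a \emph{Schur $\sigma$-group} (balanced presentation, $d_2=d_1=2$). That is the signature for \emph{complex} quadratic fields, where $r=0$; for real quadratic fields the Shafarevich cover allows $d_2(G)\le 3$, and indeed the groups the paper finds (e.g.\ $\langle 2187,284\rangle$, $\langle 2187,291\rangle$, $\langle 2187,307\rangle$, $\langle 2187,308\rangle$) have relation rank $3$, not $2$. Moreover $G$ is not always determined uniquely by the second-order IPAD: in the excited states there remain two or three indistinguishable candidates. What the paper actually does is compute the full \emph{cover} $\mathrm{cov}(\mathfrak{G})=\{H\text{ finite}: H/H''\simeq\mathfrak{G}\}$, discard the members with $d_2\ge 4$ (again by Shafarevich), and observe that every surviving candidate has derived length exactly $3$. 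Note also that these non-metabelian cover elements are not descendants of $\mathfrak{G}$ in the coclass-$2$ tree; they appear via bifurcations to higher coclass, so searching only among immediate descendants of $\mathfrak{G}$ would miss them.
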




\section{Second \(3\)-class groups as tree vertices}
\label{s:MetabelianTrees}

Before we start with required theoretical foundations in section \S\
\ref{s:ArtinPattern},
let us have a glance at the two trees
where all second \(3\)-class groups \(\mathfrak{G}\)
with type \(\varkappa_1(\mathfrak{G})\) in Nebelung's section c
are located as \textit{mainline vertices}.
These trees first arose
in our investigation of metabelian \(3\)-groups \(\mathfrak{G}\)
with type \(\varkappa_1(\mathfrak{G})\) in the Scholz--Taussky section E
\cite[Fig.3.6--3.7, pp.442--443]{Ma4},
which are represented by \textit{terminal leaves}, without exceptions.
They set in with
\(\langle 2187,288\ldots 290\rangle\) on the tree \(\mathcal{T}^2(\langle 243,6\rangle)\)
and with
\(\langle 2187,302\vert 304\vert 306\rangle\) on the tree \(\mathcal{T}^2(\langle 243,8\rangle)\).
The \(3\)-tower length \(\ell_3(k)=3\) of \textit{complex} quadratic fields \(k\)
with second \(3\)-class groups
\(\mathfrak{G}=\mathrm{Gal}(\mathrm{F}_3^2(k)\vert k)\)
of this kind was determined successfully by M.R. Bush and ourselves
\cite[Cor.4.1.1, p.775]{BuMa}
for the first time. We discovered that the corresponding \(3\)-class tower groups
\(G=\mathrm{Gal}(\mathrm{F}_3^\infty(k)\vert k)\)
with \(G/G^{\prime\prime}\simeq\mathfrak{G}\)
are sporadic isolated vertices (outside of coclass trees)
with unbounded higher coclass \(\mathrm{cc}(G)\ge 3\)
but with fixed derived length \(\mathrm{dl}(G)=3\), as visualized in
\cite[Fig.8--9, pp.188--189]{Ma6}.
These results concerning section E were generalized in
\cite[Fig.3--4, pp.754--755]{Ma7}.

Generally, the vertices of the coclass trees in the Figures
\ref{fig:TreeOverviewQ}
and
\ref{fig:TreeOverviewU}
represent isomorphism classes of finite \(3\)-groups.
Two vertices are connected by an edge \(H\to G\) if
\(G\) is isomorphic to the last lower central quotient \(H/\gamma_c(H)\)
where \(c=\mathrm{cl}(H)\) denotes the nilpotency class of \(H\),
and \(\lvert H\rvert=3\lvert G\rvert\), that is,
\(\gamma_c(H)\) is cyclic of order \(3\).
(See also
\cite[\S\ 2.2, p.410--411]{Ma4}
and
\cite[\S\ 4, p.163--164]{Ma5}.)

The vertices of the coclass trees in both figures
are classified by using different symbols:

\begin{enumerate}
\item
big full discs {\Large \(\bullet\)} represent metabelian groups \(\mathfrak{G}\)
with bicyclic centre of type \((3,3)\) and defect \(k(\mathfrak{G})=0\)
\cite[\S\ 3.3.2, p.429]{Ma4},
\item
small full discs {\footnotesize \(\bullet\)} represent metabelian groups \(\mathfrak{G}\)
with cyclic centre of order \(3\) and defect \(k(\mathfrak{G})=1\),
\item
small contour squares {\scriptsize \(\square\)} represent non-metabelian groups \(\mathfrak{H}\).
\end{enumerate}

A symbol \(n\ast\) adjacent to a vertex denotes the multiplicity of a batch
of \(n\) immediate descendants sharing a common parent.
The groups of particular importance are labelled by a number in angles,
which is the identifier in the SmallGroups library
\cite{BEO1,BEO2}
of
MAGMA
\cite{MAGMA},
where we omit the orders, which are given on the left hand scale.
The transfer kernel types \(\varkappa_1\), briefly TKT
\cite[Thm.2.5, Tbl.6--7]{Ma2},
in the bottom rectangle concern all
vertices located vertically above.
The first, resp. second, component \(\tau_1(1)\), resp. \(\tau_1(2)\), of the transfer target type (TTT)
\cite[Dfn.3.3, p.288]{Ma6}
in the left rectangle
concerns vertices on the same horizontal level with defect \(k(\mathfrak{G})=0\).
The periodicity with length \(2\) of branches,
\(\mathcal{B}(j)\simeq\mathcal{B}(j+2)\) for \(j\ge 7\),
sets in with branch \(\mathcal{B}(7)\), having a root of order \(3^7\).

The metabelian skeletons were drawn in
\cite[p.189ff]{Ne},
the complete trees were given in
\cite[Fig.4.8, p.76, and Fig.6.1, p.123]{As}.

The actual realization of a vertex as the second \(3\)-class group \(\mathfrak{G}=G_3^2(K)\)
of a complex, resp. real, quadratic number field \(K=\mathbb{Q}(\sqrt{d})\) of type \((3,3)\)
with discriminant \(-10^8<d<10^9\)
is indicated by an underlined boldface signed discriminant (minimal in absolute value)
adjacent to an oval surrounding the vertex.

The realization of mainline vertices
with TKT \(\mathrm{c}.18\) and \(\mathrm{c}.21\)
as \(\mathfrak{G}=G_3^2(K)\),
starting with \(\langle 729,49\rangle\) on the tree \(\mathcal{T}^2(\langle 243,6\rangle)\)
and with \(\langle 729,54\rangle\) on the tree \(\mathcal{T}^2(\langle 243,8\rangle)\),
disproved the \textit{strict} leaf conjecture
(that a second \(3\)-class group must be a leaf of the metabelian skeleton of a coclass graph)
but it is no violation of the \textit{weak} leaf conjecture
\cite[Cnj.3.1, p.423]{Ma4},
since these vertices do not possess metabelian immediate descendants
of the same TKT with higher defect \(k(\mathfrak{G})\) of commutativity.

\newpage


{\tiny

\begin{figure}[ht]
\caption{\(3\)-groups \(G\) represented by vertices of the coclass tree \(\mathcal{T}^2(\langle 243,6\rangle)\)}
\label{fig:TreeOverviewQ}


\setlength{\unitlength}{0.9cm}
\begin{picture}(17,22.5)(-7,-21.5)

\put(-8,0.5){\makebox(0,0)[cb]{order \(3^n\)}}
\put(-8,0){\line(0,-1){20}}
\multiput(-8.1,0)(0,-2){11}{\line(1,0){0.2}}
\put(-8.2,0){\makebox(0,0)[rc]{\(243\)}}
\put(-7.8,0){\makebox(0,0)[lc]{\(3^5\)}}
\put(-8.2,-2){\makebox(0,0)[rc]{\(729\)}}
\put(-7.8,-2){\makebox(0,0)[lc]{\(3^6\)}}
\put(-8.2,-4){\makebox(0,0)[rc]{\(2\,187\)}}
\put(-7.8,-4){\makebox(0,0)[lc]{\(3^7\)}}
\put(-8.2,-6){\makebox(0,0)[rc]{\(6\,561\)}}
\put(-7.8,-6){\makebox(0,0)[lc]{\(3^8\)}}
\put(-8.2,-8){\makebox(0,0)[rc]{\(19\,683\)}}
\put(-7.8,-8){\makebox(0,0)[lc]{\(3^9\)}}
\put(-8.2,-10){\makebox(0,0)[rc]{\(59\,049\)}}
\put(-7.8,-10){\makebox(0,0)[lc]{\(3^{10}\)}}
\put(-8.2,-12){\makebox(0,0)[rc]{\(177\,147\)}}
\put(-7.8,-12){\makebox(0,0)[lc]{\(3^{11}\)}}
\put(-8.2,-14){\makebox(0,0)[rc]{\(531\,441\)}}
\put(-7.8,-14){\makebox(0,0)[lc]{\(3^{12}\)}}
\put(-8.2,-16){\makebox(0,0)[rc]{\(1\,594\,323\)}}
\put(-7.8,-16){\makebox(0,0)[lc]{\(3^{13}\)}}
\put(-8.2,-18){\makebox(0,0)[rc]{\(4\,782\,969\)}}
\put(-7.8,-18){\makebox(0,0)[lc]{\(3^{14}\)}}
\put(-8.2,-20){\makebox(0,0)[rc]{\(14\,348\,907\)}}
\put(-7.8,-20){\makebox(0,0)[lc]{\(3^{15}\)}}
\put(-8,-20){\vector(0,-1){2}}

\put(-6,0.5){\makebox(0,0)[cb]{\(\tau_1(1)=\)}}
\put(-6,0){\makebox(0,0)[cc]{\((21)\)}}
\put(-6,-2){\makebox(0,0)[cc]{\((2^2)\)}}
\put(-6,-4){\makebox(0,0)[cc]{\((32)\)}}
\put(-6,-6){\makebox(0,0)[cc]{\((3^2)\)}}
\put(-6,-8){\makebox(0,0)[cc]{\((43)\)}}
\put(-6,-10){\makebox(0,0)[cc]{\((4^2)\)}}
\put(-6,-12){\makebox(0,0)[cc]{\((54)\)}}
\put(-6,-14){\makebox(0,0)[cc]{\((5^2)\)}}
\put(-6,-16){\makebox(0,0)[cc]{\((65)\)}}
\put(-6,-18){\makebox(0,0)[cc]{\((6^2)\)}}
\put(-6,-20){\makebox(0,0)[cc]{\((76)\)}}
\put(-6,-21){\makebox(0,0)[cc]{\textbf{TTT}}}
\put(-6.5,-21.2){\framebox(1,22){}}

\put(7.6,-7){\vector(0,1){3}}
\put(7.8,-7){\makebox(0,0)[lc]{depth \(3\)}}
\put(7.6,-7){\vector(0,-1){3}}

\put(-3.1,-8){\vector(0,1){2}}
\put(-3.3,-8){\makebox(0,0)[rc]{period length \(2\)}}
\put(-3.1,-8){\vector(0,-1){2}}

\put(0.7,-2){\makebox(0,0)[lc]{bifurcation from}}
\put(0.7,-2.3){\makebox(0,0)[lc]{\(\mathcal{G}(3,2)\) to \(\mathcal{G}(3,3)\)}}

\multiput(0,0)(0,-2){10}{\circle*{0.2}}
\multiput(0,0)(0,-2){9}{\line(0,-1){2}}
\multiput(-1,-2)(0,-2){10}{\circle*{0.2}}
\multiput(-2,-2)(0,-2){10}{\circle*{0.2}}
\multiput(1.95,-4.05)(0,-2){9}{\framebox(0.1,0.1){}}
\multiput(3,-2)(0,-2){10}{\circle*{0.2}}
\multiput(0,0)(0,-2){10}{\line(-1,-2){1}}
\multiput(0,0)(0,-2){10}{\line(-1,-1){2}}
\multiput(0,-2)(0,-2){9}{\line(1,-1){2}}
\multiput(0,0)(0,-2){10}{\line(3,-2){3}}
\multiput(-3.05,-4.05)(-1,0){2}{\framebox(0.1,0.1){}}
\multiput(3.95,-4.05)(0,-2){9}{\framebox(0.1,0.1){}}
\multiput(5,-4)(0,-2){9}{\circle*{0.1}}
\multiput(6,-4)(0,-2){9}{\circle*{0.1}}
\multiput(-1,-2)(-1,0){2}{\line(-1,-1){2}}
\multiput(3,-2)(0,-2){9}{\line(1,-2){1}}
\multiput(3,-2)(0,-2){9}{\line(1,-1){2}}
\multiput(3,-2)(0,-2){9}{\line(3,-2){3}}
\multiput(6.95,-6.05)(0,-2){8}{\framebox(0.1,0.1){}}
\multiput(6,-4)(0,-2){8}{\line(1,-2){1}}

\put(2,-0.5){\makebox(0,0)[lc]{branch}}
\put(2,-0.8){\makebox(0,0)[lc]{\(\mathcal{B}(5)\)}}
\put(2,-2.8){\makebox(0,0)[lc]{\(\mathcal{B}(6)\)}}
\put(2,-4.8){\makebox(0,0)[lc]{\(\mathcal{B}(7)\)}}
\put(2,-6.8){\makebox(0,0)[lc]{\(\mathcal{B}(8)\)}}
\put(2,-8.8){\makebox(0,0)[lc]{\(\mathcal{B}(9)\)}}
\put(2,-10.8){\makebox(0,0)[lc]{\(\mathcal{B}(10)\)}}
\put(2,-12.8){\makebox(0,0)[lc]{\(\mathcal{B}(11)\)}}
\put(2,-14.8){\makebox(0,0)[lc]{\(\mathcal{B}(12)\)}}
\put(2,-16.8){\makebox(0,0)[lc]{\(\mathcal{B}(13)\)}}
\put(2,-18.8){\makebox(0,0)[lc]{\(\mathcal{B}(14)\)}}

\put(-0.1,0.3){\makebox(0,0)[rc]{\(\langle 6\rangle\)}}
\put(-2.1,-1.8){\makebox(0,0)[rc]{\(\langle 50\rangle\)}}
\put(-1.1,-1.8){\makebox(0,0)[rc]{\(\langle 51\rangle\)}}
\put(0.1,-1.8){\makebox(0,0)[lc]{\(\langle 49\rangle\)}}
\put(3.1,-1.8){\makebox(0,0)[lc]{\(\langle 48\rangle\)}}
\put(-4.1,-3.5){\makebox(0,0)[cc]{\(\langle 292\rangle\)}}
\put(-3.1,-3.5){\makebox(0,0)[cc]{\(\langle 293\rangle\)}}
\put(-2.1,-3.3){\makebox(0,0)[cc]{\(\langle 289\rangle\)}}
\put(-2.1,-3.5){\makebox(0,0)[cc]{\(\langle 290\rangle\)}}
\put(-1.1,-3.5){\makebox(0,0)[cc]{\(\langle 288\rangle\)}}
\put(0.1,-3.5){\makebox(0,0)[lc]{\(\langle 285\rangle\)}}
\put(2.2,-3.3){\makebox(0,0)[cc]{\(\langle 284\rangle\)}}
\put(2.2,-3.5){\makebox(0,0)[cc]{\(\langle 291\rangle\)}}
\put(3.2,-3.3){\makebox(0,0)[cc]{\(\langle 286\rangle\)}}
\put(3.2,-3.5){\makebox(0,0)[cc]{\(\langle 287\rangle\)}}
\put(4.2,-3.3){\makebox(0,0)[cc]{\(\langle 276\rangle\)}}
\put(4.2,-3.5){\makebox(0,0)[cc]{\(\langle 283\rangle\)}}
\put(5.2,-3.1){\makebox(0,0)[cc]{\(\langle 280\rangle\)}}
\put(5.2,-3.3){\makebox(0,0)[cc]{\(\langle 281\rangle\)}}
\put(5.2,-3.5){\makebox(0,0)[cc]{\(\langle 282\rangle\)}}
\put(6.2,-3.1){\makebox(0,0)[cc]{\(\langle 277\rangle\)}}
\put(6.2,-3.3){\makebox(0,0)[cc]{\(\langle 278\rangle\)}}
\put(6.2,-3.5){\makebox(0,0)[cc]{\(\langle 279\rangle\)}}

\put(2.1,-3.8){\makebox(0,0)[lc]{\(*2\)}}
\multiput(-2.1,-3.8)(0,-4){5}{\makebox(0,0)[rc]{\(2*\)}}
\multiput(3.1,-3.8)(0,-4){5}{\makebox(0,0)[lc]{\(*2\)}}
\put(4.1,-3.8){\makebox(0,0)[lc]{\(*2\)}}
\multiput(5.1,-5.8)(0,-4){4}{\makebox(0,0)[lc]{\(*2\)}}
\multiput(5.5,-5.3)(0,-4){4}{\makebox(0,0)[lc]{\(\#2\)}}
\multiput(6.1,-5.8)(0,-4){4}{\makebox(0,0)[lc]{\(*2\)}}
\multiput(5.1,-3.8)(0,-4){5}{\makebox(0,0)[lc]{\(*3\)}}
\multiput(6.1,-3.8)(0,-4){5}{\makebox(0,0)[lc]{\(*3\)}}
\multiput(7.1,-5.8)(0,-2){4}{\makebox(0,0)[lc]{\(*3\)}}

\put(-3,-21){\makebox(0,0)[cc]{\textbf{TKT}}}
\put(-2,-21){\makebox(0,0)[cc]{E.14}}
\put(-1,-21){\makebox(0,0)[cc]{E.6}}
\put(0,-21){\makebox(0,0)[cc]{c.18}}
\put(2,-21){\makebox(0,0)[cc]{c.18}}
\put(3.1,-21){\makebox(0,0)[cc]{H.4}}
\put(4,-21){\makebox(0,0)[cc]{H.4}}
\put(5,-21){\makebox(0,0)[cc]{H.4}}
\put(6,-21){\makebox(0,0)[cc]{H.4}}
\put(7,-21){\makebox(0,0)[cc]{H.4}}
\put(-3,-21.5){\makebox(0,0)[cc]{\(\varkappa_1=\)}}
\put(-2,-21.5){\makebox(0,0)[cc]{\((3122)\)}}
\put(-1,-21.5){\makebox(0,0)[cc]{\((1122)\)}}
\put(0,-21.5){\makebox(0,0)[cc]{\((0122)\)}}
\put(2,-21.5){\makebox(0,0)[cc]{\((0122)\)}}
\put(3.1,-21.5){\makebox(0,0)[cc]{\((2122)\)}}
\put(4,-21.5){\makebox(0,0)[cc]{\((2122)\)}}
\put(5,-21.5){\makebox(0,0)[cc]{\((2122)\)}}
\put(6,-21.5){\makebox(0,0)[cc]{\((2122)\)}}
\put(7,-21.5){\makebox(0,0)[cc]{\((2122)\)}}
\put(-3.8,-21.7){\framebox(11.6,1){}}

\put(0,-18){\vector(0,-1){2}}
\put(0.2,-19.4){\makebox(0,0)[lc]{infinite}}
\put(0.2,-19.9){\makebox(0,0)[lc]{mainline}}
\put(1.8,-20.4){\makebox(0,0)[rc]{\(\mathcal{T}^2(\langle 243,6\rangle)\)}}


\multiput(0,-2)(0,-4){3}{\oval(1,1)}
\put(0.1,-1.3){\makebox(0,0)[lc]{\underbar{\textbf{+534\,824}}}}
\put(0.1,-5.3){\makebox(0,0)[lc]{\underbar{\textbf{+13\,714\,789}}}}
\put(0.1,-9.3){\makebox(0,0)[lc]{\underbar{\textbf{+174\,458\,681}}}}

\multiput(-1,-4)(0,-4){4}{\oval(1,1)}
\put(-1,-4.8){\makebox(0,0)[lc]{\underbar{\textbf{-15\,544}}}}
\put(-1,-8.8){\makebox(0,0)[lc]{\underbar{\textbf{-268\,040}}}}
\put(-1,-12.8){\makebox(0,0)[lc]{\underbar{\textbf{-1\,062\,708}}}}
\put(-1,-16.8){\makebox(0,0)[lc]{\underbar{\textbf{-27\,629\,107}}}}
\multiput(-2,-4)(0,-4){4}{\oval(1,1)}
\put(-2,-4.8){\makebox(0,0)[rc]{\underbar{\textbf{-16\,627}}}}
\put(-2,-8.8){\makebox(0,0)[rc]{\underbar{\textbf{-262\,744}}}}
\put(-2,-12.8){\makebox(0,0)[rc]{\underbar{\textbf{-4\,776\,071}}}}
\put(-2,-16.8){\makebox(0,0)[rc]{\underbar{\textbf{-40\,059\,363}}}}
\multiput(6,-6)(0,-4){4}{\oval(1,1)}
\put(6,-6.8){\makebox(0,0)[rc]{\underbar{\textbf{-21\,668}}}}
\put(6,-10.8){\makebox(0,0)[rc]{\underbar{\textbf{-446\,788}}}}
\put(6,-14.8){\makebox(0,0)[rc]{\underbar{\textbf{-3\,843\,907}}}}
\put(6,-18.8){\makebox(0,0)[rc]{\underbar{\textbf{-52\,505\,588}}}}

\end{picture}

\end{figure}

}

\noindent
Figure
\ref{fig:TreeOverviewQ}
visualizes \(3\)-groups which form the crucial objects in section \S\
\ref{s:c18}.

\newpage


{\tiny

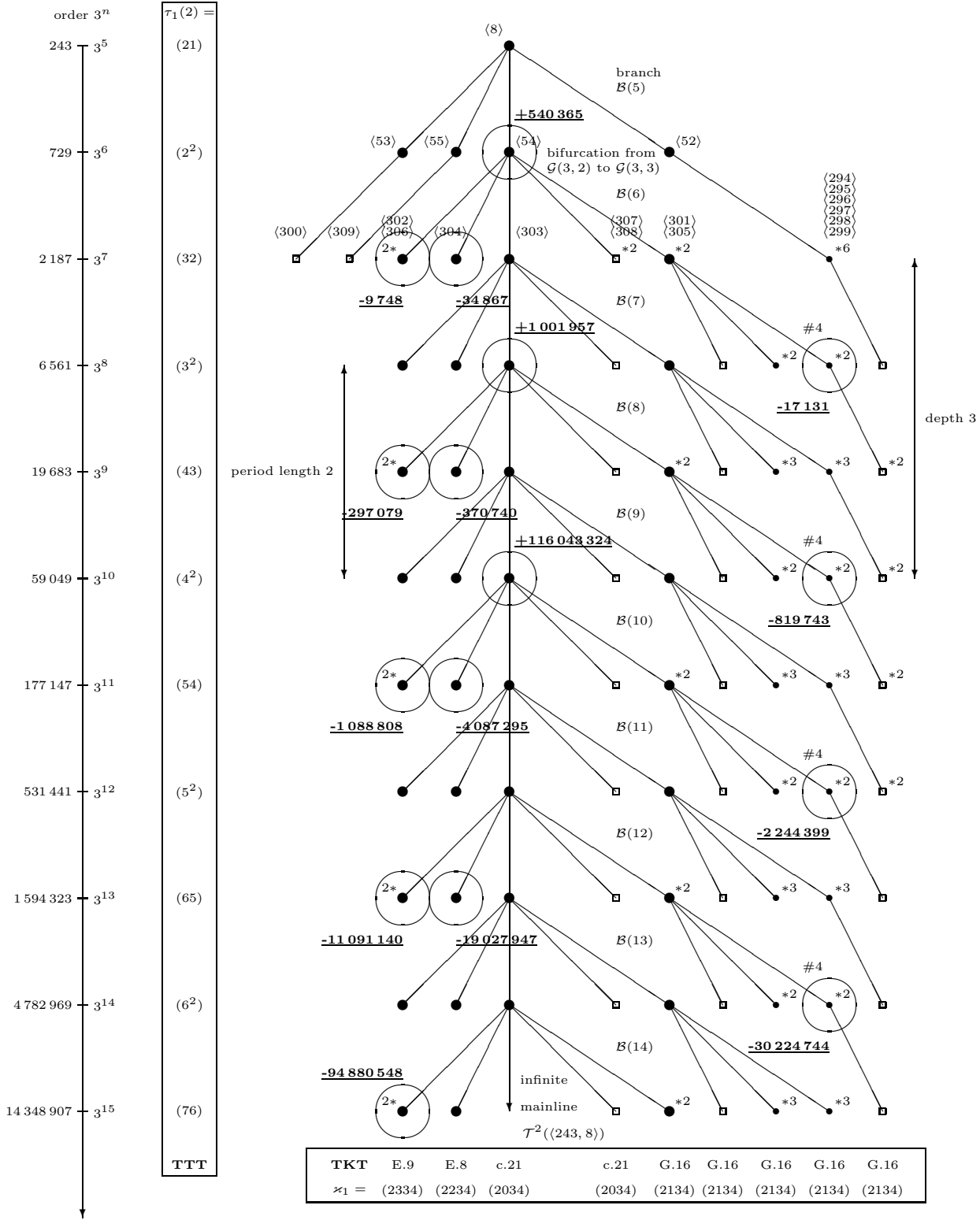
\begin{figure}[ht]
\caption{\(3\)-groups \(G\) represented by vertices of the coclass tree \(\mathcal{T}^2(\langle 243,8\rangle)\)}
\label{fig:TreeOverviewU}


\setlength{\unitlength}{0.9cm}
\begin{picture}(17,22.5)(-7,-21.5)

\put(-8,0.5){\makebox(0,0)[cb]{order \(3^n\)}}
\put(-8,0){\line(0,-1){20}}
\multiput(-8.1,0)(0,-2){11}{\line(1,0){0.2}}
\put(-8.2,0){\makebox(0,0)[rc]{\(243\)}}
\put(-7.8,0){\makebox(0,0)[lc]{\(3^5\)}}
\put(-8.2,-2){\makebox(0,0)[rc]{\(729\)}}
\put(-7.8,-2){\makebox(0,0)[lc]{\(3^6\)}}
\put(-8.2,-4){\makebox(0,0)[rc]{\(2\,187\)}}
\put(-7.8,-4){\makebox(0,0)[lc]{\(3^7\)}}
\put(-8.2,-6){\makebox(0,0)[rc]{\(6\,561\)}}
\put(-7.8,-6){\makebox(0,0)[lc]{\(3^8\)}}
\put(-8.2,-8){\makebox(0,0)[rc]{\(19\,683\)}}
\put(-7.8,-8){\makebox(0,0)[lc]{\(3^9\)}}
\put(-8.2,-10){\makebox(0,0)[rc]{\(59\,049\)}}
\put(-7.8,-10){\makebox(0,0)[lc]{\(3^{10}\)}}
\put(-8.2,-12){\makebox(0,0)[rc]{\(177\,147\)}}
\put(-7.8,-12){\makebox(0,0)[lc]{\(3^{11}\)}}
\put(-8.2,-14){\makebox(0,0)[rc]{\(531\,441\)}}
\put(-7.8,-14){\makebox(0,0)[lc]{\(3^{12}\)}}
\put(-8.2,-16){\makebox(0,0)[rc]{\(1\,594\,323\)}}
\put(-7.8,-16){\makebox(0,0)[lc]{\(3^{13}\)}}
\put(-8.2,-18){\makebox(0,0)[rc]{\(4\,782\,969\)}}
\put(-7.8,-18){\makebox(0,0)[lc]{\(3^{14}\)}}
\put(-8.2,-20){\makebox(0,0)[rc]{\(14\,348\,907\)}}
\put(-7.8,-20){\makebox(0,0)[lc]{\(3^{15}\)}}
\put(-8,-20){\vector(0,-1){2}}

\put(-6,0.5){\makebox(0,0)[cb]{\(\tau_1(2)=\)}}
\put(-6,0){\makebox(0,0)[cc]{\((21)\)}}
\put(-6,-2){\makebox(0,0)[cc]{\((2^2)\)}}
\put(-6,-4){\makebox(0,0)[cc]{\((32)\)}}
\put(-6,-6){\makebox(0,0)[cc]{\((3^2)\)}}
\put(-6,-8){\makebox(0,0)[cc]{\((43)\)}}
\put(-6,-10){\makebox(0,0)[cc]{\((4^2)\)}}
\put(-6,-12){\makebox(0,0)[cc]{\((54)\)}}
\put(-6,-14){\makebox(0,0)[cc]{\((5^2)\)}}
\put(-6,-16){\makebox(0,0)[cc]{\((65)\)}}
\put(-6,-18){\makebox(0,0)[cc]{\((6^2)\)}}
\put(-6,-20){\makebox(0,0)[cc]{\((76)\)}}
\put(-6,-21){\makebox(0,0)[cc]{\textbf{TTT}}}
\put(-6.5,-21.2){\framebox(1,22){}}

\put(7.6,-7){\vector(0,1){3}}
\put(7.8,-7){\makebox(0,0)[lc]{depth \(3\)}}
\put(7.6,-7){\vector(0,-1){3}}

\put(-3.1,-8){\vector(0,1){2}}
\put(-3.3,-8){\makebox(0,0)[rc]{period length \(2\)}}
\put(-3.1,-8){\vector(0,-1){2}}

\put(0.7,-2){\makebox(0,0)[lc]{bifurcation from}}
\put(0.7,-2.3){\makebox(0,0)[lc]{\(\mathcal{G}(3,2)\) to \(\mathcal{G}(3,3)\)}}

\multiput(0,0)(0,-2){10}{\circle*{0.2}}
\multiput(0,0)(0,-2){9}{\line(0,-1){2}}
\multiput(-1,-2)(0,-2){10}{\circle*{0.2}}
\multiput(-2,-2)(0,-2){10}{\circle*{0.2}}
\multiput(1.95,-4.05)(0,-2){9}{\framebox(0.1,0.1){}}
\multiput(3,-2)(0,-2){10}{\circle*{0.2}}
\multiput(0,0)(0,-2){10}{\line(-1,-2){1}}
\multiput(0,0)(0,-2){10}{\line(-1,-1){2}}
\multiput(0,-2)(0,-2){9}{\line(1,-1){2}}
\multiput(0,0)(0,-2){10}{\line(3,-2){3}}
\multiput(-3.05,-4.05)(-1,0){2}{\framebox(0.1,0.1){}}
\multiput(3.95,-6.05)(0,-2){8}{\framebox(0.1,0.1){}}
\multiput(5,-6)(0,-2){8}{\circle*{0.1}}
\multiput(6,-4)(0,-2){9}{\circle*{0.1}}
\multiput(-1,-2)(-1,0){2}{\line(-1,-1){2}}
\multiput(3,-4)(0,-2){8}{\line(1,-2){1}}
\multiput(3,-4)(0,-2){8}{\line(1,-1){2}}
\multiput(3,-2)(0,-2){9}{\line(3,-2){3}}
\multiput(6.95,-6.05)(0,-2){8}{\framebox(0.1,0.1){}}
\multiput(6,-4)(0,-2){8}{\line(1,-2){1}}

\put(2,-0.5){\makebox(0,0)[lc]{branch}}
\put(2,-0.8){\makebox(0,0)[lc]{\(\mathcal{B}(5)\)}}
\put(2,-2.8){\makebox(0,0)[lc]{\(\mathcal{B}(6)\)}}
\put(2,-4.8){\makebox(0,0)[lc]{\(\mathcal{B}(7)\)}}
\put(2,-6.8){\makebox(0,0)[lc]{\(\mathcal{B}(8)\)}}
\put(2,-8.8){\makebox(0,0)[lc]{\(\mathcal{B}(9)\)}}
\put(2,-10.8){\makebox(0,0)[lc]{\(\mathcal{B}(10)\)}}
\put(2,-12.8){\makebox(0,0)[lc]{\(\mathcal{B}(11)\)}}
\put(2,-14.8){\makebox(0,0)[lc]{\(\mathcal{B}(12)\)}}
\put(2,-16.8){\makebox(0,0)[lc]{\(\mathcal{B}(13)\)}}
\put(2,-18.8){\makebox(0,0)[lc]{\(\mathcal{B}(14)\)}}

\put(-0.1,0.3){\makebox(0,0)[rc]{\(\langle 8\rangle\)}}
\put(-2.1,-1.8){\makebox(0,0)[rc]{\(\langle 53\rangle\)}}
\put(-1.1,-1.8){\makebox(0,0)[rc]{\(\langle 55\rangle\)}}
\put(0.1,-1.8){\makebox(0,0)[lc]{\(\langle 54\rangle\)}}
\put(3.1,-1.8){\makebox(0,0)[lc]{\(\langle 52\rangle\)}}
\put(-4.1,-3.5){\makebox(0,0)[cc]{\(\langle 300\rangle\)}}
\put(-3.1,-3.5){\makebox(0,0)[cc]{\(\langle 309\rangle\)}}
\put(-2.1,-3.3){\makebox(0,0)[cc]{\(\langle 302\rangle\)}}
\put(-2.1,-3.5){\makebox(0,0)[cc]{\(\langle 306\rangle\)}}
\put(-1.1,-3.5){\makebox(0,0)[cc]{\(\langle 304\rangle\)}}
\put(0.1,-3.5){\makebox(0,0)[lc]{\(\langle 303\rangle\)}}
\put(2.2,-3.3){\makebox(0,0)[cc]{\(\langle 307\rangle\)}}
\put(2.2,-3.5){\makebox(0,0)[cc]{\(\langle 308\rangle\)}}
\put(3.2,-3.3){\makebox(0,0)[cc]{\(\langle 301\rangle\)}}
\put(3.2,-3.5){\makebox(0,0)[cc]{\(\langle 305\rangle\)}}
\put(6.2,-2.5){\makebox(0,0)[cc]{\(\langle 294\rangle\)}}
\put(6.2,-2.7){\makebox(0,0)[cc]{\(\langle 295\rangle\)}}
\put(6.2,-2.9){\makebox(0,0)[cc]{\(\langle 296\rangle\)}}
\put(6.2,-3.1){\makebox(0,0)[cc]{\(\langle 297\rangle\)}}
\put(6.2,-3.3){\makebox(0,0)[cc]{\(\langle 298\rangle\)}}
\put(6.2,-3.5){\makebox(0,0)[cc]{\(\langle 299\rangle\)}}

\put(2.1,-3.8){\makebox(0,0)[lc]{\(*2\)}}
\multiput(-2.1,-3.8)(0,-4){5}{\makebox(0,0)[rc]{\(2*\)}}
\multiput(3.1,-3.8)(0,-4){5}{\makebox(0,0)[lc]{\(*2\)}}
\put(6.1,-3.8){\makebox(0,0)[lc]{\(*6\)}}
\multiput(5.1,-5.8)(0,-4){4}{\makebox(0,0)[lc]{\(*2\)}}
\multiput(5.5,-5.3)(0,-4){4}{\makebox(0,0)[lc]{\(\#4\)}}
\multiput(6.1,-5.8)(0,-4){4}{\makebox(0,0)[lc]{\(*2\)}}
\multiput(5.1,-7.8)(0,-4){4}{\makebox(0,0)[lc]{\(*3\)}}
\multiput(6.1,-7.8)(0,-4){4}{\makebox(0,0)[lc]{\(*3\)}}
\multiput(7.1,-7.8)(0,-2){4}{\makebox(0,0)[lc]{\(*2\)}}

\put(-3,-21){\makebox(0,0)[cc]{\textbf{TKT}}}
\put(-2,-21){\makebox(0,0)[cc]{E.9}}
\put(-1,-21){\makebox(0,0)[cc]{E.8}}
\put(0,-21){\makebox(0,0)[cc]{c.21}}
\put(2,-21){\makebox(0,0)[cc]{c.21}}
\put(3.1,-21){\makebox(0,0)[cc]{G.16}}
\put(4,-21){\makebox(0,0)[cc]{G.16}}
\put(5,-21){\makebox(0,0)[cc]{G.16}}
\put(6,-21){\makebox(0,0)[cc]{G.16}}
\put(7,-21){\makebox(0,0)[cc]{G.16}}
\put(-3,-21.5){\makebox(0,0)[cc]{\(\varkappa_1=\)}}
\put(-2,-21.5){\makebox(0,0)[cc]{\((2334)\)}}
\put(-1,-21.5){\makebox(0,0)[cc]{\((2234)\)}}
\put(0,-21.5){\makebox(0,0)[cc]{\((2034)\)}}
\put(2,-21.5){\makebox(0,0)[cc]{\((2034)\)}}
\put(3.1,-21.5){\makebox(0,0)[cc]{\((2134)\)}}
\put(4,-21.5){\makebox(0,0)[cc]{\((2134)\)}}
\put(5,-21.5){\makebox(0,0)[cc]{\((2134)\)}}
\put(6,-21.5){\makebox(0,0)[cc]{\((2134)\)}}
\put(7,-21.5){\makebox(0,0)[cc]{\((2134)\)}}
\put(-3.8,-21.7){\framebox(11.6,1){}}

\put(0,-18){\vector(0,-1){2}}
\put(0.2,-19.4){\makebox(0,0)[lc]{infinite}}
\put(0.2,-19.9){\makebox(0,0)[lc]{mainline}}
\put(1.8,-20.4){\makebox(0,0)[rc]{\(\mathcal{T}^2(\langle 243,8\rangle)\)}}


\multiput(0,-2)(0,-4){3}{\oval(1,1)}
\put(0.1,-1.3){\makebox(0,0)[lc]{\underbar{\textbf{+540\,365}}}}
\put(0.1,-5.3){\makebox(0,0)[lc]{\underbar{\textbf{+1\,001\,957}}}}
\put(0.1,-9.3){\makebox(0,0)[lc]{\underbar{\textbf{+116\,043\,324}}}}

\multiput(-1,-4)(0,-4){4}{\oval(1,1)}
\put(-1,-4.8){\makebox(0,0)[lc]{\underbar{\textbf{-34\,867}}}}
\put(-1,-8.8){\makebox(0,0)[lc]{\underbar{\textbf{-370\,740}}}}
\put(-1,-12.8){\makebox(0,0)[lc]{\underbar{\textbf{-4\,087\,295}}}}
\put(-1,-16.8){\makebox(0,0)[lc]{\underbar{\textbf{-19\,027\,947}}}}
\multiput(-2,-4)(0,-4){5}{\oval(1,1)}
\put(-2,-4.8){\makebox(0,0)[rc]{\underbar{\textbf{-9\,748}}}}
\put(-2,-8.8){\makebox(0,0)[rc]{\underbar{\textbf{-297\,079}}}}
\put(-2,-12.8){\makebox(0,0)[rc]{\underbar{\textbf{-1\,088\,808}}}}
\put(-2,-16.8){\makebox(0,0)[rc]{\underbar{\textbf{-11\,091\,140}}}}
\put(-2,-19.3){\makebox(0,0)[rc]{\underbar{\textbf{-94\,880\,548}}}}
\multiput(6,-6)(0,-4){4}{\oval(1,1)}
\put(6,-6.8){\makebox(0,0)[rc]{\underbar{\textbf{-17\,131}}}}
\put(6,-10.8){\makebox(0,0)[rc]{\underbar{\textbf{-819\,743}}}}
\put(6,-14.8){\makebox(0,0)[rc]{\underbar{\textbf{-2\,244\,399}}}}
\put(6,-18.8){\makebox(0,0)[rc]{\underbar{\textbf{-30\,224\,744}}}}

\end{picture}

\end{figure}

}

\noindent
Figure
\ref{fig:TreeOverviewU}
visualizes \(3\)-groups which form the crucial objects in sections \S\
\ref{s:c21}.

\newpage


\section{Pattern Recognition via Artin Transfers}
\label{s:ArtinPattern}

Let \(p\) be a fixed prime number
and \(k\) be a number field with \(p\)-class group \(\mathrm{Cl}_p(k)\)
of order \(p^v\), where \(v\ge 0\) denotes a non-negative integer.
According to the Artin reciprocity law of class field theory
\cite{Ar1},
\(\mathrm{Cl}_p(k)\) is isomorphic to the Galois group
\(\mathrm{G}_p^1(k):=\mathrm{Gal}(\mathrm{F}_p^1(k)\mid k)\)
of the maximal abelian unramified \(p\)-extension,
that is the (first) Hilbert \(p\)-class field \(\mathrm{F}_p^1(k)\), of \(k\).

In section \S\
\ref{s:Intro},
we defined the \(p\)-class tower group (briefly \(p\)-\textit{tower group}) of \(k\)
as the Galois group \(G=\mathrm{G}_p^\infty(k):=\mathrm{Gal}(\mathrm{F}_p^\infty(k)\mid k)\)
of the maximal unramified pro-\(p\) extension \(\mathrm{F}_p^\infty(k)\) of \(k\).
The fixed field of the commutator subgroup \(G^\prime\)
(the minimal subgroup with abelian quotient) in \(\mathrm{F}_p^\infty(k)\)
is the maximal abelian unramified \(p\)-extension \(\mathrm{F}_p^1(k)\) of \(k\)
with Galois group
\[\mathrm{Gal}(\mathrm{F}_p^1(k)\mid k)\simeq
\mathrm{Gal}(\mathrm{F}_p^\infty(k)\mid k)/\mathrm{Gal}(\mathrm{F}_p^\infty(k)\mid\mathrm{F}_p^1(k))=G/G^\prime.\]
Thus, the \(p\)-class group \(\mathrm{Cl}_p(k)\) of \(k\) is also isomorphic to
the abelianization \(G/G^\prime\) of the \(p\)-tower group \(G\) of \(k\).
Consequently, the derived subgroup \(G^\prime\) is a closed (and open) subgroup of finite index
\((G:G^\prime)=p^v\) in the topological pro-\(p\) group \(G\),
and there exist \(v+1\) layers
\[\lbrace G^\prime\le H\le G\mid (G:H)=p^n\rbrace,\text{ for }0\le n\le v,\]
of intermediate normal subgroups \(H\unlhd G\)
between \(G\) and its commutator subgroup \(G^\prime\).
For each of them, we denote by
\(T_{G,H}:G\to H/H^\prime\)
the Artin transfer homomorphism from \(G\) to \(H\)
\cite{Ar2}.



\begin{definition}
\label{dfn:LayerGT}
For each integer \(0\le n\le v\),
the system

\begin{equation}
\label{eqn:LayerGT}
\mathrm{Lyr}_n(G):=\lbrace G^\prime\le H\le G\mid (G:H)=p^n\rbrace
\end{equation}

\noindent
of intermediate groups \(G^\prime\le H\le G\)
with index \((G:H)=p^n\)
is called the \(n\)-\textit{th layer} of normal subgroups of \(G\) with abelian quotients \(G/H\).
In particular,
for \(n=0\), \(G\) forms the \textit{top layer}
\(\mathrm{Lyr}_0(G)=\lbrace G\rbrace\),
and for \(n=v\), \(G^\prime\) constitutes the \textit{bottom layer}
\(\mathrm{Lyr}_v(K)=\lbrace G^\prime\rbrace\).
\end{definition}

Since the abelianization \(H^{\mathrm{ab}}=H/H^\prime\) forms the target of the
Artin transfer homomorphism \(T_{G,H}:\,G\to H/H^\prime\) from \(G\) to \(H\),
we introduced a preliminary instance of the following terminology in
\cite[Dfn.1.1, p.403]{Ma4}.



\begin{definition}
\label{dfn:TTTandTKT}
For each integer \(0\le n\le v\),
the family

\begin{equation}
\label{eqn:TTT}
\tau_n(G):=(H/H^\prime)_{H\in\mathrm{Lyr}_n(G)}
\end{equation}

\noindent
of abelianizations
is called the \(n\)-th layer of the multi-layered \textit{transfer target type} (TTT)
of \(G\),

\begin{equation}
\label{eqn:TTTGT}
\tau(G):=\lbrack\tau_0(G);\ldots;\tau_v(G)\rbrack.
\end{equation}

\noindent
For each integer \(0\le n\le v\),
the family

\begin{equation}
\label{eqn:TKT}
\varkappa_n(G):=(\ker(T_{G,H}))_{H\in\mathrm{Lyr}_n(G)}
\end{equation}

\noindent
of transfer kernels
is called the \(n\)-th layer of the multi-layered \textit{transfer kernel type} (TKT)
of \(G\),

\begin{equation}
\label{eqn:TKTGT}
\varkappa(G):=\lbrack\varkappa_0(G);\ldots;\varkappa_v(G)\rbrack.
\end{equation}

\end{definition}

\begin{definition}
\label{dfn:ArtinPattern}
By the \textit{Artin pattern} of \(G\) we understand the pair

\begin{equation}
\label{eqn:ArtinPattern}
\mathrm{AP}(G):=(\tau(G);\varkappa(G))
\end{equation}

\noindent
consisting of the multi-layered TTT \(\tau(G)\)
and the multi-layered TKT \(\varkappa(G)\)
of \(G\).

\end{definition}



\begin{remark}
\label{rmk:TTT}
Suppose that \(0<n<v\).
If an ordering is defined for the elements of \(\mathrm{Lyr}_n(G)\),
then the same ordering is applied to the members of the layer \(\tau_n(G)\)
and the TTT layer is called \textit{ordered}.
Otherwise, the TTT layer is called \textit{unordered} or \textit{accumulated},
since equal components are collected in powers with formal exponents denoting iteration.
The same considerations apply to the TKT.
\end{remark}

Since it is increasingly difficult to compute the structure of the \(p\)-class groups \(\mathrm{Cl}_p(L)\)
of extension fields \(L\) with \(\lbrack L:k\rbrack=p^n\) in higher layers with \(n\ge 2\),
it is frequently sufficient to make use of information in the first layer only,
that is the layer of subgroups with index \(p\).
Therefore, Boston, Bush and Hajir
\cite{BBH}
invented the following \textit{first order approximation} of the TTT,
and we add a supplementary notion for the TKT.



\begin{definition}
\label{dfn:IPAD}
The restriction

\begin{equation}
\label{eqn:IPAD1}
\begin{aligned}
\tau^{(1)}(G) &:=\lbrack\tau_0(G);\tau_1(G)\rbrack,\text{ resp.}\\
\varkappa^{(1)}(G) &:=\lbrack\varkappa_0(G);\varkappa_1(G)\rbrack,
\end{aligned}
\end{equation}

\noindent
of the TTT \(\tau(G)\), resp. TKT \(\varkappa(G)\), to the zeroth and first layer
is called the \textit{index-\(p\) abelianization data} (IPAD),
resp. \textit{index-\(p\) obstruction data} (IPOD), of \(G\),
since the kernel is an obstruction for the injectivity of the
induced Artin transfer homomorphism \(\tilde{T}_{G,H}:\,G/G^\prime\to H/H^\prime\),
and thus also for an embedding of the \(p\)-class group of the base field \(K\)
into the \(p\)-class group of an extension \(L\).
\end{definition}

So, the complete TTT is an extension of the IPAD.
However, there also exists another extension of the IPAD
which is not covered by the TTT.
It is constructed from the usual IPAD \(\lbrack\tau_0(G);\tau_1(G)\rbrack\) of \(G\),
firstly, by observing that \(\tau_1(G)=(H/H^\prime)_{H\in\mathrm{Lyr}_1(G)}\)
can be viewed as \(\tau_1(G)=(\tau_0(H))_{H\in\mathrm{Lyr}_1(G)}\) and,
secondly, by extending each \(\tau_0(H)\) to
the IPAD \(\lbrack\tau_0(H);\tau_1(H)\rbrack\) of \(H\).
The same ideas can be applied to the TKT and IPOD.



\begin{definition}
\label{dfn:IteratedIPAD}
The family

\begin{equation}
\label{eqn:IPAD2}
\begin{aligned}
\tau^{(2)}(G) &:=\lbrack\tau_0(G);(\lbrack\tau_0(H);\tau_1(H)\rbrack)_{H\in\mathrm{Lyr}_1(G)}\rbrack,\text{ resp.}\\
\varkappa^{(2)}(G) &:=\lbrack\varkappa_0(G);(\lbrack\varkappa_0(H);\varkappa_1(H)\rbrack)_{H\in\mathrm{Lyr}_1(G)}\rbrack,
\end{aligned}
\end{equation}

\noindent
is called the \textit{iterated} IPAD, resp. IPOD, \textit{of second order} of \(G\).
\end{definition}



\section{Relation rank of the \(p\)-class tower group}
\label{s:RelRank}

\noindent
There is a fatal misprint in both,
the Russian original of the Shafarevich paper
\cite{Sh}
of 1964,
and in the English translation of 1966,
where the Dirichlet unit rank \(r\) is missing
in formula (\(18^\prime\)).
Therefore, we prove the following correction.

\begin{theorem}
\label{thm:Shafarevich}
(Corrected version of Theorem 6 by I.R. Shafarevich \cite{Sh})\\
Let \(p\) be a prime number and
denote by \(\zeta\) a primitive \(p\)th root of unity. 
Let \(k\) be a number field with signature \((r_1,r_2)\)
and torsionfree Dirichlet unit rank \(r=r_1+r_2-1\),
and let \(S\) be a finite set of non-archimedean or real archimedean places of \(k\).
Assume that no place in \(S\) divides \(p\).\\
Then the relation rank
\(d_2(G_S)=\dim_{\mathbb{F}_p}\mathrm{H}^2(G_S,\mathbb{F}_p)\)
of the Galois group \(G_S=\mathrm{Gal}(k_S\vert k)\)
of the maximal pro-\(\ell\) extension \(k_S\) of \(k\)
which is unramified outside of \(S\)
is bounded from above by 

\begin{equation}
\label{eqn:Shafarevich}
d_2(G_S)\le
\begin{cases}
d_1(G_S)+r   & \text{ if } S\ne\emptyset \text{ or }  \zeta\notin k,\\
d_1(G_S)+r+1 & \text{ if } S=\emptyset   \text{ and } \zeta\in k,
\end{cases}
\end{equation}

\noindent
where
\(d_1(G_S)=\dim_{\mathbb{F}_p}\mathrm{H}^1(G_S,\mathbb{F}_p)\)
denotes the generator rank of \(G_S\).
\end{theorem}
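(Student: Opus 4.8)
\emph{Sketch of the intended proof.} The plan is to compute the generator rank and the relation rank as \(\mathbb{F}_p\)-dimensions of Galois cohomology, \(d_i(G_S)=\dim_{\mathbb F_p}H^i(G_S,\mathbb F_p)\) for \(i\in\{1,2\}\), and to bound the second by the classical combination of global duality, Kummer theory, and Dirichlet's \(S\)-unit theorem used by Shafarevich \cite{Sh} and later systematised by Koch and by Neukirch--Schmidt--Wingberg. The torsion-free unit rank \(r\) — the quantity dropped from formula \((18')\) — will be produced precisely by the \(S\)-unit theorem, and the whole argument is controlled by the hypothesis that no place of \(S\) divides \(p\), which forces the ramification of \(k_S\vert k\) to be everywhere tame for \(p\).

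First I would pass from \(G_S\) to the full (not necessarily pro-\(p\)) Galois group \(\Gamma_S\) of the maximal extension of \(k\) unramified outside \(S\): as \(G_S\) is the maximal pro-\(p\) quotient of \(\Gamma_S\), the Lyndon--Hochschild--Serre spectral sequence gives \(H^1(G_S,\mathbb F_p)\cong H^1(\Gamma_S,\mathbb F_p)\) and an injection \(H^2(G_S,\mathbb F_p)\hookrightarrow H^2(\Gamma_S,\mathbb F_p)\), so it is enough to bound \(\dim_{\mathbb F_p}H^2(\Gamma_S,\mathbb F_p)-\dim_{\mathbb F_p}H^1(\Gamma_S,\mathbb F_p)\). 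Next I would apply global duality for the module \(\mathbb F_p\), whose Cartier dual is \(\mu_p\); in the restricted-ramification setting (run, when \(\zeta\notin k\), over the cyclotomic layer \(k(\zeta)\) and descended by the prime-to-\(p\) group \(\mathrm{Gal}(k(\zeta)\vert k)\), and with the unramified local conditions at the places above \(p\) lying outside \(S\)) the relevant Poitou--Tate tail has the shape
\begin{equation*}
H^1(\Gamma_S,\mu_p)^{\vee}\longrightarrow H^2(\Gamma_S,\mathbb F_p)\longrightarrow\bigoplus_{v\in S}H^2(k_v,\mathbb F_p)\longrightarrow\mu_p(k)^{\vee}\longrightarrow 0,
\end{equation*}
giving \(d_2(G_S)\le\dim_{\mathbb F_p}H^1(\Gamma_S,\mu_p)+\sum_{v\in S}\dim_{\mathbb F_p}H^2(k_v,\mathbb F_p)-\delta\) with \(\delta:=\dim_{\mathbb F_p}\mu_p(k)\in\{0,1\}\), equal to \(1\) exactly when \(\zeta\in k\). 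Tameness now keeps the local terms cheap: by local duality \(\dim_{\mathbb F_p}H^2(k_v,\mathbb F_p)=\dim_{\mathbb F_p}\mu_p(k_v)\le 1\) for a finite \(v\in S\) (because \(v\nmid p\)), while for a real \(v\in S\) the term is \(0\) if \(p\) is odd and \(1\) if \(p=2\).

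Then I would evaluate the global Kummer term. By Kummer theory \(H^1(\Gamma_S,\mu_p)\) is the group \(V_S\) of classes \(a(k^\times)^p\) for which \(k(\sqrt[p]{a})\vert k\) is unramified outside \(S\); this elementary abelian \(p\)-group sits in a short exact sequence linking it to \(\mathcal O_{k,S}^\times/(\mathcal O_{k,S}^\times)^p\) and to \({}_p\mathrm{Cl}_S(k)\), and Dirichlet's \(S\)-unit theorem supplies \(\dim_{\mathbb F_p}\mathcal O_{k,S}^\times/(\mathcal O_{k,S}^\times)^p=r+\#S_{\mathrm f}+\delta\), where \(S_{\mathrm f}\) is the finite part of \(S\). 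In parallel, class field theory expresses \(d_1(G_S)=\dim_{\mathbb F_p}H^1(\Gamma_S,\mathbb F_p)\) through the same \(p\)-class-group datum, augmented by one tame cyclic generator for each finite place of \(S\) (and, when \(p=2\), one for each real place of \(S\)). Substituting both evaluations into the inequality above, the \(S\)-class-group contribution, the \(\#S_{\mathrm f}\) coming from the \(S\)-unit rank, and the local summands \(\sum_{v\in S}\dim_{\mathbb F_p}H^2(k_v,\mathbb F_p)\) are matched term by term against their counterparts inside \(d_1(G_S)\), leaving \(d_2(G_S)\le d_1(G_S)+r+\delta-\delta'\) with \(\delta'=\delta\) as soon as \(S\neq\emptyset\) and \(\delta'=0\) for \(S=\emptyset\). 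This is the claimed dichotomy, the surplus unit surviving only in the corner \(S=\emptyset\), \(\zeta\in k\).

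The hard part is this last reconciliation. To make the cancellation airtight one must handle uniformly the ordinary versus narrow or ray \(p\)-class groups (forced by the real places of \(S\)), the primes \(v\mid p\) lying \emph{outside} \(S\) — whose unramified local condition both cuts \(V_S\) down and must be checked not to spoil the bound — and the descent through \(k(\zeta)\) when \(\zeta\notin k\), since \(\mu_p\) is a \(\Gamma_S\)-module only in the case \(\zeta\in k\). Getting the surplus to be \(+1\) in, and only in, the single case \(S=\emptyset\) and \(\zeta\in k\) — and not, as a hasty count might suggest, either always or never — is the delicate point, and it is exactly the error hidden in the misprinted formula \((18')\) that this theorem corrects.
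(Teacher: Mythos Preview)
Your sketch outlines the modern cohomological route (Poitou--Tate duality, Kummer theory, Dirichlet's \(S\)-unit theorem) in the style of Koch and Neukirch--Schmidt--Wingberg, and with enough care at the places above \(p\) not in \(S\) and in the descent from \(k(\zeta)\) it can be made to work. But it is a genuinely different and far more elaborate approach than the paper's.

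The paper does not redo any of this analysis. It observes that the misprint affects only the final combination, not the ingredients: Shafarevich's own Theorem~1, giving \(d_1(G_S)=t(S)+\lambda(S)+\sigma(S)-r-\delta\), and the second part of his Theorem~5, giving \(d_2(G_S)\le\sigma(S)\) when \(S=\emptyset\) and \(\zeta\in k\), are both correct as stated. The paper therefore treats only the case \(S=\emptyset\), \(\zeta\in k\) (where \(t(S)=\lambda(S)=0\) and \(\delta=1\)), subtracts the two cited formulas, and obtains \(d_2(G_S)-d_1(G_S)\le\sigma(S)-(\sigma(S)-r-1)=r+1\) in three lines. What your approach buys is self-containment and a uniform treatment of all cases; what the paper's buys is brevity, since it leverages the fact that Shafarevich's intermediate results were never in doubt---only the final formula \((18')\) dropped the \(r\).
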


\begin{proof}
It suffices to prove the corrected statement
for the case \(S=\emptyset\) and \(\zeta\in k\).\\
Let \(J\), resp. \(U_S\), be
the group of id\`eles, resp. the subgroup of \(S\)-unit id\`eles, of \(k\),
and put
\(\sigma(S)=\dim_{\mathbb{F}_p}V_S/(k^\times)^p\),
where \(V_S=U_SJ^p\cap k^\times\).
We start with the second part of
\cite[\S\ 4, Thm.5, p.137]{Sh}
which states that the relation rank of \(G_S\) is bounded from above by
\(d_2(G_S)\le\sigma(S)\)
if \(S=\emptyset\) and \(\zeta\in k\).

According to
\cite[\S\ 1, Thm.1, p.130]{Sh},
the generator rank of \(G_S\) is given by
\(d_1(G_S)=t(S)+\lambda(S)+\sigma(S)-r-\delta\),
where
\(t(S)=\#\lbrace v\in S\mid\zeta\in k_v\rbrace\),
\(\lambda(S)=\sum_{v\in S,v\vert p}\,\lbrack k_v:\mathbb{Q}_v\rbrack\),
\(\delta=0\) if \(\zeta\notin k\) and
\(\delta=1\) if \(\zeta\in k\).
Our assumptions \(S=\emptyset\) and \(\zeta\in k\) imply that
\(t(S)=0\), \(\lambda(S)=0\) and \(\delta=1\).

Together, we obtain
\(d_2(G_S)-d_1(G_S)\le\sigma(S)-(t(S)+\lambda(S)+\sigma(S)-r-\delta)
=-t(S)-\lambda(S)+r+\delta
=-0-0+r+1
=r+1\),
that is
\(d_2(G_S)\le d_1(G_S)+r+1\).
\end{proof}

\begin{example}
\label{exm:Shafarevich}

The first examples of fields, where a violation of the Shafarevich Theorem
in the misprinted version occurred, have been found by Azizi, Zekhnini and Taous
\cite{AZT}:
The torsionfree unit rank of any bicyclic biquadratic field
\(k=\mathbb{Q}(\sqrt{-1},\sqrt{d})\) (\(d>1\) squarefree)
with signature \((r_1,r_2)=(0,2)\) is \(r=r_1+r_2-1=1\).
The particular fields with radicand \(d=p_1p_2q\),
where \(p_1\equiv 1\pmod{8}\), \(p_2\equiv 5\pmod{8}\) and
\(q\equiv 3\pmod{4}\) are prime numbers such that
\(\left(\frac{p_1}{p_2}\right)=-1\), \(\left(\frac{p_1}{q}\right)=-1\) and \(\left(\frac{p_2}{q}\right)=-1\),
have a \(2\)-class group \(\mathrm{Cl}_2(k)\) of type \((2,2,2)\),
a \(2\)-class tower of length \(\ell_2(k)=2\)
\cite{AZT},
and one of the \(2\)-tower groups
\(\langle 32,35\rangle\), \(\langle 64,181\rangle\), \(\langle 128,984\rangle\), etc.
\cite[Fig.2, p.752]{Ma7},
which all have \(p\)-multiplicator rank \(\mu=5\)
\cite{MAGMA}.
Since the relation rank
\(d_2(G)=\dim_{\mathbb{F}_p}\mathrm{H}^2(G,\mathbb{F}_p)\)
of a finite \(p\)-group \(G\) is bounded from below
by its \(p\)-multiplicator rank \(\mu(G)\)
\cite[\S\ 14, p.178]{Ma5},
our corrected formula admits the conclusion that the relation rank \(d_2(G)\ge\mu(G)=5\)
of the \(2\)-tower groups
\(\langle 32,35\rangle\), \(\langle 64,181\rangle\), \(\langle 128,984\rangle\), etc.
is exactly equal to \(d_1(G)+r+1=3+1+1=5\),
whereas the misprinted formula yields the contradiction \(d_2(G)\le d_1(G)+1=3+1=4\).
In this case, the two inequalities \(\mu(G)\le d_2(G)\le d_1(G)+r+1\)
degenerate to consecutive equations \(\mu(G)=d_2(G)=d_1(G)+r+1\).

\end{example}



\bigskip
Slightly different from our definitions in
\cite[Dfn.21.2, p.187]{Ma5}
and
\cite[Dfn.6.1, p.301]{Ma6},
we now redefine the concept of a cover.

\begin{definition}
\label{dfn:Cover}
Let \(\mathfrak{G}\) be a finite \textit{metabelian} \(p\)-group.

\begin{enumerate}

\item
By the \textit{cover} \(\mathrm{cov}(\mathfrak{G})\) of \(\mathfrak{G}\)
we understand the set of all (isomorphism classes of) finite \(p\)-groups \(H\)
whose second derived quotient \(H/H^{\prime\prime}\) is isomorphic to \(\mathfrak{G}\):

\begin{equation}
\label{eqn:Cover}
\mathrm{cov}(\mathfrak{G}):=\lbrace H\mid \lvert H\rvert<\infty,\ H/H^{\prime\prime}\simeq\mathfrak{G}\rbrace.
\end{equation}

\item
For any nonnegative integer \(n\ge 0\),
let the \textit{\(n\)-cover} of \(\mathfrak{G}\) be defined by

\begin{equation}
\label{eqn:nCover}
\mathrm{cov}_n(\mathfrak{G}):=\lbrace H\in\mathrm{cov}(\mathfrak{G})\mid 0\le d_2(H)-d_1(H)\le n\rbrace.
\end{equation}

\end{enumerate}

\end{definition}

\begin{remark}
\label{rmk:Cover}

\begin{enumerate}

\item
In contrast to our earlier definitions,
the cover \(\mathrm{cov}(\mathfrak{G})\) in the new sense cannot be empty,
since it certainly contains (the isomorphism class of) the group \(\mathfrak{G}\) itself
as the unique metabelian element.

\item
By the Burnside basis theorem, the defining conditions
\(0\le d_2(H)-d_1(H)\le n\)
for the \(n\)-cover of \(\mathfrak{G}\) could also be replaced by
\(d_1(\mathfrak{G})\le d_2(H)\le d_1(\mathfrak{G})+n\),
since \(d_1(H)=d_1(\mathfrak{G})\).

\end{enumerate}

\end{remark}

\begin{definition}
\label{dfn:ShafarevichCover}

Let \(k\) be a number field with \(p\)-class rank \(\rho\),
torsionfree Dirichlet unit rank \(r\), and
second \(p\)-class group \(\mathfrak{G}=\mathrm{G}_p^2(k)\).
By the \textit{Shafarevich cover} \(\mathrm{cov}_k(\mathfrak{G})\) of \(\mathfrak{G}\)
with respect to \(k\) (or to a class of similar number fields)
we understand the \(n\)-cover of \(\mathfrak{G}\) with
\(n=r+1\) if \(k\) contains a primitive \(p\)th root of unity, and \(n=r\) otherwise.
The Shafarevich cover can also be defined directly by

\begin{equation}
\label{eqn:ShafarevichCover}
\mathrm{cov}_k(\mathfrak{G}):=\lbrace H\in\mathrm{cov}(\mathfrak{G})\mid\rho\le d_2(H)\le\rho+n\rbrace,
\end{equation}

\noindent
since \(d_1(\mathfrak{G})=d_1(\mathfrak{G}/\mathfrak{G}^\prime)=d_1(\mathrm{Cl}_p(k))=\rho\).

\end{definition}

\begin{example}
\label{exm:ShafarevichCover}
Note first that the various \(n\)-covers of \(\mathfrak{G}\) form a non-descending chain of sets,
\[\mathrm{cov}_0(\mathfrak{G})\subseteq\mathrm{cov}_1(\mathfrak{G})\subseteq\ldots\subseteq\mathrm{cov}(\mathfrak{G}).\]
The cardinality of the cover can probably take any value.
Even for closely related groups, such as parent-descendant pairs, the values may be very different.
\begin{enumerate}

\item
Generally, if \(\#\mathrm{cov}(\mathfrak{G})=1\)
but the Shafarevich cover of \(\mathfrak{G}\) with respect to \(k\) is empty,
then \(\mathfrak{G}\) cannot be the second \(p\)-class group \(\mathrm{G}_p^2(k)\) of \(k\).

\item
For instance, the cover of the \(3\)-group \(\mathfrak{G}=\langle 243,4\rangle\) with
IPAD \(\tau^{(1)}=\lbrack 1^2;((1^3)^3,21)\rbrack\) and TKT \(\varkappa_1=(4111)\)
consists of \(\mathfrak{G}\) alone.
However, since \(d_2(\mathfrak{G})\ge\mathrm{MR}(\mathfrak{G})=3\) (\(p\)-multiplicator rank),
the Shafarevich cover, \(\mathrm{cov}_k(\mathfrak{G})=\mathrm{cov}_0(\mathfrak{G})\),
with respect to \textit{complex} quadratic fields \(k\), having this IPAD and TKT, is empty,
since \(r=0\), \(k\) with \(\rho=2\) does not contain a primitive third root of unity,
and thus \(n=0\).

\item
The \(3\)-group \(\mathfrak{G}=\langle 729,45\rangle\) is the unique immediate descendant of
\(\langle 243,4\rangle\) possessing a GI-automorphism.
It shares the common IPAD \(\tau^{(1)}=\lbrack 1^2;((1^3)^3,21)\rbrack\) and TKT \(\varkappa_1=(4111)\)
with its parent, and is infinitely capable with nuclear rank \(\mathrm{NR}(\mathfrak{G})=2\).
As mentioned in
\cite[Cor.6.2, p.301]{Ma6},
its cover \(\mathrm{cov}(\mathfrak{G})=\mathcal{T}(\mathfrak{G})\)
coincides with its complete infinite descendant tree,
and even the Shafarevich cover with respect to \textit{complex} quadratic fields \(k\),
\(\mathrm{cov}_k(\mathfrak{G})=\mathrm{cov}_0(\mathfrak{G})=\lbrace S_j\mid j\ge 0\rbrace\),
where \(S_j=\langle 243,4\rangle(-\#1;1-\#2;1)^j-\#1;1-\#2;2\) denotes a series of Schur \(\sigma\)-groups
starting with order \(3^8=6561\) for \(j=0\)
\cite[\S\ 6.2.2, pp.299--304]{Ma6},
is infinite, according to Bartholdi and Bush
\cite{BaBu}.
The Shafarevich cover with respect to \textit{real} quadratic fields \(K\),
\(\mathrm{cov}_K(\mathfrak{G})=\mathrm{cov}_1(\mathfrak{G})\), is more extensive,
containing the four groups \(\langle 2187,270\ldots 273\rangle\)
\cite[Fig.1, p.302]{Ma6}
as the smallest additional elements.
\end{enumerate}

\end{example}



\section{Constraints for higher \(p\)-class groups}
\label{s:Constraints}

Several constraints restrict the possibilities for higher \(p\)-class groups
\(\mathrm{G}_p^n(k)=\mathrm{Gal}(\mathrm{F}_p^n(k)\vert k)\), with \(n\ge 2\) or \(n=\infty\),
of a given class of algebraic number fields \(k\).
It is not always true that the most restrictive constraints
are most helpful for finding a certain group \(\mathrm{G}_p^n(k)\).
For instance, it is sometimes easier to determine the \(p\)-tower group \(\mathrm{G}_p^\infty(K)\)
of a \textit{real} quadratic field \(K\) with looser conditions, which admit lower group orders,
than of a \textit{complex} quadratic field \(k\) with very tight constraints,
which can only be satisfied by groups with huge orders.

Let us compile a \textit{list of constraints} proceeding from the most general to more particular cases:

\begin{enumerate}
\item
the Shafarevich bound for the relation rank of the \(p\)-tower group \(G:=\mathrm{G}_p^\infty(k)\),
\[d_1(G)\le d_2(G)\le d_1(G)+n,\]
with \(n=r+1\) if \(\zeta\in k\) and \(n=r\) if \(\zeta\notin k\),
\item
the cardinality \(\#\mathrm{cov}_k(\mathfrak{G})\) of the Shafarevich cover of the second \(p\)-class group
\(\mathfrak{G}:=\mathrm{G}_p^2(k)\),
\item
the existence of a \textit{generator inverting} (GI-)automorphism of any \(\mathrm{G}_p^n(K)\),
with \(n\ge 2\) or \(n=\infty\), acting as inversion map on the abelianization,
\item
\textit{selection rules} for the class \(\mathrm{cl}(\mathfrak{G})\),
due to \(p\)-class number relations for odd primes \(p\ge 3\),
\item
the \textit{capitulation number} \(\nu=\#\lbrace i\mid\varkappa_1(i)=0\rbrace\),
that is the number of total transfer kernels.
\end{enumerate}

\noindent
The first two items of this list concern \textit{arbitrary} number fields \(k\),
whereas the last three conditions are mainly motivated by (real or complex) \textit{quadratic} fields
\(K=\mathbb{Q}(\sqrt{d})\),
in particular, the very last item only by \textit{complex} quadratic fields with \(d<0\),
which enforce \(\nu=0\)
\cite{CgFt,Ma1}.



\section{Capitulation Type c.18, \((0122)\)}
\label{s:c18}

\subsection{The Ground State of Capitulation Type c.18}
\label{ss:c18GS}

Assume that \(K\) is an algebraic number field
with \(3\)-class group \(\mathrm{Cl}_3(K)\) of type \((3,3)\hat{=}1^2\).
Let \(L_1,\ldots,L_4\) be the four unramified cyclic cubic extensions of \(K\),
and suppose that the \(1^{\mathrm{st}}\) order Artin pattern \(\mathrm{AP}^{(1)}(K)\) of \(K\)
is given by the IPAD 
\(\tau^{(1)}(K)=\lbrack 1^2;\mathbf{2^2},1^3,21,21\rbrack\)
and the IPOD
\(\varkappa^{(1)}(K)=\lbrack G^\prime;\mathbf{G},H_1,H_2,H_2\rbrack\),
i.e. \(\tau_1(K)=((9,9),(3,3,3),(9,3),(9,3))\) are the type invariants of the \(3\)-class groups of the \(L_i\)
and \(\varkappa_1(K)=(0122)\) is the \(3\)-capitulation type of \(K\) in the \(L_i\).

\begin{proposition}
\label{prp:c18GS}

(D.C. Mayer, \(2010\))

\begin{enumerate}

\item
The second \(3\)-class group \(\mathfrak{G}=\mathrm{G}_3^2(K)\) of \(K\)
is isomorphic to the metabelian \(3\)-group \(\langle 729,49\rangle\)
and thus its relation rank satisfies the inequality
\(d_2(\mathfrak{G})\ge d_1(\mathfrak{G})+2\).

\item
Consequently,
if \(K\) is a number field with torsionfree unit rank \(r=1\)
and does not contain the third roots of unity
(in particular, if \(K=\mathbb{Q}(\sqrt{d})\) is a real quadratic field),
then the length of the \(3\)-class field tower of \(K\) must be \(\ell_3(K)\ge 3\).

\end{enumerate}

\end{proposition}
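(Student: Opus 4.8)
The plan is to prove the two assertions in sequence, obtaining (2) as a direct consequence of (1) together with the corrected Shafarevich Theorem~\ref{thm:Shafarevich}.

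For part~(1), I would argue that the prescribed first-order Artin pattern already pins down the metabelian group $\mathfrak{G}=\mathrm{G}_3^2(K)$ uniquely. First I would recall that $\mathfrak{G}$ is a finite metabelian $3$-group with $\mathfrak{G}/\mathfrak{G}'\simeq(3,3)$, hence a descendant of one of the finitely many groups of order $3^3$ with this abelianization, situated on one of the coclass trees $\mathcal{T}^2(\langle 243,6\rangle),\mathcal{T}^2(\langle 243,8\rangle),\ldots$ depicted in Figures~\ref{fig:TreeOverviewQ}--\ref{fig:TreeOverviewU}. Then I would invoke the classification of metabelian $3$-groups with TKT in Nebelung's section~c (type $(0122)$ being c.18): among these, the IPAD component $\tau_1(K)=((9,9),(3,3,3),(9,3),(9,3))$, i.e. $\lbrack 1^2;2^2,1^3,21,21\rbrack$, together with the capitulation type $(0122)$, is realized by exactly one group, namely the mainline vertex $\langle 729,49\rangle$ of order $3^6$ sitting at the top of the coclass tree $\mathcal{T}^2(\langle 243,6\rangle)$. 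Concretely, I would check (or cite from \cite{Ma2,Ne}) that all the proper descendants of $\langle 729,49\rangle$ on this tree already have a strictly larger first TTT-component $\tau_1(1)$ — the left scale in Figure~\ref{fig:TreeOverviewQ} shows it growing from $(2^2)$ through $(32),(3^2),\ldots$ — so $\langle 729,49\rangle$ is the only candidate whose IPAD matches. Once $\mathfrak{G}\simeq\langle 729,49\rangle$ is established, the inequality $d_2(\mathfrak{G})\ge d_1(\mathfrak{G})+2$ follows by computing the generator rank $d_1(\mathfrak{G})=2$ (forced by $\mathfrak{G}/\mathfrak{G}'\simeq(3,3)$) and the $3$-multiplicator rank $\mathrm{MR}(\mathfrak{G})=4$ via MAGMA \cite{MAGMA}, then using the lower bound $d_2(H)\ge\mathrm{MR}(H)$ recalled in the excerpt (from \cite[\S\,14, p.178]{Ma5}).

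For part~(2), suppose $K$ has torsionfree unit rank $r=1$ and $\zeta_3\notin K$. Assume for contradiction that $\ell_3(K)\le 2$. If $\ell_3(K)\le 1$ then $\mathrm{Cl}_3(K)$ would already be trivial or the tower would stop at $\mathrm{F}_3^1(K)$, contradicting $\mathrm{Cl}_3(K)\simeq(3,3)$ being non-trivial and the type $(0122)$ forcing a non-abelian $\mathfrak{G}$ (a total capitulation $0$ is impossible for abelian $\mathfrak{G}$). So the only remaining possibility is $\ell_3(K)=2$, which means the $3$-tower group $G=\mathrm{G}_3^\infty(K)$ coincides with the second $3$-class group $\mathfrak{G}=\mathrm{G}_3^2(K)\simeq\langle 729,49\rangle$ from part~(1), in particular $G$ is finite. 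But then Theorem~\ref{thm:Shafarevich}, applied with $S=\emptyset$ and $\zeta=\zeta_3\notin K$, gives $d_2(G)\le d_1(G)+r=d_1(G)+1$, contradicting the inequality $d_2(G)=d_2(\mathfrak{G})\ge d_1(\mathfrak{G})+2=d_1(G)+2$ obtained in part~(1). Hence $\ell_3(K)\ge 3$. The parenthetical case $K=\mathbb{Q}(\sqrt{d})$ real quadratic is covered since then $(r_1,r_2)=(2,0)$, so $r=r_1+r_2-1=1$, and $\mathbb{Q}(\sqrt{d})$ never contains $\zeta_3$.

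The main obstacle is the uniqueness claim underlying part~(1): one must be certain that no descendant of $\langle 729,49\rangle$ (of order $3^7$ or higher), and no group on a different coclass tree, shares the full IPAD $\lbrack 1^2;2^2,1^3,21,21\rbrack$ and capitulation type $(0122)$. The periodicity of branches (period length~$2$, setting in with $\mathcal{B}(7)$) and the monotone growth of the TTT-components along mainlines make this a finite check in principle, but it rests on Nebelung's structural analysis \cite{Ne} of section~c and on the explicit transfer-kernel and transfer-target computations in \cite{Ma1,Ma2}; I would lean on those references rather than redo the group-theoretic classification from scratch. The remaining steps — the multiplicator-rank computation and the Shafarevich inequality — are then routine.
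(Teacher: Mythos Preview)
Your proposal is correct and follows essentially the same approach as the paper: identify $\mathfrak{G}$ uniquely from the IPAD and TKT via the tree structure, bound $d_2(\mathfrak{G})$ below by the $3$-multiplicator rank, and then contradict the Shafarevich bound for part~(2). The paper's argument for part~(1) is somewhat more self-contained and systematic --- it first pins down $\mathrm{cc}(\mathfrak{G})=2$ by explicit TTT/TKT criteria from \cite{Ma6,Ma2,BuMa} (no component $1^2$ rules out coclass~$1$; only one $1^3$ and no $2$-cycle in $\varkappa_1$ rules out coclass~$\ge 3$), then isolates the tree $\mathcal{T}^2(\langle 243,6\rangle)$ via the three stable TTT components, and finally reads off the unique mainline vertex of order $3^6$ --- whereas you lean more directly on Nebelung's classification and the monotone growth of $\tau_1(1)$ in Figure~\ref{fig:TreeOverviewQ}; but the substance is the same.
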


\begin{proof}

\begin{enumerate}

\item
First, we prove that \(\mathfrak{G}\) must have coclass \(\mathrm{cc}(\mathfrak{G})=2\).
This can be done in two ways,
either using \(\tau_1(K)\) alone or using \(\varkappa_1(K)\) alone.

\begin{itemize}
\item
According to items 1) and 3) of
\cite[Thm.3.2, p.291]{Ma6},
\(\tau_1(\mathfrak{G})=\tau_1(K)\)
must contain three components of type \(1^2\) if \(\mathrm{cc}(\mathfrak{G})=1\),
and two components of type \(1^3\) if \(\mathrm{cc}(\mathfrak{G})\ge 3\),
whence \(\mathrm{cc}(\mathfrak{G})=2\) is the only possibility for
\(\tau_1(\mathfrak{G})=(2^2,1^3,(21)^2)\) without any \(1^2\) and with only one \(1^3\).
\item
According to
\cite[Thm.2.5, p.479]{Ma2},
\(\varkappa_1(\mathfrak{G})=\varkappa_1(K)\) must contain three total kernels,
designated by \(0\), if \(\mathrm{cc}(\mathfrak{G})=1\).
Since all metabelian \(3\)-groups of coclass bigger than \(2\)
are descendants of \(\langle 3^5,3\rangle\),
and the TKT \((2100)\) of this root contains a \(2\)-cycle,
the TKT \(\varkappa_1(\mathfrak{G})\) must also contain a \(2\)-cycle,
if \(\mathrm{cc}(\mathfrak{G})\ge 3\),
according to
\cite[Cor.3.0.2, p.772]{BuMa}.
Therefore, the only possibility for \(\varkappa_1(\mathfrak{G})=(0122)\)
without a \(2\)-cycle and with only one \(0\)
is \(\mathrm{cc}(\mathfrak{G})=2\).
\end{itemize}

Next, we show that \(\mathfrak{G}\) must be a descendant of \(\langle 3^5,6\rangle\).
According to item 2) of
\cite[Thm.3.1, p.290]{Ma6},
the polarized component \(2^2\) of order \(3^4\) of \(\tau_1(\mathfrak{G})=\tau_1(K)\)
cannot occur for a sporadic \(3\)-group outside of coclass-\(2\) trees,
whence \(\mathfrak{G}\) must be a vertex of one of the three coclass-\(2\) trees
with metabelian mainline.
Their roots are
\(\langle 3^5,3\rangle\), resp. \(\langle 3^5,6\rangle\), resp. \(\langle 3^5,8\rangle\).
All descendants of these roots have three stable components of their TTT,
\(((1^3)^2,21)\), resp. \((1^3,(21)^2)\), resp. \(((21)^3)\),
whence \(\tau_1(\mathfrak{G})=(2^2,1^3,(21)^2)\)
unambiguously leads to a descendant of \(\langle 3^5,6\rangle\), by item 2) of
\cite[Thm.3.2, p.291]{Ma6}.
Furthermore, we have
\(d_2(\mathfrak{G})\ge\mathrm{MR}(\mathfrak{G})=4=d_1(\mathfrak{G})+2\),
since the \(p\)-multiplicator rank is a lower bound for the relation rank.

Finally, the Artin pattern
\(\mathrm{AP}(\mathfrak{G})=(\varkappa_1(\mathfrak{G}),\tau_1(\mathfrak{G}))\)
provides a sort of coordinate system
in which the coclass tree with root \(\langle 3^5,6\rangle\) is embedded,
with horizontal axis \(\varkappa_1(\mathfrak{G})\) and vertical axis \(\tau_1(\mathfrak{G})\).
The polarization \(2^2\) of \(\tau_1(\mathfrak{G})=(2^2,1^3,(21)^2)\)
determines the nilpotency class \(\mathrm{cl}(\mathfrak{G})=4\)
and thus also the order \(\lvert\mathfrak{G}\rvert=3^6\), according to item 2) of
\cite[Thm.3.2, p.291]{Ma6},
since the defect \(k=1\) is only possible for type H.4 \((2122)\).
The polarization \(0\) of \(\varkappa_1(\mathfrak{G})=(0122)\) with stable components \((122)\)
unambiguously identifies the mainline vertex of order \(3^6\),
which is \(\langle 3^6,49\rangle\), according to Figure
\ref{fig:TreeOverviewQ}.

\item
According to
\cite[Thm.6, p.140]{Sh},
the relation rank \(d_2(G)\) of the \(3\)-class tower group
\(G:=\mathrm{Gal}(\mathrm{F}_3^\infty(K)\vert K)\)
of a number field \(K\) with unit rank \(r=r_1+r_2-1=1\) and \(\zeta_3\notin K\)
must satisfy \(d_2(G)\le d_1(G)+r=2+1=3\),
whence \(G\not\simeq\mathfrak{G}\) and \(\ell_3(K)=\mathrm{dl}(G)\ge 3\).
\end{enumerate}
\end{proof}

\noindent
Suppose now that, under the assumptions preceding Proposition
\ref{prp:c18GS},
\(G=\mathrm{G}_3^\infty(K)\) denotes the \(3\)-class tower group of \(K\) and
we are additionally given the \(2^{\mathrm{nd}}\) order IPAD of \(K\),
\[\tau^{(2)}(K)=
\lbrack 1^2;(\mathbf{2^2};\tau_1(L_1)),(1^3;\tau_1(L_2)),(21;\tau_1(L_3)),(21;\tau_1(L_4))\rbrack,\]
with fixed \(\tau_1(L_1)=((21^2)^4)\) and  \(\tau_1(L_3)=(21^2,(21)^3)\).



\begin{theorem}
\label{thm:c18GS}

(D.C. Mayer, Aug. \(2015\))\\
Among the non-metabelian candidates for the \(3\)-tower group \(G\) of \(K\),
the immediate descendants of step size \(1\) of \(\langle 729,49\rangle\)
are distinguished by the following criteria:

\begin{enumerate}

\item
\(\tau_1(L_2)=(21^2,\mathbf{(21^2)^3},(1^2)^9)\), \(\tau_1(L_4)=(21^2,\mathbf{(21)^3})\)
\(\Longleftrightarrow\)
\(G\simeq\langle 2187,\mathbf{284}\rangle\),

\item
\(\tau_1(L_2)=(21^2,\mathbf{(1^3)^3},(1^2)^9)\), \(\tau_1(L_4)=(21^2,\mathbf{(31)^3})\)
\(\Longleftrightarrow\)
\(G\simeq\langle 2187,\mathbf{291}\rangle\).

\end{enumerate}

\noindent
In both cases, we have derived length \(\mathrm{dl}(G)=3\) and nilpotency class \(\mathrm{cl}(G)=5\).

\end{theorem}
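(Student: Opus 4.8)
The plan is to identify $G$ among the non-metabelian descendants of $\langle 729,49\rangle$ by using the second-order Artin pattern as a fingerprint, exactly as Proposition~\ref{prp:c18GS} used the first-order pattern to pin down $\mathfrak{G}=\mathrm{G}_3^2(K)=\langle 729,49\rangle$. First I would invoke the bifurcation picture of the coclass graph $\mathcal{G}(3,2)\to\mathcal{G}(3,3)$ underlying Figure~\ref{fig:TreeOverviewQ}: since $\mathfrak{G}/\mathfrak{G}^{\prime\prime}\simeq\langle 729,49\rangle$ and $G$ is non-metabelian with $G/G^{\prime\prime}\simeq\mathfrak{G}$, the group $G$ must lie in the cover $\mathrm{cov}(\langle 729,49\rangle)$, and the Shafarevich bound of Theorem~\ref{thm:Shafarevich} (with $r=1$, $\zeta_3\notin K$) forces $d_2(G)\le d_1(G)+1=3$, i.e.\ $G\in\mathrm{cov}_1(\langle 729,49\rangle)$. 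The mainline vertex $\langle 729,49\rangle$ has nuclear rank $\ge 1$ and its non-metabelian immediate descendants of step size~$1$ are precisely the capable vertices of order $3^7$ on the tree $\mathcal{T}^2(\langle 243,6\rangle)$ that sit directly below it: by Figure~\ref{fig:TreeOverviewQ} these are $\langle 2187,284\rangle$ and $\langle 2187,291\rangle$, the two square-vertices in branch $\mathcal{B}(7)$ above the TKT labels c.18 (the other square vertices $\langle 2187,285\rangle$ etc.\ are metabelian descendants of larger step size or descendants of siblings). So the candidate set is $\{\langle 2187,284\rangle,\langle 2187,291\rangle\}$, and what remains is to separate these two by an invariant that is computable from number-theoretic data, namely $\tau^{(2)}(K)$.

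The core of the argument is the identity $\tau_1(L_i)=\tau_1(H_i)$ for the index-$3$ subgroups $H_i\in\mathrm{Lyr}_1(G)$, combined with the fact that the abelianized transfer targets $H_i/H_i^\prime$ of a finite $p$-group and of any group with the same second derived quotient agree only through the first layer; to get the \emph{second}-layer data $\tau_1(L_2)$ and $\tau_1(L_4)$ one genuinely needs $G$ itself, not merely $\mathfrak{G}$, which is why the two candidates can be distinguished. I would compute, for each of $\langle 2187,284\rangle$ and $\langle 2187,291\rangle$, the iterated IPAD of second order $\tau^{(2)}$ — concretely, for the two distinguished subgroups $H_2$ (with $H_2^{\mathrm{ab}}\simeq(3,3,3)$, the ``polarized'' $1^3$ component corresponding to $L_2$) and $H_4$ (with $H_4^{\mathrm{ab}}\simeq(9,3)$, corresponding to $L_4$) one reads off the list of abelianizations of the nine, resp.\ three, subgroups of index $3$ in $H_2$, resp.\ $H_4$. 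The claimed outputs are $\tau_1(L_2)=(21^2,(21^2)^3,(1^2)^9)$, $\tau_1(L_4)=(21^2,(21)^3)$ for $\langle 2187,284\rangle$, and $\tau_1(L_2)=(21^2,(1^3)^3,(1^2)^9)$, $\tau_1(L_4)=(21^2,(31)^3)$ for $\langle 2187,291\rangle$; since these two pattern-pairs are visibly distinct (the boldfaced sub-blocks $(21^2)^3$ versus $(1^3)^3$, and $(21)^3$ versus $(31)^3$, differ), the map from $\{\langle 2187,284\rangle,\langle 2187,291\rangle\}$ to possible values of $\tau^{(2)}(K)$ is injective, giving the two stated equivalences $\Longleftrightarrow$. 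The reverse implications hold because the candidate set has been shown to be exhausted by these two groups, so observing either pattern forces the corresponding group.

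For the final sentence, derived length $\mathrm{dl}(G)=3$ follows since $G$ is non-metabelian ($\mathrm{dl}(G)\ge 3$) while $G/G^{\prime\prime}\simeq\langle 729,49\rangle$ is metabelian and $G^{\prime\prime}$ is contained in the last nontrivial term of the lower central series, forcing $G^{\prime\prime\prime}=1$; and $\mathrm{cl}(G)=5$ is read from the order $\lvert G\rvert=3^7$ together with the position of $\langle 2187,284\rangle$, $\langle 2187,291\rangle$ as step-size-$1$ descendants of the class-$4$ group $\langle 729,49\rangle$ on the coclass-$2$ mainline (each bifurcation step down a coclass tree raises the class by exactly~$1$, so class $4+1=5$), consistent with coclass $\mathrm{cc}(G)=7-5=2$ of the metabelianization becoming coclass $\ge 3$ for $G$ itself via the bifurcation. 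The main obstacle I anticipate is the explicit computation of the second-layer abelianization lists $\tau_1(L_2)$ and $\tau_1(L_4)$ for the two order-$3^7$ groups: these require either a careful hand computation with power-commutator presentations of $\langle 2187,284\rangle$ and $\langle 2187,291\rangle$ (tracking the nine index-$9$ subgroups inside each index-$3$ subgroup and their commutator quotients), or an appeal to a machine computation in MAGMA; establishing that the two candidates are genuinely separated by this invariant — rather than sharing it — is the one step where I would rely on an explicit calculation rather than the structural tree theory of \cite{Ma6}.
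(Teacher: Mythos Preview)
Your overall strategy---locate \(G\) in \(\mathrm{cov}(\langle 729,49\rangle)\), then separate the candidates by \(\tau^{(2)}\)---matches the paper's, and you correctly anticipate that the actual values of \(\tau_1(L_2),\tau_1(L_4)\) for \(\langle 2187,284\rangle\) and \(\langle 2187,291\rangle\) must come from an explicit (machine) calculation. But there is a genuine gap in how you exhaust the candidate set.

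You pass from ``\(G\in\mathrm{cov}(\langle 729,49\rangle)\)'' to ``the candidate set is \(\{\langle 2187,284\rangle,\langle 2187,291\rangle\}\)'' by (i) invoking Shafarevich to land in \(\mathrm{cov}_1\), and (ii) reading the two non-metabelian step-\(1\) descendants off Figure~\ref{fig:TreeOverviewQ}. Neither step does what you need. First, the hypotheses preceding Theorem~\ref{thm:c18GS} do \emph{not} assume \(r=1\) or \(\zeta_3\notin K\); those assumptions enter only in Proposition~\ref{prp:c18GS}(2), so Shafarevich is not available here. Second, even granting \(G\in\mathrm{cov}_1\), identifying two immediate descendants does not show that \(\mathrm{cov}_1\) (let alone \(\mathrm{cov}\)) contains \emph{only} those two non-metabelian groups. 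A priori there could be a group \(H\) of order \(3^9\) or \(3^{10}\), further down the descendant tree (e.g.\ past the bifurcation to coclass \(3\)), with \(H/H^{\prime\prime}\simeq\langle 729,49\rangle\) and the same \(\tau^{(2)}\)-pattern as \(\langle 2187,284\rangle\); then your ``\(\Longleftarrow\)'' would fail. The paper closes this gap computationally: it runs the \(p\)-group generation algorithm on the full descendant tree \(\mathcal{T}(\langle 243,6\rangle)\) (not just \(\mathcal{T}^2\)) down to order \(3^{11}\), checks \(H/H^{\prime\prime}\) for every non-metabelian vertex, and observes that the stable tail of \(\tau^{(2)}\) bifurcates cleanly by coclass (\((1^3)^3,(21)^3\) for coclass \(2\) versus \((21^2)^3,(31)^3\) for coclass \(\ge 3\)). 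Only this exhaustive sweep yields \(\mathrm{cov}(\langle 729,49\rangle)=\{\langle 729,49\rangle,\langle 2187,284\rangle,\langle 2187,291\rangle\}\), which is the fact your ``reverse implications'' actually rest on.

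Two smaller points: \(\langle 2187,284\rangle\) and \(\langle 2187,291\rangle\) are terminal, not capable, on the pruned tree (Figure~\ref{fig:C18PrunedTreeQ}); and both have coclass \(2\), so your remark about ``coclass \(\ge 3\) for \(G\) itself via the bifurcation'' is off---the bifurcation to coclass \(3\) starts at the step-\(2\) descendant \(\langle 729,49\rangle\text{-}\#2;1\), not at these two groups.
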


\begin{proof}
We use the \(p\)-group generation algorithm by Newman
\cite{Nm}
and O'Brien
\cite{Ob},
which is implemented in our licence of the computational algebra system MAGMA
\cite{BCP,BCFS,MAGMA},
to construct the descendant tree \(\mathcal{T}(R)\) of the root \(R=\langle 243,6\rangle\),
which is restricted to the coclass tree \(\mathcal{T}^2(R)\) in Figure
\ref{fig:TreeOverviewQ}
by ignoring the bifurcation at the not coclass-settled vertex \(\langle 729,49\rangle\).
In Figure
\ref{fig:C18PrunedTreeQ},
however, all periodic bifurcations
\cite[\S\ 21.2]{Ma5}
in the complete descendant tree are taken into consideration,
but the tree is pruned from all TKTs different from c.18.
In parallel computation with the recursive tree construction,
the iterated IPAD of second order \(\tau^{(2)}(V)\) is determined for each vertex \(V\) and
non-metabelian vertices \(H\) are checked for their second derived quotient \(H/H^{\prime\prime}\).
The construction can be terminated at order \(3^{11}\), because
several components of the \(2^{\mathrm{nd}}\) order IPAD become stable
and the remaining components reveal a deterministic growth: we have
\[\tau_1(L_2)=(\ast,\mathbf{(1^3)^3},(1^2)^9),\ \tau_1(L_3)=\tau_1(L_4)=(\ast,\mathbf{(21)^3})\text{ for vertices of coclass }\mathbf{2}\]
and
\[\tau_1(L_2)=(\ast,\mathbf{(21^2)^3},(1^2)^9),\ \tau_1(L_3)=\tau_1(L_4)=(\ast,\mathbf{(31)^3})\text{ for vertices of coclass }\mathbf{3}.\]
Therefore, the vertices \(\langle 2187,284\rangle\) and \(\langle 2187,291\rangle\)
are characterized uniquely by their iterated IPAD of second order,
and the cover of their common parent is given by

\(\mathrm{cov}(\langle 729,49\rangle)=\lbrace\langle 729,49\rangle,\langle 2187,284\rangle,\langle 2187,291\rangle\rbrace\).
\end{proof}



\subsection{Real Quadratic Fields of Type c.18}
\label{ss:RQFc18GS}

\begin{proposition}
\label{prp:RQFc18GS}

(D.C. Mayer, Feb. \(2010\))\\
In the range \(0<d<10^7\) of fundamental discriminants \(d\)
of real quadratic fields \(K=\mathbb{Q}(\sqrt{d})\),
there exist precisely \(\mathbf{28}\) cases
with \(3\)-capitulation type \(\varkappa_1(K)=(0122)\).

\end{proposition}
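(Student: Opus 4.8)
The plan is to obtain the list of discriminants by an exhaustive computational search, relying on the classical dihedral correspondence that translates a statement about $\mathrm{Cl}_3(K)$ of type $(3,3)$ and the capitulation type $\varkappa_1(K)$ into data about certain non-Galois cubic fields. First I would enumerate all fundamental discriminants $0<d<10^7$ for which the real quadratic field $K=\mathbb{Q}(\sqrt{d})$ has $3$-rank exactly $2$, i.e. $\mathrm{Cl}_3(K)\simeq(3,3)$; this can be done either directly with a class-group routine in MAGMA or, more efficiently, by counting the cubic fields of discriminant $d$ (via the Scholz reflection theorem / Hasse's correspondence between unramified cyclic cubic extensions $L_i\vert K$ and cubic fields $F$ with $\operatorname{disc}(F)=d$). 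For each surviving $d$ one then has the four unramified cyclic cubic extensions $L_1,\dots,L_4$ of $K$ at hand.

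Next, for each such $K$ I would compute the first-order Artin pattern $\mathrm{AP}^{(1)}(K)$: the IPAD $\tau_1(K)=(\mathrm{Cl}_3(L_i))_{1\le i\le4}$ and the IPOD $\varkappa_1(K)=(\ker j_i)_{1\le i\le4}$, where $j_i:\mathrm{Cl}_3(K)\to\mathrm{Cl}_3(L_i)$ is the class extension (transfer) homomorphism. The capitulation type is read off by locating, for each $i$, which of the four subgroups of order $3$ in $\mathrm{Cl}_3(K)$ (or the whole group, giving the symbol $0$) is the kernel of $j_i$; the output is a length-$4$ word over $\{0,1,2,3,4\}$ up to the usual normalisation of the labelling. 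I then retain exactly those $d$ whose normalised capitulation type equals $(0122)$, i.e. Nebelung's type c.18 with $\tau_1(K)=((9,9),(3,3,3),(9,3),(9,3))$ as recorded before Proposition~\ref{prp:c18GS}; counting these yields the claimed number $\mathbf{28}$.

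The main obstacle is computational rather than conceptual: one must verify the $3$-class group and, crucially, compute capitulation kernels for on the order of millions of discriminants, which requires careful arithmetic in the cubic and sextic fields $L_i$ and a reliable implementation of the Artin transfer; rounding errors or incomplete class-group certification (e.g. under GRH vs.\ unconditional bounds) could corrupt the count, so the delicate point is to ensure each $\mathrm{Cl}_3(L_i)$ and each kernel is determined correctly and that the type-labelling convention is applied consistently. A secondary subtlety is distinguishing type c.18 $(0122)$ from neighbouring types such as E.6 $(1122)$ or E.14 $(3122)$, which share the stable tail $(122)$ and differ only in the polarised first component; this is handled automatically once the kernels are computed exactly, but it is worth double-checking the borderline cases by hand. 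Finally I would cross-validate the resulting list against the tabulated minimal discriminant $d=534\,824$ of type c.18 quoted in the introduction, and against any overlapping data in \cite{Ma1,Ma2}, to confirm completeness and correctness of the value $\mathbf{28}$.
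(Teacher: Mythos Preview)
Your proposal is correct and takes essentially the same approach as the paper: both are exhaustive computational enumerations of the capitulation type over the given discriminant range. The paper's own proof is even terser than yours---it simply cites the tables in \cite[Tbl.~6.5]{Ma3} (with a correction: the published frequency 29 should read 28) and refers to the principalization algorithm implemented in PARI/GP and documented in \cite[\S5]{Ma3}, which is precisely the transfer-kernel computation you outline.
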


\begin{proof}
The results
\cite[Tbl.6.5, p.452]{Ma3},
where the entry in column freq. should be \(28\) instead of \(29\) in the first row
and \(4\) instead of \(3\) in the fourth row,
were computed by means of the free number theoretic computer algebra system PARI/GP
\cite{PARI}
using an implementation of our own principalization algorithm in a PARI script, as described in detail in
\cite[\S\ 5, pp.446--450]{Ma3}.
\end{proof}

\begin{theorem} 
\label{thm:RQFc18GS}

(D.C. Mayer, Aug. \(2015\))

\begin{enumerate}

\item
The \(\mathbf{10}\) real quadratic fields \(K=\mathbb{Q}(\sqrt{d})\) with the following discriminants \(d\)
(\(\mathbf{36}\%\) of \(28\)),
\[
\begin{aligned}
1\,030\,117 &,& 3\,259\,597 &,& 3\,928\,632 &,& 4\,593\,673 &,& 5\,327\,080, \\
5\,909\,813 &,& 7\,102\,277 &,& 7\,738\,629 &,& 7\,758\,589 &,& 9\,583\,736, 
\end{aligned}
\]

\noindent
have \(3\)-class tower group \(G\simeq\langle 3^7,\mathbf{284}\rangle\) and \(3\)-tower length \(\ell_3(K)=3\).

\item
The \(\mathbf{18}\) real quadratic fields \(K=\mathbb{Q}(\sqrt{d})\) with the following discriminants \(d\)
(\(\mathbf{64}\%\) of \(28\)),
\[
\begin{aligned}
   534\,824 &,& 2\,661\,365 &,& 2\,733\,965 &,& 3\,194\,013 &,& 3\,268\,781, \\
4\,006\,033 &,& 5\,180\,081 &,& 5\,250\,941 &,& 5\,489\,661 &,& 6\,115\,852, \\
6\,290\,549 &,& 7\,712\,184 &,& 7\,857\,048 &,& 7\,943\,761 &,& 8\,243\,113, \\
8\,747\,997 &,& 8\,899\,661 &,& 9\,907\,837 &,&  &&  
\end{aligned}
\]

\noindent
have \(3\)-class tower group \(G\simeq\langle 3^7,\mathbf{291}\rangle\) and \(3\)-tower length \(\ell_3(K)=3\).

\end{enumerate}

\end{theorem}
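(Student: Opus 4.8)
The plan is to first pin down the second \(3\)-class group of each of the \(28\) fields, then narrow the candidates for the \(3\)-tower group down to two explicit finite \(3\)-groups, and finally separate the two cases by means of the iterated IPAD of second order.

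First I would, for every discriminant \(d\) in Proposition \ref{prp:RQFc18GS}, recompute the first-order Artin pattern of \(K=\mathbb{Q}(\sqrt{d})\) (with the principalization algorithm of \cite{Ma3} in PARI/GP, or with the transfer routines of MAGMA) and confirm that \(K\) realizes the \emph{ground state} of type c.18, i.e.\ \(\tau^{(1)}(K)=\lbrack 1^2;2^2,1^3,21,21\rbrack\) and \(\varkappa^{(1)}(K)=(0122)\); these data are already tabulated in \cite{Ma3}, and \(d=534\,824\) (which sits at the vertex \(\langle 729,49\rangle\) in Figure \ref{fig:TreeOverviewQ}) is the minimal instance. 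By Proposition \ref{prp:c18GS}(1) this forces \(\mathfrak{G}:=\mathrm{G}_3^2(K)\simeq\langle 729,49\rangle\), and since a real quadratic field has torsionfree unit rank \(r=1\) and does not contain \(\zeta_3\), Proposition \ref{prp:c18GS}(2) gives \(\ell_3(K)\ge 3\), so \(G:=\mathrm{G}_3^\infty(K)\) is non-metabelian. Because \(G/G''\simeq\langle 729,49\rangle\) has nilpotency class \(4\), for every \(c\ge 5\) the finite quotient \(G/\gamma_c(G)\) again has second derived quotient isomorphic to \(\langle 729,49\rangle\), hence lies in the finite set \(\mathrm{cov}(\langle 729,49\rangle)=\lbrace\langle 729,49\rangle,\langle 2187,284\rangle,\langle 2187,291\rangle\rbrace\) furnished by Theorem \ref{thm:c18GS}; were \(G\) infinite, these quotients would have unbounded order, a contradiction. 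Therefore \(G\) is finite, and being non-metabelian it must be isomorphic to \(\langle 2187,284\rangle\) or to \(\langle 2187,291\rangle\) (one also checks with MAGMA that \(d_2=3\) for both groups, consistent with the bound \(d_2(G)\le d_1(G)+r=3\) of Theorem \ref{thm:Shafarevich}).

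Next, to decide which of the two groups occurs for a given \(d\), I would compute the iterated IPAD of second order \(\tau^{(2)}(K)\). The four unramified cyclic cubic extensions \(L_1,\dots,L_4\) of \(K\) are distinguished by \(\tau_1(K)=((9,9),(3,3,3),(9,3),(9,3))\), so \(L_2\) is the one with \(\mathrm{Cl}_3(L_2)\simeq(3,3,3)\) and \(L_3,L_4\) are the two with \(3\)-class group \((9,3)\); it then remains to determine \(\tau_1(L_2)\) and \(\tau_1(L_4)\), i.e.\ the abelian type invariants of the \(3\)-class groups of the unramified cyclic cubic relative extensions of \(L_2\) and of \(L_4\), which are number fields of absolute degree \(18\). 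By the criteria of Theorem \ref{thm:c18GS}, the outcome \(\tau_1(L_2)=(21^2,(21^2)^3,(1^2)^9)\) together with \(\tau_1(L_4)=(21^2,(21)^3)\) forces \(G\simeq\langle 2187,284\rangle\), whereas \(\tau_1(L_2)=(21^2,(1^3)^3,(1^2)^9)\) together with \(\tau_1(L_4)=(21^2,(31)^3)\) forces \(G\simeq\langle 2187,291\rangle\); concretely it is enough to see whether the three relevant degree-\(18\) fields over \(L_2\), respectively over \(L_4\), have \(3\)-class group of order \(3^3\) or of order \(3^4\). Running this computation over all \(28\) discriminants then produces exactly the partition into the \(\mathbf{10}\) discriminants with \(G\simeq\langle 3^7,284\rangle\) and the \(\mathbf{18}\) discriminants with \(G\simeq\langle 3^7,291\rangle\) claimed in the theorem.

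Finally, since both \(\langle 2187,284\rangle\) and \(\langle 2187,291\rangle\) are finite of derived length \(\mathrm{dl}(G)=3\) (Theorem \ref{thm:c18GS}), the tower is finite with \(\mathrm{Gal}(\mathrm{F}_3^\infty(K)\vert\mathrm{F}_3^2(K))=G''\ne 1\) and \(\mathrm{Gal}(\mathrm{F}_3^\infty(K)\vert\mathrm{F}_3^3(K))=G'''=1\), so \(\mathrm{F}_3^2(K)\subsetneq\mathrm{F}_3^3(K)=\mathrm{F}_3^\infty(K)\) and \(\ell_3(K)=3\), matching the lower bound \(\ell_3(K)\ge 3\) from the first step. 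The hard part is the computation of \(\tau_1(L_2)\) and \(\tau_1(L_4)\): it requires determining the \(3\)-class groups of a whole batch of number fields of degree \(18\) over \(\mathbb{Q}\) for each of the \(28\) discriminants, which strains the class-group machinery of MAGMA and in practice has to be organized carefully (and, where necessary, carried out under GRH); once these invariants are in hand, the identification of \(G\) is merely a matter of matching them against the two alternatives supplied by Theorem \ref{thm:c18GS}.
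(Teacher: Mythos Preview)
Your proposal is correct and follows essentially the same strategy as the paper: verify the first-order Artin pattern (IPAD and IPOD) for all \(28\) fields, invoke Proposition~\ref{prp:c18GS} to fix \(\mathfrak{G}\simeq\langle 729,49\rangle\) and obtain \(\ell_3(K)\ge 3\), then compute the iterated IPAD of second order \(\tau^{(2)}(K)\) and apply the dichotomy of Theorem~\ref{thm:c18GS} to separate \(\langle 2187,284\rangle\) from \(\langle 2187,291\rangle\). The paper's own proof is much terser, simply listing the two possible \(2^{\mathrm{nd}}\) IPADs and citing Theorem~\ref{thm:c18GS}; your write-up makes explicit one point the paper leaves implicit, namely the finiteness of \(G\) via the finiteness of \(\mathrm{cov}(\langle 729,49\rangle)\) (your argument that \(G/\gamma_c(G)\in\mathrm{cov}(\langle 729,49\rangle)\) for all \(c\ge 5\), hence bounded order, hence \(G\) finite), and also adds the consistency check \(d_2=3\) against the Shafarevich bound. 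These are welcome clarifications rather than a different route.
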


\begin{proof}
Since all these real quadratic fields \(K=\mathbb{Q}(\sqrt{d})\) have
\(3\)-capitulation type \(\varkappa_1(K)=(0122)\) and \(1^{\mathrm{st}}\) IPAD  
\(\tau^{(1)}(K)=\lbrack 1^2;\mathbf{2^2},1^3,(21)^2\rbrack\),
and the \(10\) fields in the first list have \(2^{\mathrm{nd}}\) IPAD
\[\tau_1(L_1)=((21^2)^4),\ \tau_1(L_2)=(21^2,\mathbf{(21^2)^3},(1^2)^9),\ \tau_1(L_3)=(21^2,(21)^3),\ \tau_1(L_4)=(21^2,\mathbf{(21)^3}),\]
whereas the \(18\) fields in the second list have \(2^{\mathrm{nd}}\) IPAD
\[\tau_1(L_1)=((21^2)^4),\ \tau_1(L_2)=(21^2,\mathbf{(1^3)^3},(1^2)^9),\ \tau_1(L_3)=(21^2,(21)^3),\ \tau_1(L_4)=(21^2,\mathbf{(31)^3}),\]
the claim is a consequence of Theorem
\ref{thm:c18GS}.
\end{proof}

\begin{remark}
\label{rmk:RQFc18GS}
Unpublished results of M.R. Bush show that there are \(4318\) real quadratic fields
with discriminants \(0<d<10^9\) having the IPAD in Proposition
\ref{prp:RQFc18GS}.
For the remaining \(4290\) cases outside of the range \(0<d<10^7\),
we cannot specify the \(3\)-tower group,
since the computations would require exceeding amounts of CPU time.
However, according to Proposition
\ref{prp:c18GS}
and Theorem
\ref{thm:c18GS},
we know for sure that the length of the \(3\)-tower is certainly \(\ell_3(K)=3\). 
\end{remark}



Figure
\ref{fig:C18PrunedTreeQ}
visualizes the groups in Theorems
\ref{thm:c18GS}
and
\ref{thm:c18ES1}
and their population in Theorems
\ref{thm:RQFc18GS}
and
\ref{thm:RQFc18ES1}.

{\tiny

\begin{figure}[hb]
\caption{Non-metabelian \(3\)-tower groups \(G\) on the pruned tree \(\mathcal{T}_\ast(\langle 243,6\rangle)\)}
\label{fig:C18PrunedTreeQ}


\setlength{\unitlength}{0.8cm}
\begin{picture}(18,22)(-6,-21)

\put(-5,0.5){\makebox(0,0)[cb]{Order}}
\put(-5,0){\line(0,-1){18}}
\multiput(-5.1,0)(0,-2){10}{\line(1,0){0.2}}
\put(-5.2,0){\makebox(0,0)[rc]{\(243\)}}
\put(-4.8,0){\makebox(0,0)[lc]{\(3^5\)}}
\put(-5.2,-2){\makebox(0,0)[rc]{\(729\)}}
\put(-4.8,-2){\makebox(0,0)[lc]{\(3^6\)}}
\put(-5.2,-4){\makebox(0,0)[rc]{\(2\,187\)}}
\put(-4.8,-4){\makebox(0,0)[lc]{\(3^7\)}}
\put(-5.2,-6){\makebox(0,0)[rc]{\(6\,561\)}}
\put(-4.8,-6){\makebox(0,0)[lc]{\(3^8\)}}
\put(-5.2,-8){\makebox(0,0)[rc]{\(19\,683\)}}
\put(-4.8,-8){\makebox(0,0)[lc]{\(3^9\)}}
\put(-5.2,-10){\makebox(0,0)[rc]{\(59\,049\)}}
\put(-4.8,-10){\makebox(0,0)[lc]{\(3^{10}\)}}
\put(-5.2,-12){\makebox(0,0)[rc]{\(177\,147\)}}
\put(-4.8,-12){\makebox(0,0)[lc]{\(3^{11}\)}}
\put(-5.2,-14){\makebox(0,0)[rc]{\(531\,441\)}}
\put(-4.8,-14){\makebox(0,0)[lc]{\(3^{12}\)}}
\put(-5.2,-16){\makebox(0,0)[rc]{\(1\,594\,323\)}}
\put(-4.8,-16){\makebox(0,0)[lc]{\(3^{13}\)}}
\put(-5.2,-18){\makebox(0,0)[rc]{\(4\,782\,969\)}}
\put(-4.8,-18){\makebox(0,0)[lc]{\(3^{14}\)}}
\put(-5,-18){\vector(0,-1){2}}

\put(0.1,0.2){\makebox(0,0)[lb]{\(\langle 6\rangle\)}}
\put(0.1,-1.8){\makebox(0,0)[lb]{\(\langle 49\rangle\) (not coclass-settled)}}
\put(1.1,-2.8){\makebox(0,0)[lb]{\(1^{\text{st}}\) bifurcation}}
\put(0.1,-3.8){\makebox(0,0)[lb]{\(\langle 285\rangle\)}}
\put(0.1,-5.8){\makebox(0,0)[lb]{\(1;1\)}}
\put(0.1,-7.8){\makebox(0,0)[lb]{\(1;1\)}}
\put(0.1,-9.8){\makebox(0,0)[lb]{\(1;1\)}}
\put(0.1,-11.8){\makebox(0,0)[lb]{\(1;1\)}}
\multiput(0,0)(0,-2){7}{\circle*{0.2}}
\multiput(0,0)(0,-2){6}{\line(0,-1){2}}
\put(0,-12){\vector(0,-1){2}}
\put(-0.2,-14.2){\makebox(0,0)[rt]{\(\mathcal{T}_\ast^2(\langle 243,6\rangle)\)}}

\put(-2,-4.2){\makebox(0,0)[ct]{\(\langle 291\rangle\)}}
\put(-2,-8.2){\makebox(0,0)[ct]{\(1;7\)}}
\put(-2,-12.2){\makebox(0,0)[ct]{\(1;7\)}}
\multiput(0,-2)(0,-4){3}{\line(-1,-1){2}}
\multiput(-2.05,-4.05)(0,-4){3}{\framebox(0.1,0.1){}}

\put(-1,-4.2){\makebox(0,0)[ct]{\(\langle 284\rangle\)}}
\multiput(0,-2)(0,-4){1}{\line(-1,-2){1}}
\multiput(-1.05,-4.05)(0,-4){1}{\framebox(0.1,0.1){}}

\put(0,-2){\line(1,-1){4}}

\put(4.1,-5.8){\makebox(0,0)[lb]{\(2;1\)}}
\put(4.1,-7.8){\makebox(0,0)[lb]{\(1;1\) (not coclass-settled)}}
\put(5.1,-8.8){\makebox(0,0)[lb]{\(2^{\text{nd}}\) bifurcation}}
\put(4.1,-9.8){\makebox(0,0)[lb]{\(1;2\)}}
\put(4.1,-11.8){\makebox(0,0)[lb]{\(1;1\)}}
\put(4.1,-13.8){\makebox(0,0)[lb]{\(1;1\)}}
\multiput(3.95,-6.05)(0,-2){5}{\framebox(0.1,0.1){}}
\multiput(4,-6)(0,-2){4}{\line(0,-1){2}}
\put(4,-14){\vector(0,-1){2}}
\put(3.8,-16.2){\makebox(0,0)[rt]{\(\mathcal{T}_\ast^3(\langle 729,49\rangle-\#2;1)\)}}

\put(2,-10.2){\makebox(0,0)[ct]{\(1;8\)}}
\put(2,-14.2){\makebox(0,0)[ct]{\(1;7\)}}
\multiput(4,-8)(0,-4){2}{\line(-1,-1){2}}
\multiput(1.95,-10.05)(0,-4){2}{\framebox(0.1,0.1){}}

\put(3,-10.2){\makebox(0,0)[ct]{\(1;1\)}}
\multiput(4,-8)(0,-4){1}{\line(-1,-2){1}}
\multiput(2.95,-10.05)(0,-4){1}{\framebox(0.1,0.1){}}

\put(4,-8){\line(1,-1){4}}

\put(8.1,-11.8){\makebox(0,0)[lb]{\(2;1\)}}
\put(8.1,-13.8){\makebox(0,0)[lb]{\(1;1\) (not coclass-settled)}}
\put(9.1,-14.8){\makebox(0,0)[lb]{\(3^{\text{rd}}\) bifurcation}}
\put(8.1,-15.8){\makebox(0,0)[lb]{\(1;2\)}}
\multiput(7.95,-12.05)(0,-2){3}{\framebox(0.1,0.1){}}
\multiput(8,-12)(0,-2){2}{\line(0,-1){2}}
\put(8,-16){\vector(0,-1){2}}
\put(7.8,-18.2){\makebox(0,0)[rt]{\(\mathcal{T}_\ast^4(\langle 729,49\rangle-\#2;1-\#1;1-\#2;1)\)}}

\put(6,-16.2){\makebox(0,0)[ct]{\(1;8\)}}
\multiput(8,-14)(0,-4){1}{\line(-1,-1){2}}
\multiput(5.95,-16.05)(0,-4){1}{\framebox(0.1,0.1){}}

\put(7,-16.2){\makebox(0,0)[ct]{\(1;1\)}}
\multiput(8,-14)(0,-4){1}{\line(-1,-2){1}}
\multiput(6.95,-16.05)(0,-4){1}{\framebox(0.1,0.1){}}

\put(8,-14){\line(1,-1){4}}

\put(12.1,-17.8){\makebox(0,0)[lb]{\(2;1\)}}
\multiput(11.95,-18.05)(0,-2){1}{\framebox(0.1,0.1){}}
\put(12,-18){\vector(0,-1){2}}
\put(11.8,-19.9){\makebox(0,0)[rt]{\(\mathcal{T}_\ast^5(\langle 729,49\rangle-\#2;1-\#1;1-\#2;1-\#1;1-\#2;1)\)}}



\multiput(0.3,-2.2)(0,-4){3}{\oval(1,1.5)}
\put(-0.6,-2.1){\makebox(0,0)[rc]{\underbar{\textbf{\(\#4318\)}}}}
\put(-0.6,-6.1){\makebox(0,0)[rc]{\underbar{\textbf{\(\#138\)}}}}
\put(-0.6,-10.1){\makebox(0,0)[rc]{\underbar{\textbf{\(\#5\)}}}}

\multiput(-2,-4.25)(0,-4){3}{\oval(1,1.5)}
\put(-2,-5.3){\makebox(0,0)[rc]{\underbar{\textbf{\(+534\,824\)}}}}
\put(-2,-9.3){\makebox(0,0)[cc]{\underbar{\textbf{\(+13\,714\,789\)}}}}
\put(-2,-13.3){\makebox(0,0)[cc]{\underbar{\textbf{\(+241\,798\,776\)}}}}

\multiput(-1,-4.25)(0,-4){1}{\oval(1,1.5)}
\put(-1,-5.3){\makebox(0,0)[lc]{\underbar{\textbf{\(+1\,030\,117\)}}}}

\multiput(2.5,-10.25)(4,-6){2}{\oval(2,1.5)}
\put(2.5,-11.3){\makebox(0,0)[cc]{\underbar{\textbf{\(+14\,252\,156\)}}}}

\put(3,-15){\vector(-1,1){0.5}}
\put(4,-15.3){\makebox(0,0)[cc]{\underbar{\textbf{\(+174\,458\,681\)}}}}
\put(5,-15.5){\vector(1,-1){0.5}}

\multiput(2,-14.25)(4,-6){1}{\oval(1,1.5)}

\end{picture}

\end{figure}

}



\subsection{The 1\({}^{\text{st}}\) Excited State of Capitulation Type c.18}
\label{ss:c18ES1}

As before, \(K\) is an algebraic number field
with \(3\)-class group \(\mathrm{Cl}_3(K)\) of type \((3,3)\hat{=}1^2\).
Let \(L_1,\ldots,L_4\) be the four unramified cyclic cubic extensions of \(K\),
and suppose that the \(1^{\mathrm{st}}\) order Artin pattern \(\mathrm{AP}^{(1)}(K)\) of \(K\)
is given by the IPAD 
\(\tau^{(1)}(K)=\lbrack 1^2;\mathbf{3^2},1^3,21,21\rbrack\)
and the IPOD
\(\varkappa^{(1)}(K)=\lbrack G^\prime;\mathbf{G},H_1,H_2,H_2\rbrack\),
i.e. \(\tau_1(K)=((27,27),(3,3,3),(9,3),(9,3))\) are the type invariants of the \(3\)-class groups of the \(L_i\)
and \(\varkappa_1(K)=(0122)\) is the \(3\)-capitulation type of \(K\) in the \(L_i\).

\begin{proposition}
\label{prp:c18ES1}

(D.C. Mayer, \(2010\))

\begin{enumerate}

\item
The second \(3\)-class group \(\mathfrak{G}=\mathrm{G}_3^2(K)\) of \(K\)
is isomorphic to the metabelian \(3\)-group \(\langle 2187,285\rangle-\#1;1\)
and thus its relation rank satisfies the inequality
\(d_2(\mathfrak{G})\ge d_1(\mathfrak{G})+2\).

\item
Consequently,
if \(K\) is a number field with torsionfree unit rank \(r=1\)
and does not contain the third roots of unity
(in particular, if \(K=\mathbb{Q}(\sqrt{d})\) is a real quadratic field),
then the length of the \(3\)-class field tower of \(K\) must be \(\ell_3(K)\ge 3\).

\end{enumerate}

\end{proposition}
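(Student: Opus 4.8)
The plan is to run, \emph{mutatis mutandis}, the proof of Proposition \ref{prp:c18GS}, with the polarized transfer target component $2^2=(9,9)$ of order $3^4$ replaced throughout by the balanced component $3^2=(27,27)$ of order $3^6$; inside the coclass tree $\mathcal{T}^2(\langle 243,6\rangle)$ depicted in Figure \ref{fig:TreeOverviewQ} this just amounts to moving one full period (of length $2$) further down the infinite metabelian mainline, from order $3^6$ to order $3^8$. First I would show that $\mathfrak{G}=\mathrm{G}_3^2(K)$ has coclass $\mathrm{cc}(\mathfrak{G})=2$ by the identical dichotomy used there: the IPAD layer $\tau_1(\mathfrak{G})=\tau_1(K)=(3^2,1^3,(21)^2)$ contains neither three components of type $1^2$ (which would force $\mathrm{cc}(\mathfrak{G})=1$) nor two components of type $1^3$ (which would force $\mathrm{cc}(\mathfrak{G})\ge 3$), by items 1) and 3) of \cite[Thm.3.2, p.291]{Ma6}; alternatively, the IPOD $\varkappa_1(\mathfrak{G})=(0122)$ has exactly one total kernel and no $2$-cycle, which rules out $\mathrm{cc}(\mathfrak{G})=1$ by \cite[Thm.2.5, p.479]{Ma2} and $\mathrm{cc}(\mathfrak{G})\ge 3$ by \cite[Cor.3.0.2, p.772]{BuMa}.

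Next I would pin down $\mathfrak{G}$ itself. By item 2) of \cite[Thm.3.1, p.290]{Ma6} the polarized component $3^2$ of order $3^6$ of $\tau_1(\mathfrak{G})$ forbids $\mathfrak{G}$ from being a sporadic group outside the coclass-$2$ trees with metabelian mainline, and then item 2) of \cite[Thm.3.2, p.291]{Ma6} together with the three stable components $(1^3,(21)^2)$ forces $\mathfrak{G}$ to descend from $\langle 3^5,6\rangle$ rather than from $\langle 3^5,3\rangle$ or $\langle 3^5,8\rangle$. As in Proposition \ref{prp:c18GS}, the Artin pattern then acts as a coordinate system for this coclass tree: the polarization value $3^2$ fixes the nilpotency class $\mathrm{cl}(\mathfrak{G})=6$ and hence the order $\lvert\mathfrak{G}\rvert=3^8$ (a defect $k=1$ being possible only for type H.4), and the polarization $0$ of $\varkappa_1(\mathfrak{G})=(0122)$ with stable tail $(122)$ selects the unique mainline vertex of order $3^8$, which by Figure \ref{fig:TreeOverviewQ} is the immediate descendant $\langle 2187,285\rangle-\#1;1$ of $\langle 2187,285\rangle$. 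A MAGMA check \cite{MAGMA} confirms that this vertex still has $3$-multiplicator rank $\mathrm{MR}(\mathfrak{G})=4$, exactly as $\langle 729,49\rangle$, so $d_2(\mathfrak{G})\ge\mathrm{MR}(\mathfrak{G})=4=d_1(\mathfrak{G})+2$; this is part (1). Part (2) is then immediate from the corrected Shafarevich inequality of Theorem \ref{thm:Shafarevich}: for $G=\mathrm{G}_3^\infty(K)$ with torsionfree unit rank $r=1$ and $\zeta_3\notin K$ one gets $d_2(G)\le d_1(G)+r=2+1=3<4\le d_2(\mathfrak{G})$, whence $G\not\simeq\mathfrak{G}=G/G^{\prime\prime}$, so $G^{\prime\prime}\ne 1$, $\mathrm{dl}(G)\ge 3$, and therefore $\ell_3(K)=\mathrm{dl}(G)\ge 3$.

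The hard part will be the uniqueness in the middle step: one must be sure that the prescribed first-order Artin pattern is realized by exactly one metabelian $3$-group — namely $\langle 2187,285\rangle-\#1;1$ — and not by a sibling with the same IPAD or by a vertex on a different branch. This rests on the period-$2$ periodicity $\mathcal{B}(j)\simeq\mathcal{B}(j+2)$ of the branches of $\mathcal{T}^2(\langle 243,6\rangle)$ established by Nebelung and Ascione \cite{Ne,As}, which makes the pattern of TKTs and TTTs along the mainline repeat, so that the c.18 mainline vertex carrying a balanced polarized IPAD component is unique in each even-order branch, as Figure \ref{fig:TreeOverviewQ} makes visible. The accompanying computation $\mathrm{MR}(\mathfrak{G})=4$, slight as it looks, is exactly what lets the Shafarevich bound bite and so is indispensable for part (2).
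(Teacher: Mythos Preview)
Your proposal is correct and follows essentially the same approach as the paper: both argue \emph{mutatis mutandis} from Proposition~\ref{prp:c18GS}, replacing the polarized component $2^2$ by $3^2$ to force $\mathrm{cc}(\mathfrak{G})=2$, $\mathrm{cl}(\mathfrak{G})=6$, $\lvert\mathfrak{G}\rvert=3^8$, then using the TKT polarization $0$ to single out the mainline vertex $\langle 2187,285\rangle-\#1;1$, and finally invoking $\mathrm{MR}(\mathfrak{G})=4$ together with the Shafarevich bound for part~(2). The only cosmetic difference is that the paper cites Figure~\ref{fig:C18PrunedTreeQ} rather than Figure~\ref{fig:TreeOverviewQ} for the identification of the order-$3^8$ mainline vertex, and omits your closing paragraph on branch periodicity.
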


\begin{proof}

A great deal of the proof is similar to the proof of Proposition
\ref{prp:c18GS}.

\begin{enumerate}

\item
First, we prove that \(\mathfrak{G}\) must have coclass \(\mathrm{cc}(\mathfrak{G})=2\).
Next, we show that \(\mathfrak{G}\) must be a descendant of \(\langle 3^5,6\rangle\),
this time using the polarized component \(3^2\) of order \(3^6\) of \(\tau_1(\mathfrak{G})\).
Again, we have
\(d_2(\mathfrak{G})\ge\mathrm{MR}(\mathfrak{G})=4=d_1(\mathfrak{G})+2\),
since the \(p\)-multiplicator rank is a lower bound for the relation rank.
Finally, the polarization \(3^2\) of \(\tau_1(\mathfrak{G})=(3^2,1^3,(21)^2)\)
determines the nilpotency class \(\mathrm{cl}(\mathfrak{G})=6\)
and thus also the order \(\lvert\mathfrak{G}\rvert=3^8\).
The polarization \(0\) of \(\varkappa_1(\mathfrak{G})=(0122)\) with stable components \((122)\)
unambiguously identifies the mainline vertex of order \(3^8\),
which is \(\langle 2187,285\rangle-\#1;1\), according to Figure
\ref{fig:C18PrunedTreeQ}.

\item
As before,
\cite[Thm.6, p.140]{Sh}
implies that the relation rank \(d_2(G)\) of the \(3\)-class tower group
\(G:=\mathrm{Gal}(\mathrm{F}_3^\infty(K)\vert K)\)
of a number field \(K\) with unit rank \(r=r_1+r_2-1=1\) and \(\zeta_3\notin K\)
satisfies \(d_2(G)\le d_1(G)+r=2+1=3\),
whence \(G\not\simeq\mathfrak{G}\) and \(\ell_3(K)=\mathrm{dl}(G)\ge 3\).
\end{enumerate}
\end{proof}

\noindent
Suppose now that, under the assumptions preceding Proposition
\ref{prp:c18ES1},
\(G=\mathrm{G}_3^\infty(K)\) denotes the \(3\)-class tower group of \(K\) and
we are additionally given the \(2^{\mathrm{nd}}\) order IPAD of \(K\),
\[\tau^{(2)}(K)=
\lbrack 1^2;(\mathbf{3^2};\tau_1(L_1)),(1^3;\tau_1(L_2)),(21;\tau_1(L_3)),(21;\tau_1(L_4))\rbrack,\]
with fixed \(\tau_1(L_1)=((321)^4)\).

\begin{theorem}
\label{thm:c18ES1}

(D.C. Mayer, Aug. \(2015\))

\begin{enumerate}

\item
\(\tau_1(L_2)=(321,\mathbf{(1^3)^3},(1^2)^9)\) and \(\tau_1(L_i)=(321,\mathbf{(21)^3)}\) for \(i=3,4\)
\(\Longleftrightarrow\)\\
\(G\simeq\langle 3^7,285\rangle-\mathbf{\#1;1}\), of order \(\lvert G\rvert=3^8\),
or\\
\(G\simeq\langle 3^7,285\rangle-\mathbf{\#1;1-\#1;7}\), of order \(\lvert G\rvert=3^9\),

\item
\(\tau_1(L_2)=(321,\mathbf{(21^2)^3},(1^2)^9)\) and \(\tau_1(L_i)=(321,\mathbf{(31)^3})\) for \(i=3,4\)
\(\Longleftrightarrow\)\\
\(G\simeq\langle 3^6,49\rangle-\mathbf{\#2;1-\#1;1}\), of order \(\lvert G\rvert=3^9\),
or\\
\(G\simeq\langle 3^6,49\rangle-\mathbf{\#2;1-\#1;1-\#1;1}\), of order \(\lvert G\rvert=3^{10}\),
or\\
\(G\simeq\langle 3^6,49\rangle-\mathbf{\#2;1-\#1;1-\#1;8}\), of order \(\lvert G\rvert=3^{10}\).

\end{enumerate}

\end{theorem}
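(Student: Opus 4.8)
The argument will parallel that of Theorem~\ref{thm:c18GS}, carried out one level higher on the pruned tree of Figure~\ref{fig:C18PrunedTreeQ}. By Proposition~\ref{prp:c18ES1} the assumed first-order Artin pattern forces \(\mathfrak{G}=\mathrm{G}_3^2(K)\simeq\langle 2187,285\rangle-\#1;1\), of order \(3^8\); since the \(3\)-tower group \(G=\mathrm{G}_3^\infty(K)\) satisfies \(G/G''\simeq\mathfrak{G}\), it lies in the cover \(\mathrm{cov}(\mathfrak{G})\), whose unique metabelian member is \(\mathfrak{G}\) itself. (For real quadratic \(K\) one moreover has \(G\neq\mathfrak{G}\), because the corrected Shafarevich bound of Theorem~\ref{thm:Shafarevich} and Definition~\ref{dfn:ShafarevichCover} confine \(G\) to \(\mathrm{cov}_K(\mathfrak{G})=\mathrm{cov}_1(\mathfrak{G})\), whereas \(d_2(\mathfrak{G})\ge d_1(\mathfrak{G})+2\); but this refinement is not needed for the equivalences themselves.) The plan is to determine \(\mathrm{cov}(\mathfrak{G})\) explicitly, which will turn out to be finite, and to compute the iterated IPAD of second order of each of its members.

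First I would run the \(p\)-group generation algorithm of Newman \cite{Nm} and O'Brien \cite{Ob}, in its MAGMA implementation \cite{MAGMA}, to construct the relevant part of the pruned descendant tree \(\mathcal{T}_\ast(\langle 243,6\rangle)\) of Figure~\ref{fig:C18PrunedTreeQ} down to a finite order, retaining all periodic bifurcations but removing every vertex whose transfer kernel type differs from c.18; this sweeps out both the coclass-\(2\) mainline continuing through \(\langle 2187,285\rangle-\#1;1\) and the coclass-\(3\) subtree rooted at \(\langle 729,49\rangle-\#2;1\). In the same recursion I would compute \(\tau^{(2)}(V)\) for every vertex \(V\) and, for each non-metabelian vertex \(H\), form the second derived quotient \(H/H''\) and test whether \(H/H''\simeq\mathfrak{G}\); the vertices passing this test, together with \(\mathfrak{G}\) itself, are exactly the candidates for \(G\), and their nilpotency classes and derived lengths are recorded along the way.

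The decisive step is \emph{termination}: one must show that the construction may be stopped at a finite order without overlooking any candidate. As in Theorem~\ref{thm:c18GS}, the component \(\tau_1(L_1)\) becomes stable, equal to \(((321)^4)\), while the remaining components grow deterministically along each branch: the central block of \(\tau_1(L_2)\) is \((1^3)^3\) for vertices of coclass \(2\) and \((21^2)^3\) for vertices of coclass \(3\), and \(\tau_1(L_3)=\tau_1(L_4)\) acquires \((21)^3\) resp. \((31)^3\) in the two cases. This behaviour is forced by the periodicity \(\mathcal{B}(j)\simeq\mathcal{B}(j+2)\) of the branches for \(j\ge 7\). Granting it, the forward implication of each equivalence follows by exhausting the finite pruned tree, and the backward implication by computing \(\tau^{(2)}\) of the five groups named in the statement directly in MAGMA. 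One then obtains the two equivalences, the values \(\mathrm{dl}(G)=3\) together with the stated nilpotency classes, and in passing the explicit cover \(\mathrm{cov}(\langle 2187,285\rangle-\#1;1)\), consisting of \(\mathfrak{G}\) and the four non-metabelian groups listed in parts~(1) and~(2).

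The step I expect to be the main obstacle is precisely this termination/exhaustiveness claim. A finite MAGMA run only certifies the IPAD pattern on the finitely many branches actually built; turning it into a proof requires showing that the non-stabilized components grow monotonically and in the same deterministic fashion on \emph{every} periodic branch, so that no vertex of order beyond the chosen bound can reproduce the prescribed second-order IPAD. It is this combination of branch periodicity with monotone deterministic growth, rather than any single computation, that carries the real weight and would need to be argued with care.
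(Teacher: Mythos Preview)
Your proposal is correct and follows essentially the same approach as the paper: construct the pruned descendant tree \(\mathcal{T}_\ast(\langle 243,6\rangle)\) via the \(p\)-group generation algorithm, compute \(\tau^{(2)}(V)\) at each vertex, test \(H/H''\simeq\mathfrak{G}\) for non-metabelian \(H\) to determine the cover, and read off the two batches from the resulting IPAD data. If anything, your discussion of the termination issue is more explicit than the paper's own proof, which simply refers back to the deterministic-growth argument of Theorem~\ref{thm:c18GS} and then records the cover and the resulting two-versus-three partition.
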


\begin{proof}
Similar as in the proof of Theorem
\ref{thm:c18GS},
we construct the descendant tree \(\mathcal{T}(R)\) of the root \(R=\langle 243,6\rangle\),
determine the iterated IPAD of second order \(\tau^{(2)}(V)\) for each vertex \(V\),
and check the second derived quotient \(H/H^{\prime\prime}\) of non-metabelian vertices \(H\).

It turns out that the cover of the common metabelianization
\(\mathfrak{G}=\langle 3^7,285\rangle-\#1;1=G/G^{\prime\prime}\)
of all candidates \(G\) for the \(3\)-tower group is given by
\(\mathrm{cov}(\langle 3^7,285\rangle-\#1;1)=\)\\
\(\lbrace\langle 3^7,285\rangle-\#1;1,\ \langle 3^7,285\rangle-\#1;1-\#1;7,\)\\
\(\langle 3^6,49\rangle-\#2;1-\#1;1,\ \langle 3^6,49\rangle-\#2;1-\#1;1-\#1;1,\ \langle 3^6,49\rangle-\#2;1-\#1;1-\#1;8\rbrace\).

The various vertices 
are not characterized uniquely by their iterated IPAD of second order.
Rather they can be identified as batches of two resp. three vertices
in the claimed manner.
\end{proof}

\begin{corollary} 
\label{cor:c18ES1}

(D.C. Mayer, Aug. \(2015\))\\
If \(K\) has torsionfree unit rank \(1\) and does not contain a primitive third root of unity, then

\begin{enumerate}

\item
\(\tau_1(L_2)=(321,\mathbf{(1^3)^3},(1^2)^9)\) and \(\tau_1(L_i)=(321,\mathbf{(21)^3)}\) for \(i=3,4\)
\(\Longleftrightarrow\)\\
\(G\simeq\langle 3^7,285\rangle-\mathbf{\#1;1-\#1;7}\).

\item
\(\tau_1(L_2)=(321,\mathbf{(21^2)^3},(1^2)^9)\) and \(\tau_1(L_i)=(321,\mathbf{(31)^3})\) for \(i=3,4\)
\(\Longleftrightarrow\)\\
\(G\simeq\langle 3^6,49\rangle-\mathbf{\#2;1-\#1;1-\#1;1}\) or\\
\(G\simeq\langle 3^6,49\rangle-\mathbf{\#2;1-\#1;1-\#1;8}\).

\end{enumerate}

\noindent
All these groups have derived length \(\mathrm{dl}(G)=3\).

\end{corollary}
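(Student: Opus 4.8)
The plan is to obtain Corollary~\ref{cor:c18ES1} by combining Theorem~\ref{thm:c18ES1} with the relation-rank constraint of the corrected Shafarevich Theorem~\ref{thm:Shafarevich}. If $K$ has torsionfree Dirichlet unit rank $r=1$ and $\zeta_3\notin K$, then for the $3$-tower group $G=\mathrm{G}_3^\infty(K)$ one has $d_1(G)=d_1(G/G^\prime)=d_1(\mathrm{Cl}_3(K))=2$, so Theorem~\ref{thm:Shafarevich} (the branch with $\zeta\notin k$, applied to the empty set of places) yields $d_2(G)\le d_1(G)+r=3$, exactly as in Proposition~\ref{prp:c18ES1}(2). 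Hence, of the five members of the cover $\mathrm{cov}(\langle 3^7,285\rangle-\#1;1)$ produced in the proof of Theorem~\ref{thm:c18ES1}, only those with relation rank at most $3$ are eligible to be $G$.

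Next I would discard the two ineligible vertices. The metabelian vertex $\mathfrak{G}=\langle 3^7,285\rangle-\#1;1$ is ruled out at once, since Proposition~\ref{prp:c18ES1}(1) records $d_2(\mathfrak{G})\ge\mathrm{MR}(\mathfrak{G})=d_1(\mathfrak{G})+2=4$. The second vertex to be removed is $\langle 3^6,49\rangle-\#2;1-\#1;1$ of order $3^9$, the not coclass-settled vertex carrying the second periodic bifurcation in Figure~\ref{fig:C18PrunedTreeQ}: the MAGMA computation run alongside the descendant-tree construction in the proof of Theorem~\ref{thm:c18ES1} shows that its $p$-multiplicator rank equals $4$, so $d_2\ge 4>3$ and it cannot be $G$. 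For each of the three remaining candidates, namely $\langle 3^7,285\rangle-\#1;1-\#1;7$ of order $3^9$ and $\langle 3^6,49\rangle-\#2;1-\#1;1-\#1;i$ of order $3^{10}$ for $i\in\{1,8\}$, the same computation gives $p$-multiplicator rank $3$, which together with the Shafarevich bound forces $d_2=3$; thus all three satisfy $d_1(G)\le d_2(G)\le d_1(G)+r$ and remain admissible.

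Feeding this back into Theorem~\ref{thm:c18ES1} finishes the proof. In its case~(1) the two-element candidate set collapses, under $d_2(G)\le 3$, to $\{\langle 3^7,285\rangle-\#1;1-\#1;7\}$, and in its case~(2) the three-element candidate set collapses to $\{\langle 3^6,49\rangle-\#2;1-\#1;1-\#1;1,\ \langle 3^6,49\rangle-\#2;1-\#1;1-\#1;8\}$; these are precisely the two equivalences asserted, the implications $\Longleftarrow$ being inherited verbatim from Theorem~\ref{thm:c18ES1}. Finally each surviving group is non-metabelian, so $\mathrm{dl}(G)\ge 3$, while $G^{\prime\prime\prime}=1$ is confirmed by the same MAGMA run (inspecting second derived subgroups); hence $\mathrm{dl}(G)=3$ and therefore $\ell_3(K)=\mathrm{dl}(G)=3$.

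The only delicate point, and it is purely computational, is the relation-rank bookkeeping of the middle step: one must be certain that the cover really stabilizes at the five listed groups (no further descendant $H$ with $H/H^{\prime\prime}\simeq\mathfrak{G}$ appears at higher order), and that the $p$-multiplicator rank drops to $3$ at the three leaf-type vertices but stays at $4$ at the capable vertex $\langle 3^6,49\rangle-\#2;1-\#1;1$. The subtlety is that this excluded vertex and the admitted vertex $\langle 3^7,285\rangle-\#1;1-\#1;7$ have the \emph{same} order $3^9$, so the separation is genuinely forced by the relation rank rather than by any coarser invariant; everything else is a mechanical combination of Theorems~\ref{thm:Shafarevich} and~\ref{thm:c18ES1} with Proposition~\ref{prp:c18ES1}.
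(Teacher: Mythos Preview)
Your argument is correct and follows essentially the same route as the paper's own proof: eliminate the two infinitely capable vertices \(\langle 3^7,285\rangle-\#1;1\) and \(\langle 3^6,49\rangle-\#2;1-\#1;1\) from the cover listed in Theorem~\ref{thm:c18ES1} by observing that their \(p\)-multiplicator rank is \(4\), whence \(d_2\ge 4\) violates the Shafarevich bound \(d_2\le d_1+r=3\). Your write-up is in fact more detailed than the paper's, which does not explicitly verify that the three surviving groups have \(\mathrm{MR}=3\) nor spell out the \(\mathrm{dl}(G)=3\) conclusion; these additions are welcome but do not change the underlying strategy.
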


\begin{proof}
The two infinitely capable groups
\(\langle 3^7,285\rangle-\mathbf{\#1;1}\)
and
\(\langle 3^6,49\rangle-\mathbf{\#2;1-\#1;1}\)
have \(p\)-multiplicator rank \(\mathrm{MR}(G)=4\) and
thus relation rank \(d_2(G)\ge 4\),
and consequently cannot satisfy the Shafarevich inequality \(d_2(G)\le d_1(G)+r=2+1=3\)
\cite[Thm.6, p.140]{Sh}
for a field \(K\) with unit rank \(r=r_1+r_2-1=1\) and \(\zeta_3\notin K\).
\end{proof}



\subsection{Real Quadratic Fields of Type c.18\(\uparrow\)}
\label{ss:RQFc18ES1}

\begin{proposition}
\label{prp:RQFc18ES1}

(M.R. Bush, Jul. \(2015\))\\
In the range \(0<d<10^8\) of fundamental discriminants \(d\)
of real quadratic fields \(K=\mathbb{Q}(\sqrt{d})\)
there exist precisely \(\mathbf{8}\) cases with \(1^{\mathrm{st}}\) IPAD
\(\tau^{(1)}(K)=\lbrack 1^2;\mathbf{3^2},1^3,(21)^2\rbrack\).

\end{proposition}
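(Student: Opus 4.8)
The plan is to establish the count by an exhaustive but carefully staged computation, in which the expensive step --- determining $3$-class groups of sextic fields --- is carried out only for the small set of discriminants that survive a cheap preliminary filter. First I would enumerate all fundamental discriminants $d$ with $0<d<10^8$ and compute the $3$-class group $\mathrm{Cl}_3(K)$ of $K=\mathbb{Q}(\sqrt{d})$ for each. By the Artin reciprocity law recalled in Section~\ref{s:ArtinPattern}, an IPAD of the shape $\lbrack 1^2;\ast,\ast,\ast,\ast\rbrack$ with four entries in the first layer forces $\mathrm{Cl}_3(K)\simeq(3,3)$, so the first filter retains exactly those $d$ for which the $3$-part of the class number is $9$ and the $3$-class group is non-cyclic. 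This is fast --- for instance with PARI/GP, as in the proof of Proposition~\ref{prp:RQFc18GS} --- and leaves a manageable list of candidates.

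For each surviving $K$ I would then construct the four unramified cyclic cubic extensions $L_1,\dots,L_4$, which by class field theory are the fixed fields of the four subgroups of index $3$ in $\mathrm{Cl}_3(K)\simeq(3,3)$; equivalently, each $L_i$ is the Galois closure of a non-cyclic cubic field of discriminant $d$, a dihedral sextic field in which the computation can actually be performed. For each $i$ I would compute the abelian type invariants of $\mathrm{Cl}_3(L_i)$, assemble the first-order IPAD $\tau^{(1)}(K)=\lbrack 1^2;\tau_1(L_1),\dots,\tau_1(L_4)\rbrack$ in the accumulated sense of Remark~\ref{rmk:TTT}, and keep precisely those $K$ whose multiset of first-layer types equals $\lbrace(27,27),(3,3,3),(9,3),(9,3)\rbrace$, i.e.\ $\lbrack 1^2;\mathbf{3^2},1^3,(21)^2\rbrack$ --- taking care to separate this \emph{first excited state} from the far more common \emph{ground state} $\lbrack 1^2;\mathbf{2^2},1^3,(21)^2\rbrack$ of Proposition~\ref{prp:RQFc18GS}, which differs only in the polarized component. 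Counting the resulting list then yields the asserted value $8$.

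The main obstacle is the computational feasibility of the second stage: it requires class-group computations in number fields of degree $6$ over $\mathbb{Q}$ and, although the candidate list has been thinned by the first filter, it is still large for $d$ up to $10^8$, so the run is only practical using subexponential class-group algorithms (hence, for a verified computation, under GRH) together with a high-performance implementation such as MAGMA or PARI/GP. A secondary but genuine point of care is bookkeeping: one must ensure that for every candidate $K$ all four extensions $L_i$ are found --- so that the IPAD is complete and not truncated --- and that the ordering and accumulation conventions for the components of $\tau_1$ agree with those fixed in Section~\ref{s:ArtinPattern}, so that it is precisely the excited-state IPAD, and not some neighbouring pattern, that is being counted.
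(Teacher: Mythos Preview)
Your proposal is correct and takes essentially the same approach as the paper: an exhaustive computational search, filtering first for \(\mathrm{Cl}_3(K)\simeq(3,3)\) and then computing the \(3\)-class groups of the four unramified cyclic cubic extensions, implemented in PARI/GP and cross-checked in MAGMA. The paper's own proof is in fact far terser --- it merely records that the computation was carried out by M.R.~Bush using PARI/GP with the techniques of \cite[\S\,5]{Ma3} and double-checked with MAGMA --- so your write-up supplies more algorithmic detail than the original, but the method is the same.
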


\begin{proof}
The results were communicated to us on July 11, 2015, by M.R. Bush, who used PARI/GP
\cite{PARI}
with similar techniques as described in our paper
\cite[\S\ 5, pp.446--450]{Ma3},
and additionally double-checked with MAGMA
\cite{MAGMA}.
\end{proof}

\begin{corollary}
\label{cor:RQFc18ES1}

(D.C. Mayer, \(2010\))\\
A quadratic field \(K=\mathbb{Q}(\sqrt{d})\)
with
\(\tau^{(1)}(K)=\lbrack 1^2;\mathbf{3^2},1^3,(21)^2\rbrack\)
must be a real quadratic field
with \(3\)-capitulation type \(\varkappa_1(K)=(0122)\).

\end{corollary}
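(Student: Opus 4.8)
The plan is to determine the second $3$-class group $\mathfrak{G}=\mathrm{G}_3^2(K)$ from the prescribed first-order IPAD, read off its transfer kernel type, and then use the capitulation obstruction for complex quadratic fields to conclude that $K$ is real.

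First I would observe that $\tau^{(1)}(K)=\lbrack 1^2;\mathbf{3^2},1^3,(21)^2\rbrack$ is exactly the IPAD hypothesized in Proposition \ref{prp:c18ES1}, so the portion of its proof that uses only the IPAD carries over verbatim to the metabelianization $\mathfrak{G}=\mathrm{G}_3^2(K)$: the three stable TTT components $1^3,21,21$ rule out, by the structure theorems of \cite{Ma6}, the coclass trees with roots $\langle 3^5,3\rangle$ and $\langle 3^5,8\rangle$ together with the sporadic $3$-groups, so $\mathfrak{G}$ is a vertex of the coclass tree $\mathcal{T}^2(\langle 243,6\rangle)$ of Figure \ref{fig:TreeOverviewQ}; and the polarized component $\mathbf{3^2}=(27,27)$ of order $3^6$ forces defect of commutativity $k(\mathfrak{G})=0$, nilpotency class $\mathrm{cl}(\mathfrak{G})=6$, and order $\lvert\mathfrak{G}\rvert=3^8$.

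The crucial and most delicate step is to show that $\varkappa_1(\mathfrak{G})=\mathrm{c}.18=(0122)$. This cannot be extracted from the IPAD by group theory alone, since the defect-$0$ vertices of $\mathcal{T}^2(\langle 243,6\rangle)$ of order $3^8$ bearing the transfer kernel types c.18, E.6, E.14 and H.4 all carry the identical IPAD $\lbrack 1^2;\mathbf{3^2},1^3,(21)^2\rbrack$. Here I would invoke the selection rules of \S\ \ref{s:Constraints}: the $3$-class number relations for the four unramified cyclic cubic extensions $L_1,\ldots,L_4$ of $K$, together with the fact that a second $3$-class group of a quadratic field must admit a generator-inverting automorphism, imply (cf. \cite{Ma2}) that a quadratic field whose second $3$-class group is a defect-$0$ vertex of even nilpotency class on this tree necessarily lies on the mainline, the non-mainline types E.6, E.14, H.4 being realized by quadratic fields only at odd class. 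Since the mainline vertex of order $3^8$ is $\langle 2187,285\rangle-\#1;1$, we conclude $\mathfrak{G}\simeq\langle 2187,285\rangle-\#1;1$ and $\varkappa_1(K)=\varkappa_1(\mathfrak{G})=(0122)$.

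Finally, realness follows at once: a complex quadratic field $k=\mathbb{Q}(\sqrt{d})$ with $d<0$ and $\mathrm{Cl}_3(k)\simeq(3,3)$ has capitulation number $\nu=\#\lbrace i\mid\varkappa_1(i)=0\rbrace=0$, that is, no total transfer kernel can occur (cf. \cite{CgFt,Ma1} and the last item of the list in \S\ \ref{s:Constraints}), whereas the type $(0122)$ has exactly one total kernel, $\nu=1$. Hence $d$ cannot be negative, so $K=\mathbb{Q}(\sqrt{d})$ with $d>0$ is a real quadratic field. The main obstacle, as indicated, is the identification of the transfer kernel type from the IPAD: the purely group-theoretic data $\tau^{(1)}(K)$ is insufficient, and it is the arithmetic of $K$ — the $3$-class number relations and the generator-inverting automorphism — that singles out type c.18 and with it forces $d>0$.
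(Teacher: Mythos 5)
The paper's own proof of this corollary is a one-line citation of item (1) of \cite[Cor.4.4.3, p.442]{Ma3}, so your attempt at a self-contained argument is necessarily a different route. Your last step is correct and is the essential arithmetic point: once \(\varkappa_1(K)=(0122)\) is established, the presence of a total transfer kernel (the component \(0\)) is incompatible with a complex quadratic base field, for which the capitulation number is \(\nu=0\) \cite{CgFt,Ma1}, whence \(d>0\). Your reduction to the coclass tree \(\mathcal{T}^2(\langle 243,6\rangle)\) with polarized component \(3^2\) also agrees with the paper's Proposition \ref{prp:c18ES1}.

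The gap lies in the middle step, the identification of the TKT from the IPAD. First, the IPAD does not by itself force defect \(k(\mathfrak{G})=0\): a defect-\(1\) vertex of order \(3^9\) on the same tree has its polarized TTT component lagging one level behind and therefore carries the identical IPAD \(\lbrack 1^2;3^2,1^3,(21)^2\rbrack\); on \(\mathcal{T}^2(\langle 243,6\rangle)\) every such vertex has type H.4 \(=(2122)\), which has no total kernel and which \emph{is} realized by complex quadratic fields (Figure \ref{fig:TreeOverviewQ}). Excluding these vertices, together with the defect-\(0\) vertices of types E.6, E.14 and H.4 at order \(3^8\), is therefore the entire content of the corollary, not a cosmetic refinement. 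Second, the exclusion you offer rests on the assertion that a quadratic field whose second \(3\)-class group is a defect-\(0\) vertex of even nilpotency class on this tree must lie on the mainline, supported only by an appeal to unspecified selection rules and the GI-automorphism. That assertion is essentially a reformulation of the statement of \cite[Cor.4.4.3]{Ma3} which is being proved; as written, the hard part is assumed rather than established. To close the gap you would need to actually derive the parity selection rule (from the \(3\)-class number relations of \cite{Ma1} or Nebelung's classification \cite{Ne}) showing that for quadratic base fields the types E.6, E.14 and H.4 occur only with non-square polarized component \((32),(43),\ldots\), never with \((3^2)\).
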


\begin{proof}
This is a consequence of item (1) in
\cite[Cor.4.4.3, p.442]{Ma3}.
\end{proof}

\begin{theorem} 
\label{thm:RQFc18ES1}

(D.C. Mayer, Aug. \(2015\))

\begin{enumerate}

\item
The \(\mathbf{4}\) real quadratic fields \(K=\mathbb{Q}(\sqrt{d})\) with the following discriminants \(d\) (\(\mathbf{50}\%\) of 8),

\begin{center}
\(13\,714\,789,\ 24\,037\,912,\ 54\,683\,977,\ 94\,272\,565,\)  
\end{center}

\noindent
have \(3\)-class tower group
\(G\simeq\langle 3^7,285\rangle-\mathbf{\#1;1-\#1;7}\).

\item
The \(\mathbf{4}\) real quadratic fields \(K=\mathbb{Q}(\sqrt{d})\) with the following discriminants \(d\) (\(\mathbf{50}\%\) of 8),

\begin{center}
\(14\,252\,156,\ 46\,748\,181,\ 67\,209\,369,\ 78\,200\,897,\)
\end{center}

\noindent
have \(3\)-class tower group either\\
\(G\simeq\langle 3^6,49\rangle-\mathbf{\#2;1-\#1;1-\#1;1}\) or\\
\(G\simeq\langle 3^6,49\rangle-\mathbf{\#2;1-\#1;1-\#1;8}\).

\end{enumerate}

\noindent
In each case, the length of the \(3\)-class tower of \(K\) is given by \(\ell_3(K)=3\).

\end{theorem}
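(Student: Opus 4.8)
The plan is to run exactly the procedure used in the proof of Theorem~\ref{thm:RQFc18GS}, but now feeding it the second-order Artin pattern data of the eight discriminants listed above and closing the argument with Corollary~\ref{cor:c18ES1} in place of Theorem~\ref{thm:c18GS}.

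First I would note that, by Proposition~\ref{prp:RQFc18ES1}, the eight discriminants occurring in items (1) and (2) are precisely the fundamental discriminants $0<d<10^8$ for which $K=\mathbb{Q}(\sqrt{d})$ has first-order IPAD $\tau^{(1)}(K)=\lbrack 1^2;\mathbf{3^2},1^3,(21)^2\rbrack$. By Corollary~\ref{cor:RQFc18ES1} each such $K$ is real quadratic with $3$-capitulation type $\varkappa_1(K)=(0122)$, so the hypotheses preceding Proposition~\ref{prp:c18ES1} all hold and, by that proposition, $\mathfrak{G}=\mathrm{G}_3^2(K)\simeq\langle 3^7,285\rangle-\#1;1$. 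Since $K$ is real quadratic it has torsionfree unit rank $r=1$ and does not contain a primitive third root of unity, so the hypotheses of Corollary~\ref{cor:c18ES1} are satisfied as well.

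Next I would compute, for each of the eight fields, the four unramified cyclic cubic extensions $L_1,\ldots,L_4$ of $K$ and, for each $L_i$, the type invariants $\tau_1(L_i)$ of the $3$-class groups of the unramified cyclic cubic extensions of $L_i$; assembling these gives the iterated IPAD of second order $\tau^{(2)}(K)$. One verifies that $\tau_1(L_1)=((321)^4)$ in every case, the normalization under which Theorem~\ref{thm:c18ES1} is stated, and that the remaining entries split the eight fields into two batches of four: for the first batch $\tau_1(L_2)=(321,\mathbf{(1^3)^3},(1^2)^9)$ and $\tau_1(L_3)=\tau_1(L_4)=(321,\mathbf{(21)^3})$, while for the second batch $\tau_1(L_2)=(321,\mathbf{(21^2)^3},(1^2)^9)$ and $\tau_1(L_3)=\tau_1(L_4)=(321,\mathbf{(31)^3})$. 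Corollary~\ref{cor:c18ES1} then identifies $G=\mathrm{G}_3^\infty(K)$ as $\langle 3^7,285\rangle-\#1;1-\#1;7$ for the first batch, and as $\langle 3^6,49\rangle-\#2;1-\#1;1-\#1;1$ or $\langle 3^6,49\rangle-\#2;1-\#1;1-\#1;8$ for the second batch. Finally, since each of these groups has derived length $3$ as recorded in Corollary~\ref{cor:c18ES1}, the length of the $3$-class tower is $\ell_3(K)=\mathrm{dl}(G)=3$.

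The step I expect to be the main obstacle is the computation of $\tau^{(2)}(K)$: its entries demand the $3$-class groups of the unramified cyclic cubic \emph{relative} extensions of the $L_i$, hence of absolute number fields of degree $18$, and for $L_2$ (whose $3$-class group has rank $3$) there are thirteen of these to process for each discriminant. These class-group computations, carried out in PARI/GP with the techniques of \cite[\S\ 5, pp.446--450]{Ma3} and cross-checked in MAGMA, are the genuinely expensive part; once the second-order Artin pattern is in hand, the conclusion is a direct appeal to Corollary~\ref{cor:c18ES1}.
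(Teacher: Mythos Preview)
Your proposal is correct and follows essentially the same route as the paper: verify that each of the eight fields has capitulation type $(0122)$ and first IPAD $\lbrack 1^2;\mathbf{3^2},1^3,(21)^2\rbrack$, compute the second-order IPAD to separate the two batches, and then invoke Corollary~\ref{cor:c18ES1}. The paper's own proof is considerably terser (it simply asserts the fields have the required $\varkappa_1$, $\tau^{(1)}$, and ``suitable'' $\tau^{(2)}$, then cites Corollary~\ref{cor:c18ES1}), but your more explicit unpacking of the hypotheses via Proposition~\ref{prp:RQFc18ES1}, Corollary~\ref{cor:RQFc18ES1}, and Proposition~\ref{prp:c18ES1} is entirely in line with what the paper intends.
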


\begin{proof}
Since all these real quadratic fields \(K=\mathbb{Q}(\sqrt{d})\) have
\(3\)-capitulation type \(\varkappa_1(K)=(0122)\), \(1^{\mathrm{st}}\) IPAD  
\(\tau^{(1)}(K)=\lbrack 1^2;\mathbf{3^2},1^3,(21)^2\rbrack\)
and suitable \(2^{\mathrm{nd}}\) IPAD,
the claim is a consequence of Corollary
\ref{cor:c18ES1}.
\end{proof}

\begin{remark}
\label{rmk:RQFc18ES1}
Of course, the percentages given in Theorem
\ref{thm:RQFc18ES1}
are unable to predict reliable tendencies for extensive statistical ensembles.
The results of M.R. Bush show that there are \(138\) real quadratic fields
with discriminants \(0<d<10^9\) having the IPAD in Proposition
\ref{prp:RQFc18ES1}.
For the remaining \(130\) cases outside of the range \(0<d<10^8\),
we cannot specify the \(3\)-tower group,
since the computation would require too much CPU time.
However, according to Proposition
\ref{prp:c18ES1}
and Corollary
\ref{cor:c18ES1},
we can be sure that the length of the \(3\)-tower is exactly \(\ell_3(K)=3\). 
\end{remark}



\subsection{The 2\({}^{\text{nd}}\) Excited State of Capitulation Type c.18}
\label{ss:c18ES2}

As before, \(K\) is an algebraic number field
with \(3\)-class group \(\mathrm{Cl}_3(K)\) of type \((3,3)\hat{=}1^2\).
Let \(L_1,\ldots,L_4\) be the four unramified cyclic cubic extensions of \(K\),
and suppose that the \(1^{\mathrm{st}}\) order Artin pattern \(\mathrm{AP}^{(1)}(K)\) of \(K\)
is given by the IPAD 
\(\tau^{(1)}(K)=\lbrack 1^2;\mathbf{4^2},1^3,21,21\rbrack\)
and the IPOD
\(\varkappa^{(1)}(K)=\lbrack G^\prime;\mathbf{G},H_1,H_2,H_2\rbrack\),
i.e. \(\tau_1(K)=((81,81),(3,3,3),(9,3),(9,3))\) are the type invariants of the \(3\)-class groups of the \(L_i\)
and \(\varkappa_1(K)=(0122)\) is the \(3\)-capitulation type of \(K\) in the \(L_i\).

\begin{proposition}
\label{prp:c18ES2}

(D.C. Mayer, \(2010\))

\begin{enumerate}

\item
The second \(3\)-class group \(\mathfrak{G}=\mathrm{G}_3^2(K)\) of \(K\)
is isomorphic to the metabelian \(3\)-group \(\langle 2187,285\rangle(-\#1;1)^3\)
and thus its relation rank satisfies the inequality
\(d_2(\mathfrak{G})\ge d_1(\mathfrak{G})+2\).

\item
Consequently,
if \(K\) is a number field with torsionfree unit rank \(r=1\)
and does not contain the third roots of unity
(in particular, if \(K=\mathbb{Q}(\sqrt{d})\) is a real quadratic field),
then the length of the \(3\)-class field tower of \(K\) must be \(\ell_3(K)\ge 3\).

\end{enumerate}

\end{proposition}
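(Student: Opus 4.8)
The plan is to follow the two preceding proofs (Propositions~\ref{prp:c18GS} and~\ref{prp:c18ES1}) almost verbatim, adjusting only the numerical invariants that change with the larger polarized component $4^2$. First I would show $\mathrm{cc}(\mathfrak{G})=2$ by either of the two independent arguments already used: via $\tau_1(\mathfrak{G})=(4^2,1^3,(21)^2)$, items~1) and~3) of \cite[Thm.3.2, p.291]{Ma6} rule out coclass~$1$ (which would require three components of type~$1^2$) and coclass~$\ge 3$ (which would require two components of type~$1^3$); alternatively, via $\varkappa_1(\mathfrak{G})=(0122)$, the results \cite[Thm.2.5, p.479]{Ma2} and \cite[Cor.3.0.2, p.772]{BuMa} exclude coclass~$1$ (three total kernels) and coclass~$\ge 3$ (a $2$-cycle in the TKT).

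Next I would locate the coclass tree: by item~2) of \cite[Thm.3.1, p.290]{Ma6} the polarized component $4^2$ of order~$3^8$ cannot occur for a sporadic group, so $\mathfrak{G}$ lies on one of the three coclass-$2$ trees with metabelian mainline rooted at $\langle 3^5,3\rangle$, $\langle 3^5,6\rangle$, $\langle 3^5,8\rangle$; the three stable TTT components $((1^3)^2,21)$, $(1^3,(21)^2)$, $((21)^3)$ of these trees, compared with $\tau_1(\mathfrak{G})=(4^2,1^3,(21)^2)$, single out the tree of $\langle 3^5,6\rangle$, and in particular $d_2(\mathfrak{G})\ge\mathrm{MR}(\mathfrak{G})=4=d_1(\mathfrak{G})+2$. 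Then, reading the Artin pattern as a coordinate system on that tree: the polarization $4^2$ of $\tau_1(\mathfrak{G})$, being of order~$3^8$ (two mainline steps above the $3^2$ of Proposition~\ref{prp:c18ES1}), forces $\mathrm{cl}(\mathfrak{G})=8$ and hence $\lvert\mathfrak{G}\rvert=3^{10}$, the defect-$1$ alternative being excluded by the TKT (defect~$1$ occurs only for type~H.4), while the stable components $(122)$ of $\varkappa_1(\mathfrak{G})=(0122)$ pick out the unique mainline vertex of order~$3^{10}$, which Figure~\ref{fig:C18PrunedTreeQ} identifies as $\langle 2187,285\rangle(-\#1;1)^3$. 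Part~(2) then follows exactly as before: by \cite[Thm.6, p.140]{Sh}, a field $K$ with $r=r_1+r_2-1=1$ and $\zeta_3\notin K$ satisfies $d_2(G)\le d_1(G)+r=2+1=3$ for its $3$-tower group $G$, so $G\not\simeq\mathfrak{G}$ and $\ell_3(K)=\mathrm{dl}(G)\ge 3$.

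The one step deserving genuine care is the class/order determination: one must confirm that the polarization $4^2$ of order~$3^8$ pins the class to exactly~$8$ (hence $\lvert\mathfrak{G}\rvert=3^{10}$), which amounts to checking that the relevant periodic branch of the coclass tree of $\langle 3^5,6\rangle$ behaves as displayed --- the periodicity of length~$2$ sets in with $\mathcal{B}(7)$, so order~$3^{10}$ is well inside the periodic regime --- and that the mainline vertex at that level has defect~$0$. Everything else is a faithful transcription of the $n=2$ and $n=3$ cases, resting on item~2) of \cite[Thm.3.2, p.291]{Ma6} and Figure~\ref{fig:C18PrunedTreeQ}, which one should double-check extends far enough up the mainline to cover order~$3^{10}$.
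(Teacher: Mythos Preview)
Your proposal is correct and follows essentially the same approach as the paper's own proof, which explicitly says ``a great deal of the proof is similar to the proof of Proposition~\ref{prp:c18GS}'' and then sketches exactly the steps you give. Your write-up is in fact more careful than the paper's: you spell out both independent arguments for $\mathrm{cc}(\mathfrak{G})=2$, you retain the defect-$1$ exclusion (only type~H.4), and you correctly record the order of the polarized component $4^2$ as $3^8$ (the paper's proof contains a copy-paste slip, writing ``$3^6$'' and ``$\tau_1(\mathfrak{G})=(3^2,1^3,(21)^2)$'' carried over from the first excited state).
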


\begin{proof}

Again, a great deal of the proof is similar to the proof of Proposition
\ref{prp:c18GS}.

\begin{enumerate}

\item
First, we prove that \(\mathfrak{G}\) must have coclass \(\mathrm{cc}(\mathfrak{G})=2\).
Next, we show that \(\mathfrak{G}\) must be a descendant of \(\langle 3^5,6\rangle\),
this time using the polarized component \(4^2\) of order \(3^6\) of \(\tau_1(\mathfrak{G})\).
Again, we have
\(d_2(\mathfrak{G})\ge\mathrm{MR}(\mathfrak{G})=4=d_1(\mathfrak{G})+2\),
since the \(p\)-multiplicator rank is a lower bound for the relation rank.
Finally, the polarization \(4^2\) of \(\tau_1(\mathfrak{G})=(3^2,1^3,(21)^2)\)
determines the nilpotency class \(\mathrm{cl}(\mathfrak{G})=8\)
and thus also the order \(\lvert\mathfrak{G}\rvert=3^{10}\).
The polarization \(0\) of \(\varkappa_1(\mathfrak{G})=(0122)\) with stable components \((122)\)
unambiguously identifies the mainline vertex of order \(3^{10}\),
which is \(\langle 2187,285\rangle(-\#1;1)^3\), according to Figure
\ref{fig:C18PrunedTreeQ}.

\item
As before,
\cite[Thm.6, p.140]{Sh}
implies that the relation rank \(d_2(G)\) of the \(3\)-class tower group
\(G:=\mathrm{Gal}(\mathrm{F}_3^\infty(K)\vert K)\)
of a number field \(K\) with unit rank \(r=r_1+r_2-1=1\) and \(\zeta_3\notin K\)
satisfies \(d_2(G)\le d_1(G)+r=2+1=3\),
whence \(G\not\simeq\mathfrak{G}\) and \(\ell_3(K)=\mathrm{dl}(G)\ge 3\).
\end{enumerate}
\end{proof}

\noindent
Suppose now that, under the assumptions preceding Proposition
\ref{prp:c18ES2},
\(G=\mathrm{G}_3^\infty(K)\) denotes the \(3\)-class tower group of \(K\) and
we are additionally given the \(2^{\mathrm{nd}}\) order IPAD of \(K\),
\[\tau^{(2)}(K)=
\lbrack 1^2;(\mathbf{4^2};\tau_1(L_1)),(1^3;\tau_1(L_2)),(21;\tau_1(L_3)),(21;\tau_1(L_4))\rbrack,\]
with fixed \(\tau_1(L_1)=((431)^4)\).

\begin{theorem}
\label{thm:c18ES2}

(D.C. Mayer, Sep. \(2015\))

\begin{enumerate}

\item
\(\tau_1(L_2)=(431,\mathbf{(1^3)^3},(1^2)^9)\) and \(\tau_1(L_i)=(431,\mathbf{(21)^3)}\) for \(i=3,4\)
\(\Longleftrightarrow\)\\
\(G\simeq\langle 3^7,285\rangle\mathbf{(-\#1;1)^3}\), of order \(\lvert G\rvert=3^{10}\),
or\\
\(G\simeq\langle 3^7,285\rangle\mathbf{(-\#1;1)^3-\#1;7}\), of order \(\lvert G\rvert=3^{11}\),

\item
\(\tau_1(L_2)=(431,\mathbf{(21^2)^3},(1^2)^9)\) and \(\tau_1(L_i)=(431,\mathbf{(31)^3})\) for \(i=3,4\)
\(\Longleftrightarrow\)\\
\(G\simeq\langle 3^6,49\rangle\mathbf{-\#2;1-\#1;1-\#1;2-\#1;1}\), of order \(\lvert G\rvert=3^{11}\),
or\\
\(G\simeq\langle 3^6,49\rangle\mathbf{-\#2;1-\#1;1-\#1;2-\#1;1-\#1;7}\), of order \(\lvert G\rvert=3^{12}\),
or\\
\(G\simeq\langle 3^6,49\rangle\mathbf{(-\#2;1-\#1;1)^2}\), of order \(\lvert G\rvert=3^{12}\),
or\\
\(G\simeq\langle 3^6,49\rangle\mathbf{(-\#2;1-\#1;1)^2-\#1;1}\), of order \(\lvert G\rvert=3^{13}\),
or\\
\(G\simeq\langle 3^6,49\rangle\mathbf{(-\#2;1-\#1;1)^2-\#1;8}\), of order \(\lvert G\rvert=3^{13}\).

\end{enumerate}

\end{theorem}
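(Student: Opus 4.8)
The plan is to proceed exactly as in the proofs of Theorems \ref{thm:c18GS} and \ref{thm:c18ES1}, only two periodic steps deeper in the descendant tree. Using the \(p\)-group generation algorithm of Newman and O'Brien as implemented in MAGMA, I would construct the descendant tree \(\mathcal{T}(R)\) of the root \(R=\langle 243,6\rangle\), this time carrying the recursion down to order \(3^{13}\) and taking into account all the periodic bifurcations at the not coclass-settled mainline vertices \(\langle 729,49\rangle\), \(\langle 729,49\rangle-\#2;1-\#1;1\), etc., as drawn in Figure \ref{fig:C18PrunedTreeQ}, while pruning away every vertex whose TKT differs from c.18. In parallel with the recursive construction I would compute the iterated IPAD of second order \(\tau^{(2)}(V)\) for every vertex \(V\), and for each non-metabelian vertex \(H\) I would verify that its second derived quotient \(H/H^{\prime\prime}\) equals the prescribed metabelian group \(\mathfrak{G}=\langle 2187,285\rangle(-\#1;1)^3\), which by Proposition \ref{prp:c18ES2} is forced by the \(1^{\mathrm{st}}\) order Artin pattern with polarized component \(4^2\).

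The decisive step is to justify terminating the recursion at order \(3^{13}\). For this I would check, exactly as in Theorem \ref{thm:c18GS}, that several components of the \(2^{\mathrm{nd}}\) order IPAD become stable along each relevant path while the remaining components exhibit a deterministic, dichotomous growth: on branches of coclass \(2\) one has \(\tau_1(L_2)=(\ast,(1^3)^3,(1^2)^9)\) and \(\tau_1(L_3)=\tau_1(L_4)=(\ast,(21)^3)\), whereas on branches of coclass \(3\) one has \(\tau_1(L_2)=(\ast,(21^2)^3,(1^2)^9)\) and \(\tau_1(L_3)=\tau_1(L_4)=(\ast,(31)^3)\), with the polarized first component of \(\tau_1(L_1)\) pinned to \((431)\) by the hypothesis \(\tau_1(K)=\lbrack 1^2;4^2,1^3,(21)^2\rbrack\). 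Once this dichotomy is established for the two branches emanating from \(\mathfrak{G}\), every \(2^{\mathrm{nd}}\) order IPAD occurring further down the pruned subtree \(\mathcal{T}_\ast(\langle 243,6\rangle)\) is already realized by one of the finitely many vertices in the statement, so no new second derived quotient and no new Artin pattern can appear below order \(3^{13}\).

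It then remains to read off the cover of \(\mathfrak{G}\) from the pruned tree and to group the candidates according to their iterated IPAD: in case (1), where \(\tau_1(L_2)=(431,(1^3)^3,(1^2)^9)\) and \(\tau_1(L_i)=(431,(21)^3)\) for \(i=3,4\), the two vertices \(\langle 3^7,285\rangle(-\#1;1)^3\) and \(\langle 3^7,285\rangle(-\#1;1)^3-\#1;7\) share this common second-order IPAD; in case (2), where \(\tau_1(L_2)=(431,(21^2)^3,(1^2)^9)\) and \(\tau_1(L_i)=(431,(31)^3)\), the five vertices \(\langle 3^6,49\rangle-\#2;1-\#1;1-\#1;2-\#1;1\) and its four listed descendants do. As in Theorem \ref{thm:c18ES1}, these batches of two resp. five are not separated by the iterated IPAD of second order, so the stated equivalences are optimal for this invariant; the derived length \(\mathrm{dl}(G)=3\) of every candidate follows because the metabelianization \(\mathfrak{G}=G/G^{\prime\prime}\) is already reached at the quoted step of the tree.

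The hard part will be controlling the combinatorial growth of the tree construction at orders \(3^{12}\) and \(3^{13}\) and arguing rigorously — not merely numerically — that the ``deterministic growth'' of the unstable IPAD components is genuinely forced, so that truncation at \(3^{13}\) loses no candidate. This is precisely the point where the periodicity of branches of length \(2\), setting in with \(\mathcal{B}(7)\), together with the persistence of capitulation type c.18 along the whole pruned subtree, has to be invoked to convert the finite computation into a complete classification.
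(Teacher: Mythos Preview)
Your approach is essentially the same as the paper's: construct the descendant tree of \(\langle 243,6\rangle\) with the \(p\)-group generation algorithm, compute \(\tau^{(2)}(V)\) for each vertex, check second derived quotients of non-metabelian vertices, and read off the cover of \(\mathfrak{G}=\langle 3^7,285\rangle(-\#1;1)^3\) as a batch of two plus a batch of five. The paper's own proof is in fact terser than yours and simply refers back to the method of Theorem~\ref{thm:c18GS}.

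Two small inaccuracies are worth flagging. First, your phrase ``\(\langle 3^6,49\rangle-\#2;1-\#1;1-\#1;2-\#1;1\) and its four listed descendants'' is not right: the three groups built from \(\langle 3^6,49\rangle(-\#2;1-\#1;1)^2\) arise via a \emph{second} bifurcation (step size \(2\) at \(\langle 729,49\rangle-\#2;1-\#1;1\)) and are not descendants of \(\langle 3^6,49\rangle-\#2;1-\#1;1-\#1;2-\#1;1\); they live on a coclass-\(4\) branch, not coclass \(3\). Second, and relatedly, your dichotomy ``coclass \(2\) versus coclass \(3\)'' is lifted verbatim from Theorem~\ref{thm:c18GS}, but at this depth the pruned tree already contains vertices of coclass \(4\) (see Figure~\ref{fig:C18PrunedTreeQ}), so the stabilization argument must cover those as well. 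Neither slip affects the method, but both should be corrected before the write-up.
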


\begin{proof}
Similar as in the proof of Theorem
\ref{thm:c18GS},
we construct the descendant tree \(\mathcal{T}(R)\) of the root \(R=\langle 243,6\rangle\),
determine the iterated IPAD of second order \(\tau^{(2)}(V)\) for each vertex \(V\),
and check the second derived quotient \(H/H^{\prime\prime}\) of non-metabelian vertices \(H\).

It turns out that the cover of the common metabelianization
\(\mathfrak{G}=\langle 3^7,285\rangle(-\#1;1)^3=G/G^{\prime\prime}\)
of all candidates \(G\) for the \(3\)-tower group is given by
\(\mathrm{cov}(\langle 3^7,285\rangle(-\#1;1)^3)=\)\\
\(\lbrace\langle 3^7,285\rangle(-\#1;1)^3,\ \langle 3^7,285\rangle(-\#1;1)^3-\#1;7,\)\\
\(\langle 3^6,49\rangle-\#2;1-\#1;1-\#1;2-\#1;1,\ \langle 3^6,49\rangle-\#2;1-\#1;1-\#1;2-\#1;1-\#1;7,\)\\
\(\langle 3^6,49\rangle(-\#2;1-\#1;1)^2,\ \langle 3^6,49\rangle(-\#2;1-\#1;1)^2-\#1;1,\ \langle 3^6,49\rangle(-\#2;1-\#1;1)^2-\#1;8\rbrace\).

The various vertices 
are not characterized uniquely by their iterated IPAD of second order.
Rather they can be identified as batches of two resp. five vertices
in the claimed manner.
\end{proof}

\begin{corollary} 
\label{cor:c18ES2}

(D.C. Mayer, Sep. \(2015\))\\
If \(K\) has torsionfree unit rank \(1\) and does not contain a primitive third root of unity, then

\begin{enumerate}

\item
\(\tau_1(L_2)=(431,\mathbf{(1^3)^3},(1^2)^9)\) and \(\tau_1(L_i)=(431,\mathbf{(21)^3)}\) for \(i=3,4\)
\(\Longleftrightarrow\)\\
\(G\simeq\langle 3^7,285\rangle\mathbf{(-\#1;1)^3-\#1;7}\).

\item
\(\tau_1(L_2)=(431,\mathbf{(21^2)^3},(1^2)^9)\) and \(\tau_1(L_i)=(431,\mathbf{(31)^3})\) for \(i=3,4\)
\(\Longleftrightarrow\)\\
\(G\simeq\langle 3^6,49\rangle\mathbf{-\#2;1-\#1;1-\#1;2-\#1;1-\#1;7}\) 
or\\
\(G\simeq\langle 3^6,49\rangle\mathbf{(-\#2;1-\#1;1)^2-\#1;1}\) 
or\\
\(G\simeq\langle 3^6,49\rangle\mathbf{(-\#2;1-\#1;1)^2-\#1;8}\).

\end{enumerate}

\noindent
All these groups have derived length \(\mathrm{dl}(G)=3\).

\end{corollary}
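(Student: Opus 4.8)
The plan is to deduce the corollary from Theorem~\ref{thm:c18ES2} in exactly the way Corollary~\ref{cor:c18ES1} was deduced from Theorem~\ref{thm:c18ES1}: Theorem~\ref{thm:c18ES2} already supplies, for each of the two prescribed second order IPADs, the complete finite list of candidates for the $3$-tower group $G=\mathrm{G}_3^\infty(K)$, so it remains only to cut this list down by the arithmetic constraint coming from the Shafarevich bound.

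First I would record the relevant ranks. Since $\mathrm{Cl}_3(K)$ has type $(3,3)$, the generator rank of $G$ is $d_1(G)=d_1(G/G^\prime)=d_1(\mathrm{Cl}_3(K))=2$. The hypotheses on $K$ --- torsionfree unit rank $r=1$ and $\zeta_3\notin K$ --- put us in the first case of~\eqref{eqn:Shafarevich}, so Theorem~\ref{thm:Shafarevich} gives $d_2(G)\le d_1(G)+r=3$. On the other hand $d_2(G)\ge\mathrm{MR}(G)$, the $p$-multiplicator rank being a lower bound for the relation rank (as already used in Proposition~\ref{prp:c18GS} and Example~\ref{exm:Shafarevich}). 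Hence every admissible $G$ satisfies $\mathrm{MR}(G)\le 3$.

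Next comes the elimination. In case (1), Theorem~\ref{thm:c18ES2} restricts $G$ to $\langle 3^7,285\rangle(-\#1;1)^3$ or $\langle 3^7,285\rangle(-\#1;1)^3-\#1;7$. The first of these is the common metabelianization $G/G^{\prime\prime}$ of the cover; it is a mainline vertex of the pruned tree in Figure~\ref{fig:C18PrunedTreeQ}, hence infinitely capable with $\mathrm{NR}\ge 1$, and it has $\mathrm{MR}=4$, so $d_2\ge 4>3$ and it violates the Shafarevich inequality; only $\langle 3^7,285\rangle(-\#1;1)^3-\#1;7$ remains, which is (1). The same argument applied to case (2) of Theorem~\ref{thm:c18ES2} discards the two infinitely capable members $\langle 3^6,49\rangle-\#2;1-\#1;1-\#1;2-\#1;1$ and $\langle 3^6,49\rangle(-\#2;1-\#1;1)^2$ of the cover (again $\mathrm{MR}=4$, hence $d_2\ge 4$), leaving exactly the three groups listed in (2). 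The equivalences in both items then follow from the equivalences already established in Theorem~\ref{thm:c18ES2} (the reverse implications need no new input, since the surviving groups are among the candidates), and the assertion $\mathrm{dl}(G)=3$ is inherited verbatim.

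The main obstacle is the group-theoretic bookkeeping underlying the elimination: one must be sure that the two vertices discarded in each case are precisely the ones with $p$-multiplicator rank $4$, while the surviving vertices all have $d_2\le 3$. This is checked with the $p$-group generation algorithm in MAGMA by computing nuclear and $p$-multiplicator ranks along the seven-element cover $\mathrm{cov}(\langle 3^7,285\rangle(-\#1;1)^3)$ obtained in the proof of Theorem~\ref{thm:c18ES2}, and by confirming that its infinitely capable members coincide with the mainline and periodic-bifurcation vertices drawn in Figure~\ref{fig:C18PrunedTreeQ}. Once this is in hand, no further calculation is needed.
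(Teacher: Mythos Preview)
Your proposal is correct and follows essentially the same route as the paper's own proof: start from the candidate lists of Theorem~\ref{thm:c18ES2}, invoke the Shafarevich bound $d_2(G)\le d_1(G)+r=3$ for $r=1$ and $\zeta_3\notin K$, and discard precisely the three infinitely capable groups $\langle 3^7,285\rangle(-\#1;1)^3$, $\langle 3^6,49\rangle-\#2;1-\#1;1-\#1;2-\#1;1$, and $\langle 3^6,49\rangle(-\#2;1-\#1;1)^2$ because their $p$-multiplicator rank is $4$. Your write-up is in fact slightly more careful than the paper's in flagging that one must also confirm the surviving groups have $d_2\le 3$, a point the paper leaves implicit.
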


\begin{proof}
The three groups
\(\langle 3^7,285\rangle\mathbf{(-\#1;1)^3}\)
and
\(\langle 3^6,49\rangle-\mathbf{\#2;1-\#1;1-\#1;2-\#1;1}\)
and
\(\langle 3^6,49\rangle\mathbf{(-\#2;1-\#1;1)^2}\),
which are infinitely capable,
have \(p\)-multiplicator rank \(\mathrm{MR}(G)=4\) and
thus relation rank \(d_2(G)\ge 4\),
and consequently cannot satisfy the Shafarevich inequality \(d_2(G)\le d_1(G)+r=2+1=3\)
\cite[Thm.6, p.140]{Sh}
for a field \(K\) with unit rank \(r=r_1+r_2-1=1\) and \(\zeta_3\notin K\).
\end{proof}



\subsection{Real Quadratic Fields of Type c.18\(\uparrow^2\)}
\label{ss:RQFc18ES2}

\begin{proposition}
\label{prp:RQFc18ES2}

(M.R. Bush, Jul. \(2015\))\\
In the range \(0<d<10^9\) of fundamental discriminants \(d\)
of real quadratic fields \(K=\mathbb{Q}(\sqrt{d})\)
there exist precisely \(\mathbf{5}\) cases with \(1^{\mathrm{st}}\) IPAD
\(\tau^{(1)}(K)=\lbrack 1^2;\mathbf{4^2},1^3,(21)^2\rbrack\).

\end{proposition}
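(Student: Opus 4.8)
The plan is to proceed exactly as for Proposition \ref{prp:RQFc18ES1}, namely by a direct enumeration over the fundamental discriminants $0<d<10^9$. First I would run through all such $d$, discarding at once those for which $3\nmid h(K)$ and, among the rest, those for which the $3$-class group $\mathrm{Cl}_3(K)$ is not of type $(3,3)$; since the requested IPAD begins with $\tau_0=1^2$, only fields with $3$-class rank exactly $2$ can occur, so the surviving list is comparatively short. For each survivor I would construct the four unramified cyclic cubic extensions $L_1,\dots,L_4$ (the degree-$3$ subfields of $\mathrm{F}_3^1(K)$ over $K$) and compute the abelian type invariants of their $3$-class groups $\mathrm{Cl}_3(L_i)$, thereby assembling the first-order transfer target type $\tau_1(K)$.

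The second step is to retain precisely those $K$ with $\tau_1(K)=((81,81),(3,3,3),(9,3),(9,3))$, i.e. $\tau^{(1)}(K)=\lbrack 1^2;\mathbf{4^2},1^3,(21)^2\rbrack$, the polarized component $4^2$ singling out the $L_i$ whose $3$-class group has type $(81,81)$. By Corollary \ref{cor:RQFc18ES1}, any quadratic field with this IPAD is automatically real with $3$-capitulation type $\varkappa_1(K)=(0122)$, so the IPAD alone characterizes the set and no separate capitulation computation is needed for the count; a final tally then yields the asserted number $\mathbf{5}$. Concretely, the computation can be carried out with PARI/GP \cite{PARI}, using the ray class group and principalization routines described in \cite[\S\ 5, pp.446--450]{Ma3} to locate the index-$3$ subgroups of $\mathrm{Cl}_3(K)$ and their associated class fields, and then independently double-checked with MAGMA \cite{MAGMA}, as was done for Proposition \ref{prp:RQFc18ES1}.

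The main obstacle is purely one of scale and running time: sweeping all fundamental discriminants up to $10^9$ and, for every candidate, computing the $3$-class group structure of the $L_i$ (which are of degree $6$ over $\mathbb{Q}$) is expensive, and the search must be organized with care — first tabulating the much shorter list of $d$ with $\mathrm{Cl}_3(K)\simeq(3,3)$ and the requested transfer kernel type, and only then evaluating second-layer data for those — so that it terminates in feasible time. Since the polarized component here is $4^2$, situated higher on the coclass tree than in the ground state ($2^2$) and the first excited state ($3^2$), the relevant discriminants are correspondingly sparse, which makes the final value $5$ plausible and the verification tractable; the role of the proof is simply to certify that this exhaustive search has in fact been completed over the full range.
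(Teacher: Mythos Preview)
Your proposal is correct and matches the paper's own proof, which simply records that the computation was carried out by M.~R.~Bush in PARI/GP (using the techniques of \cite[\S\ 5]{Ma3}) and independently verified in MAGMA. One small slip: the corollary you invoke should be Corollary~\ref{cor:RQFc18ES2} (for the $4^2$ polarization) rather than~\ref{cor:RQFc18ES1}, though both rest on the same item of \cite[Cor.~4.4.3]{Ma3} and the point is in any case auxiliary to the count.
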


\begin{proof}
The results were communicated to us on July 11, 2015, by M.R. Bush, who used PARI/GP
\cite{PARI}
with similar techniques as described in our paper
\cite[\S\ 5, pp.446--450]{Ma3},
and additionally double-checked with MAGMA
\cite{MAGMA}.
\end{proof}

\begin{corollary}
\label{cor:RQFc18ES2}

(D.C. Mayer, \(2010\))\\
A quadratic field \(K=\mathbb{Q}(\sqrt{d})\)
with
\(\tau^{(1)}(K)=\lbrack 1^2;\mathbf{4^2},1^3,(21)^2\rbrack\)
must be a real quadratic field
with \(3\)-capitulation type \(\varkappa_1(K)=(0122)\).

\end{corollary}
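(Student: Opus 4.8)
The plan is to reduce the statement to the classification of metabelian \(3\)-groups with abelianization of type \((3,3)\), exactly as the ground state version, Corollary \ref{cor:RQFc18ES1}, was obtained from item (1) of \cite[Cor.4.4.3, p.442]{Ma3}; passing from the polarized IPAD component \(3^2\) there to \(4^2\) here changes only the order (and nilpotency class) of the relevant second \(3\)-class group, not the logical structure. First I would note that \(\tau_0(K)=1^2\) forces \(\mathrm{Cl}_3(K)\simeq(3,3)\), so \(K\) has exactly four unramified cyclic cubic extensions \(L_1,\dots,L_4\), and the second \(3\)-class group \(\mathfrak{G}:=\mathrm{G}_3^2(K)\) is a finite metabelian \(3\)-group with \(\mathfrak{G}/\mathfrak{G}^\prime\simeq(3,3)\) whose first layer of the TTT equals \(\tau_1(K)=(4^2,1^3,(21)^2)\) (up to ordering) and whose first-layer TKT equals \(\varkappa_1(K)\).

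Next I would locate \(\mathfrak{G}\) on a coclass tree, reusing the arguments of Propositions \ref{prp:c18GS} and \ref{prp:c18ES2}. By item 2) of \cite[Thm.3.1]{Ma6} the polarized component \(4^2\), having order \(3^6>3^3\), cannot occur for a sporadic \(3\)-group, so \(\mathfrak{G}\) is a vertex of one of the three coclass-\(2\) trees with metabelian mainline, rooted at \(\langle 3^5,3\rangle\), \(\langle 3^5,6\rangle\), \(\langle 3^5,8\rangle\); the three non-polarized components form the stable pattern \((1^3,(21)^2)\), which by item 2) of \cite[Thm.3.2]{Ma6} is the signature of descendants of \(\langle 3^5,6\rangle\) (the patterns for the other two roots being \(((1^3)^2,21)\) and \(((21)^3)\)), so \(\mathfrak{G}\) lies on \(\mathcal{T}^2(\langle 243,6\rangle)\), and the value \(4^2\) of the polarized component fixes its nilpotency class. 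Item (1) of \cite[Cor.4.4.3]{Ma3} then identifies, among the vertices of this tree that are realizable as the second \(3\)-class group of a quadratic field, the type c.18 as the only one compatible with the full IPAD \(\lbrack 1^2;4^2,1^3,(21)^2\rbrack\), whence \(\varkappa_1(K)=\varkappa_1(\mathfrak{G})=(0122)\).

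It remains to exclude \(d<0\). Type \((0122)\) has a total transfer kernel in its first position, i.e. the \(3\)-class group of \(K\) capitulates completely in \(L_1\), so the capitulation number satisfies \(\nu\ge 1\). But for a complex quadratic field \(k\) with \(\mathrm{Cl}_3(k)\simeq(3,3)\) one has \(\nu=0\): no unramified cyclic cubic extension of \(k\) absorbs the entire \(3\)-class group \cite{CgFt,Ma1}. Hence \(K\) cannot be complex, so \(d>0\) and \(K\) is a real quadratic field, with \(\varkappa_1(K)=(0122)\); this is also the reason why realness, and not merely the capitulation type, is forced by the IPAD.

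The main obstacle is the step delegated to \cite[Cor.4.4.3]{Ma3}: showing that the \emph{full} IPAD \(\lbrack 1^2;4^2,1^3,(21)^2\rbrack\), not merely its polarized component, separates type c.18 from the co-resident types E.6, E.14 and H.4 that also inhabit \(\mathcal{T}^2(\langle 243,6\rangle)\) and are realized by quadratic fields. This rests on Nebelung's complete description \cite{Ne} of the metabelian \(3\)-groups with abelianization \((3,3)\), together with the defect-of-commutativity bookkeeping of \cite[\S\ 3.3.2]{Ma4} (which, since defect \(k=1\) occurs only for type H.4, moves an H.4 vertex to a different nilpotency class for a prescribed polarized component, as already used in the proof of Proposition \ref{prp:c18GS}). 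I would cite this rather than reprove it, since reestablishing it amounts to redoing the metabelian part of the classification.
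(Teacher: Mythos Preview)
Your proposal is correct and ultimately rests on the same external input as the paper's proof: item (1) of \cite[Cor.4.4.3, p.442]{Ma3}. The paper's own argument is the single sentence ``This is a consequence of item (1) in \cite[Cor.4.4.3, p.442]{Ma3}'', so your write-up is an expanded and more self-contained version of the same route rather than a different one. The one genuine addition you make is the separate justification of realness via the capitulation-number constraint \(\nu=0\) for complex quadratic fields \cite{CgFt,Ma1}; in the paper this is absorbed into the citation of \cite{Ma3}, so your version makes explicit a step the paper leaves implicit.
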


\begin{proof}
This is a consequence of item (1) in
\cite[Cor.4.4.3, p.442]{Ma3}.
\end{proof}

\begin{theorem} 
\label{thm:RQFc18ES2}

(D.C. Mayer, Sep. \(2015\))

\begin{enumerate}

\item
The \textbf{single} real quadratic fields \(K=\mathbb{Q}(\sqrt{d})\)
with discriminant \(d=241\,798\,776\) (\(\mathbf{20}\%\) of \(5\)),
has \(3\)-class tower group
\(G\simeq\langle 3^7,285\rangle\mathbf{(-\#1;1)^3-\#1;7}\).

\item
The \(\mathbf{4}\) real quadratic fields \(K=\mathbb{Q}(\sqrt{d})\) with the following discriminants \(d\) (\(\mathbf{80}\%\) of \(5\)),

\begin{center}
\(174\,458\,681,\ 298\,160\,513,\ 496\,930\,117,\ 743\,138\,141,\)
\end{center}

\noindent
have \(3\)-class tower group either\\
\(G\simeq\langle 3^6,49\rangle\mathbf{-\#2;1-\#1;1-\#1;2-\#1;1-\#1;7}\) or\\
\(G\simeq\langle 3^6,49\rangle\mathbf{(-\#2;1-\#1;1)^2-\#1;1}\) or\\
\(G\simeq\langle 3^6,49\rangle\mathbf{(-\#2;1-\#1;1)^2-\#1;8}\).

\end{enumerate}

\noindent
In each case, the length of the \(3\)-class tower of \(K\) is given by \(\ell_3(K)=3\).

\end{theorem}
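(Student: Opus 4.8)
The plan is to mirror the proof of Theorem \ref{thm:RQFc18ES1}, now invoking Corollary \ref{cor:c18ES2} in place of Corollary \ref{cor:c18ES1}. First I would note that the hypotheses preceding Proposition \ref{prp:c18ES2} hold automatically for each of the five discriminants: by Proposition \ref{prp:RQFc18ES2} every one of them has \(1^{\mathrm{st}}\) IPAD \(\tau^{(1)}(K)=\lbrack 1^2;\mathbf{4^2},1^3,(21)^2\rbrack\), and by Corollary \ref{cor:RQFc18ES2} this already forces \(K\) to be real quadratic with \(3\)-capitulation type \(\varkappa_1(K)=(0122)\). Hence Proposition \ref{prp:c18ES2} applies and gives \(\mathfrak{G}=\mathrm{G}_3^2(K)\simeq\langle 2187,285\rangle(-\#1;1)^3\) together with the lower bound \(\ell_3(K)\ge 3\); it then remains only to determine the \(3\)-tower group \(G=\mathrm{G}_3^\infty(K)\) itself.

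The substance of the proof is the computation, for each of the five fields, of enough of the \(2^{\mathrm{nd}}\) order IPAD \(\tau^{(2)}(K)\) to select between the two alternatives of Corollary \ref{cor:c18ES2}, concretely of the abelian type invariants \(\tau_1(L_2)\) and \(\tau_1(L_3)=\tau_1(L_4)\). I would realize the four unramified cyclic cubic fields \(L_i\vert K\), and then for each \(L_i\) compute the structures of \(\mathrm{Cl}_3(M)\) for its four unramified cyclic cubic extensions \(M\vert L_i\); since each \(M\) has degree \(9\) over \(\mathbb{Q}\), this is carried out with PARI/GP (double-checked with MAGMA) exactly as in \cite[\S\ 5, pp.446--450]{Ma3}. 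For \(d=241\,798\,776\) the expected output is \(\tau_1(L_2)=(431,\mathbf{(1^3)^3},(1^2)^9)\) and \(\tau_1(L_i)=(431,\mathbf{(21)^3})\) for \(i=3,4\), which is the hypothesis of item (1) of Corollary \ref{cor:c18ES2} and yields \(G\simeq\langle 3^7,285\rangle(-\#1;1)^3-\#1;7\). For the four remaining discriminants the output is \(\tau_1(L_2)=(431,\mathbf{(21^2)^3},(1^2)^9)\) and \(\tau_1(L_i)=(431,\mathbf{(31)^3})\) for \(i=3,4\), the hypothesis of item (2), so that \(G\) is one of the three groups \(\langle 3^6,49\rangle-\#2;1-\#1;1-\#1;2-\#1;1-\#1;7\), \(\langle 3^6,49\rangle(-\#2;1-\#1;1)^2-\#1;1\), \(\langle 3^6,49\rangle(-\#2;1-\#1;1)^2-\#1;8\). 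Since Corollary \ref{cor:c18ES2} records \(\mathrm{dl}(G)=3\) in every case, combining this with \(\ell_3(K)=\mathrm{dl}(G)\) and the lower bound above gives \(\ell_3(K)=3\), finishing the argument.

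I expect the only genuine obstacle to be computational, not conceptual. The largest discriminant \(d=743\,138\,141\) is close to \(10^9\), so assembling the \(3\)-class groups of the degree-\(9\) number fields that feed \(\tau^{(2)}(K)\) demands substantial CPU time, which is precisely why the statement is restricted to the five fields of Proposition \ref{prp:RQFc18ES2} rather than to an infinite family. All the delicate group theory has already been handled upstream: the determination of \(\mathrm{cov}(\langle 3^7,285\rangle(-\#1;1)^3)\) in Theorem \ref{thm:c18ES2}, and the removal, via the corrected Shafarevich inequality \(d_2(G)\le d_1(G)+r=3\) of Theorem \ref{thm:Shafarevich}, of the two infinitely capable members of \(p\)-multiplicator rank \(4\) in Corollary \ref{cor:c18ES2}, leave nothing to redo here.
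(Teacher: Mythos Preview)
Your proposal is correct and follows essentially the same route as the paper's proof, which simply records that the five fields have \(\varkappa_1(K)=(0122)\), \(1^{\mathrm{st}}\) IPAD \(\lbrack 1^2;4^2,1^3,(21)^2\rbrack\), and ``suitable \(2^{\mathrm{nd}}\) IPAD'', and then invokes Corollary~\ref{cor:c18ES2}. Two small slips in your description of the computation: the fields \(M\) have degree \(18\) (not \(9\)) over \(\mathbb{Q}\), since \(\lbrack L_i:\mathbb{Q}\rbrack=6\); and \(L_2\), having \(\mathrm{Cl}_3(L_2)\) of type \(1^3\), has \(13\) (not four) unramified cyclic cubic extensions, consistent with the \(1+3+9\) components you quote for \(\tau_1(L_2)\); also Corollary~\ref{cor:c18ES2} discards \emph{three} infinitely capable groups, not two.
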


\begin{proof}
Since all these real quadratic fields \(K=\mathbb{Q}(\sqrt{d})\) have
\(3\)-capitulation type \(\varkappa_1(K)=(0122)\), \(1^{\mathrm{st}}\) IPAD  
\(\tau^{(1)}(K)=\lbrack 1^2;\mathbf{4^2},1^3,(21)^2\rbrack\)
and suitable \(2^{\mathrm{nd}}\) IPAD,
the claim is a consequence of Corollary
\ref{cor:c18ES2}.
\end{proof}



\section{Capitulation Type c.21, \((2034)\)}
\label{s:c21}

\subsection{The Ground State of Capitulation Type c.21}
\label{ss:c21GS}

Assume that \(K\) is an algebraic number field
with \(3\)-class group \(\mathrm{Cl}_3(K)\) of type \((3,3)\hat{=}1^2\).
Let \(L_1,\ldots,L_4\) be the four unramified cyclic cubic extensions of \(K\),
and suppose that the \(1^{\mathrm{st}}\) order Artin pattern \(\mathrm{AP}^{(1)}(K)\) of \(K\)
is given by the IPAD 
\(\tau^{(1)}(K)=\lbrack 1^2;21,\mathbf{2^2},21,21\rbrack\)
and the IPOD
\(\varkappa^{(1)}(K)=\lbrack G^\prime;H_2,\mathbf{G},H_3,H_4\rbrack\),
i.e. \(\tau_1(K)=((9,3),(9,9),(9,3),(9,3))\) are the type invariants of the \(3\)-class groups of the \(L_i\)
and \(\varkappa_1(K)=(2034)\) is the \(3\)-capitulation type of \(K\) in the \(L_i\).

\begin{proposition} (D.C. Mayer, \(2010\))
\label{prp:c21GS}

\begin{enumerate}

\item
The second \(3\)-class group \(\mathfrak{G}=\mathrm{G}_3^2(K)\) of \(K\)
is isomorphic to the metabelian \(3\)-group \(\langle 729,54\rangle\)
and thus its relation rank satisfies the inequality \(d_2(\mathfrak{G})\ge d_1(\mathfrak{G})+2\).

\item
Consequently,
if \(K\) is a number field with torsionfree unit rank \(r=1\)
and does not contain the third roots of unity
(in particular, if \(K=\mathbb{Q}(\sqrt{d})\) is a real quadratic field),
then the length of the \(3\)-class field tower of \(K\) must be \(\ell_3(K)\ge 3\).

\end{enumerate}

\end{proposition}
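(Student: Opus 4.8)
The plan is to imitate the proof of Proposition~\ref{prp:c18GS} line by line, replacing the coclass-$2$ tree $\mathcal{T}^2(\langle 243,6\rangle)$ of Figure~\ref{fig:TreeOverviewQ} by the tree $\mathcal{T}^2(\langle 243,8\rangle)$ of Figure~\ref{fig:TreeOverviewU}, and the c.$18$ data by the c.$21$ data $\tau_1(K)=(21,2^2,21,21)=(2^2,(21)^3)$ and $\varkappa_1(K)=(2034)$.

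For part~(1), I would first show that $\mathfrak{G}=\mathrm{G}_3^2(K)$ has coclass $\mathrm{cc}(\mathfrak{G})=2$, in two independent ways. Using the TTT alone: by items~1) and 3) of \cite[Thm.3.2, p.291]{Ma6}, $\tau_1(\mathfrak{G})=\tau_1(K)$ would have to contain three components of type $1^2$ if $\mathrm{cc}(\mathfrak{G})=1$, and two components of type $1^3$ if $\mathrm{cc}(\mathfrak{G})\ge 3$; since $(2^2,(21)^3)$ contains neither a $1^2$ nor a $1^3$, only $\mathrm{cc}(\mathfrak{G})=2$ remains. Using the TKT alone: by \cite[Thm.2.5, p.479]{Ma2}, coclass $1$ would force three total kernels in $\varkappa_1(\mathfrak{G})=(2034)$, which has only one, and by \cite[Cor.3.0.2, p.772]{BuMa} coclass $\ge 3$ (descent from $\langle 3^5,3\rangle$, TKT $(2100)$) would force a $2$-cycle in $\varkappa_1(\mathfrak{G})$, which is cycle-free; hence again $\mathrm{cc}(\mathfrak{G})=2$. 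Next I would locate $\mathfrak{G}$: by item~2) of \cite[Thm.3.1, p.290]{Ma6}, the polarized component $2^2$ of order $3^4$ excludes a sporadic group, so $\mathfrak{G}$ lies on one of the three coclass-$2$ trees with metabelian mainline, rooted in $\langle 3^5,3\rangle$, $\langle 3^5,6\rangle$, $\langle 3^5,8\rangle$, whose stable TTT triples are $((1^3)^2,21)$, $(1^3,(21)^2)$, $((21)^3)$; the stable part $(21)^3$ of $\tau_1(\mathfrak{G})$ singles out $\langle 3^5,8\rangle$, by item~2) of \cite[Thm.3.2, p.291]{Ma6}, and then $d_2(\mathfrak{G})\ge\mathrm{MR}(\mathfrak{G})=4=d_1(\mathfrak{G})+2$. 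Finally, exactly as in the c.$18$ argument, the polarization $2^2$ fixes $\mathrm{cl}(\mathfrak{G})=4$ and $\lvert\mathfrak{G}\rvert=3^6$ (the defect $k=1$ being possible only for type G.$16$, $(2134)$, not for c.$21$), while the polarization $0$ of $\varkappa_1(\mathfrak{G})=(2034)$ with stable components $(234)$ identifies the mainline vertex of order $3^6$ on $\mathcal{T}^2(\langle 243,8\rangle)$ as $\langle 729,54\rangle$, by Figure~\ref{fig:TreeOverviewU}.

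For part~(2), I would invoke the corrected Shafarevich bound Theorem~\ref{thm:Shafarevich}: a number field $K$ with torsionfree unit rank $r=r_1+r_2-1=1$ and $\zeta_3\notin K$ has $3$-tower group $G=\mathrm{Gal}(\mathrm{F}_3^\infty(K)\vert K)$ satisfying $d_2(G)\le d_1(G)+r=2+1=3$. Since $d_2(\mathfrak{G})\ge 4>3$ by part~(1), we cannot have $G\simeq\mathfrak{G}$; as $G/G^{\prime\prime}\simeq\mathfrak{G}$ is non-abelian, $G$ cannot be metabelian, so $\mathrm{dl}(G)\ge 3$ and therefore $\ell_3(K)=\mathrm{dl}(G)\ge 3$.

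The step I expect to be the main obstacle is the precise identification of the mainline vertex: one must be certain that the defect of commutativity is $0$ (so that $\mathfrak{G}$ is a big full disc, not a small one), which relies on the fact that on $\mathcal{T}^2(\langle 243,8\rangle)$ a positive defect forces the TKT G.$16$, and one must know that along this coclass tree the Artin pattern $(\varkappa_1,\tau_1)$ genuinely behaves like a faithful coordinate system, so that the pair $(\tau_1(K),\varkappa_1(K))$ pins down a unique vertex. Everything else is a routine transcription of the proof of Proposition~\ref{prp:c18GS}.
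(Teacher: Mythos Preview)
Your proposal is correct and follows the paper's own proof essentially verbatim: the same two independent arguments for $\mathrm{cc}(\mathfrak{G})=2$ via \cite[Thm.3.2]{Ma6} and \cite[Thm.2.5]{Ma2}/\cite[Cor.3.0.2]{BuMa}, the same localization on $\mathcal{T}^2(\langle 3^5,8\rangle)$ via the stable TTT triple $((21)^3)$, the same use of the polarization $2^2$ (with defect $k=1$ excluded since only type G.16 admits it) to fix $\lvert\mathfrak{G}\rvert=3^6$, and the same Shafarevich bound for part~(2). The only cosmetic difference is that the paper cites \cite[Thm.6]{Sh} directly rather than the corrected Theorem~\ref{thm:Shafarevich}, but the content is identical.
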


\begin{proof}

\begin{enumerate}

\item
First, we prove that \(\mathfrak{G}\) must have coclass \(\mathrm{cc}(\mathfrak{G})=2\).
This can be done in two ways,
either using \(\tau_1(K)\) alone or using \(\varkappa_1(K)\) alone.

\begin{itemize}
\item
According to items 1) and 3) of
\cite[Thm.3.2, p.291]{Ma6},
\(\tau_1(\mathfrak{G})=\tau_1(K)\)
must contain three components of type \(1^2\) if \(\mathrm{cc}(\mathfrak{G})=1\),
and two components of type \(1^3\) if \(\mathrm{cc}(\mathfrak{G})\ge 3\),
whence \(\mathrm{cc}(\mathfrak{G})=2\) is the only possibility for
\(\tau_1(\mathfrak{G})=(21,2^2,(21)^2)\) without any \(1^2\) and without any \(1^3\).
\item
According to
\cite[Thm.2.5, p.479]{Ma2},
\(\varkappa_1(\mathfrak{G})=\varkappa_1(K)\) must contain three total kernels,
designated by \(0\), if \(\mathrm{cc}(\mathfrak{G})=1\).
Since all metabelian \(3\)-groups of coclass bigger than \(2\)
are descendants of \(\langle 3^5,3\rangle\),
and the TKT \((2100)\) of this root contains a \(2\)-cycle,
the TKT \(\varkappa_1(\mathfrak{G})\) must also contain a \(2\)-cycle,
if \(\mathrm{cc}(\mathfrak{G})\ge 3\),
according to
\cite[Cor.3.0.2, p.772]{BuMa}.
Therefore, the only possibility for \(\varkappa_1(\mathfrak{G})=(2034)\)
without a \(2\)-cycle and with only one \(0\)
is \(\mathrm{cc}(\mathfrak{G})=2\).
\end{itemize}

Next, we show that \(\mathfrak{G}\) must be a descendant of \(\langle 3^5,8\rangle\).
According to item 2) of
\cite[Thm.3.1, p.290]{Ma6},
the polarized component \(2^2\) of order \(3^4\) of \(\tau_1(\mathfrak{G})=\tau_1(K)\)
cannot occur for a sporadic \(3\)-group outside of coclass-\(2\) trees,
whence \(\mathfrak{G}\) must be a vertex of one of the three coclass-\(2\) trees
with metabelian mainline.
Their roots are
\(\langle 3^5,3\rangle\), resp. \(\langle 3^5,6\rangle\), resp. \(\langle 3^5,8\rangle\).
All descendants of these roots have three stable components of their TTT,
\(((1^3)^2,21)\), resp. \((1^3,(21)^2)\), resp. \(((21)^3)\),
whence \(\tau_1(\mathfrak{G})=(21,2^2,(21)^2)\)
unambiguously leads to a descendant of \(\langle 3^5,8\rangle\), by item 2) of
\cite[Thm.3.2, p.291]{Ma6}.
Furthermore, we have
\(d_2(\mathfrak{G})\ge\mathrm{MR}(\mathfrak{G})=4=d_1(\mathfrak{G})+2\),
since the \(p\)-multiplicator rank is a lower bound for the relation rank.

Finally, the Artin pattern
\(\mathrm{AP}(\mathfrak{G})=(\varkappa_1(\mathfrak{G}),\tau_1(\mathfrak{G}))\)
provides a sort of coordinate system
in which the coclass tree with root \(\langle 3^5,8\rangle\) is embedded,
with horizontal axis \(\varkappa_1(\mathfrak{G})\) and vertical axis \(\tau_1(\mathfrak{G})\).
The polarization \(2^2\) of \(\tau_1(\mathfrak{G})=(21,2^2,(21)^2)\)
determines the nilpotency class \(\mathrm{cl}(\mathfrak{G})=4\)
and thus also the order \(\lvert\mathfrak{G}\rvert=3^6\), according to item 2) of
\cite[Thm.3.2, p.291]{Ma6},
since the defect \(k=1\) is only possible for type G.16 \((2134)\).
The polarization \(0\) of \(\varkappa_1(\mathfrak{G})=(2034)\) with stable components \((234)\)
unambiguously identifies the mainline vertex of order \(3^6\),
which is \(\langle 3^6,54\rangle\), according to Figure
\ref{fig:TreeOverviewU}.

\item
According to
\cite[Thm.6, p.140]{Sh},
the relation rank \(d_2(G)\) of the \(3\)-class tower group
\(G:=\mathrm{Gal}(\mathrm{F}_3^\infty(K)\vert K)\)
of a number field \(K\) with torsionfree unit rank \(r=r_1+r_2-1=1\) and \(\zeta_3\notin K\)
must satisfy \(d_2(G)\le d_1(G)+r=2+1=3\),
whence \(G\not\simeq\mathfrak{G}\) and \(\ell_3(K)=\mathrm{dl}(G)\ge 3\).
\end{enumerate}
\end{proof}

\noindent
Suppose now that, under the assumptions preceding Proposition
\ref{prp:c21GS},
\(G=\mathrm{G}_3^\infty(K)\) denotes the \(3\)-class tower group of \(K\) and
we are additionally given the \(2^{\mathrm{nd}}\) order IPAD of \(K\),
\[\tau^{(2)}(K)=
\lbrack 1^2;(21;\tau_1(L_1)),(\mathbf{2^2};\tau_1(L_2)),(21;\tau_1(L_3)),(21;\tau_1(L_4))\rbrack,\]
with fixed \(\tau_1(L_1)=(21^2,(21)^3)\) and \(\tau_1(L_2)=((21^2)^4)\).

\begin{theorem}
\label{thm:c21GS}

(D.C. Mayer, Aug. \(2015\))\\
Among the non-metabelian candidates for the \(3\)-tower group \(G\) of \(K\),
the immediate descendants of step size \(1\) of \(\langle 729,54\rangle\)
are characterized by the following criteria:

\begin{enumerate}

\item
\(\tau_1(L_3)=(21^2,(21)^3)\), \(\tau_1(L_4)=(21^2,\mathbf{(31)^3})\)
\(\Longleftrightarrow\)
\(G\simeq\langle 2187,\mathbf{307}\rangle\),

\item
\(\tau_1(L_3)=(21^2,\mathbf{(31)^3})\), \(\tau_1(L_4)=(21^2,(21)^3)\) 
\(\Longleftrightarrow\)
\(G\simeq\langle 2187,\mathbf{308}\rangle\).

\end{enumerate}

\noindent
In both cases, we have derived length \(\mathrm{dl}(G)=3\) and nilpotency class \(\mathrm{cl}(G)=5\).

\end{theorem}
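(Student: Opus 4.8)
The plan is to imitate verbatim the proof of Theorem~\ref{thm:c18GS}, replacing the root $\langle 243,6\rangle$ by $R=\langle 243,8\rangle$, so that the relevant coclass tree becomes $\mathcal{T}^2(\langle 243,8\rangle)$ of Figure~\ref{fig:TreeOverviewU}. By Proposition~\ref{prp:c21GS} the metabelianization $\mathfrak{G}=G/G^{\prime\prime}$ of any candidate $G$ for the $3$-class tower group is forced to be the mainline vertex $\langle 729,54\rangle$, so $G$ lies in the cover $\mathrm{cov}(\langle 729,54\rangle)$; the non-metabelian candidates are exactly the vertices $H$ of the descendant tree with $H/H^{\prime\prime}\simeq\langle 729,54\rangle$, and the theorem only asks us to pin down those of step size $1$.

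First I would run the $p$-group generation algorithm of Newman~\cite{Nm} and O'Brien~\cite{Ob}, as implemented in MAGMA~\cite{BCP,BCFS,MAGMA}, to build the descendant tree $\mathcal{T}(R)$ of $R=\langle 243,8\rangle$, taking the periodic bifurcation at the not coclass-settled vertex $\langle 729,54\rangle$ into account so that both the coclass-$2$ mainline and the coclass-$3$ side branches are produced, while pruning away every transfer kernel type different from $\mathrm{c}.21$. In parallel with the recursion I would compute, for every vertex $V$, the iterated IPAD of second order $\tau^{(2)}(V)$, and for every non-metabelian vertex $H$ the second derived quotient $H/H^{\prime\prime}$, recording over which second $3$-class group $H$ sits. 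The two immediate descendants of step size $1$ of $\langle 729,54\rangle$ that survive the pruning are then the groups $\langle 2187,307\rangle$ and $\langle 2187,308\rangle$, both of order $3^7$, derived length $3$ and nilpotency class $5$; since the fixed components $\tau_1(L_1)=(21^2,(21)^3)$ and $\tau_1(L_2)=((21^2)^4)$ carry no distinguishing information, the asymmetry of the type $\varkappa_1=(2034)$ between the capitulation targets $L_3$ and $L_4$ is what separates the two groups: for $\langle 2187,307\rangle$ one reads off $\tau_1(L_3)=(21^2,(21)^3)$, $\tau_1(L_4)=(21^2,(31)^3)$, and for $\langle 2187,308\rangle$ these two components are interchanged. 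Together with the metabelian member this yields $\mathrm{cov}(\langle 729,54\rangle)=\lbrace\langle 729,54\rangle,\langle 2187,307\rangle,\langle 2187,308\rangle\rbrace$, and both equivalences follow.

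The hard part will be, as in Theorem~\ref{thm:c18GS}, to justify the termination of the recursive tree construction at a finite order (for c.18 the order $3^{11}$ sufficed). This demands that one first establishes the stabilization of part of $\tau^{(2)}$ together with a deterministic growth of period $2$ of the remaining components along the infinite mainline and its coclass-$3$ companion branches, in the spirit of the periodic bifurcations of \cite[\S\ 21.2]{Ma5}; once this regularity is in hand, no vertex lying deeper in the pruned tree can reproduce the second-order IPAD of $\langle 2187,307\rangle$ or $\langle 2187,308\rangle$, and the finite MAGMA computation becomes a complete proof. Everything else — the actual evaluation of $\tau^{(2)}$ and of $H/H^{\prime\prime}$ for the finitely many relevant vertices, and the bookkeeping of defect, class and coclass — is routine.
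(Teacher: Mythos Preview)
Your proposal is correct and follows essentially the same approach as the paper: construct the pruned descendant tree of $R=\langle 243,8\rangle$ via the $p$-group generation algorithm in MAGMA, compute $\tau^{(2)}(V)$ and $H/H^{\prime\prime}$ along the way, terminate at order $3^{11}$ once stabilization plus deterministic growth of the second-order IPAD components is observed, and read off $\mathrm{cov}(\langle 729,54\rangle)=\lbrace\langle 729,54\rangle,\langle 2187,307\rangle,\langle 2187,308\rangle\rbrace$. The paper records the stabilized pattern slightly more explicitly---$\tau_1(L_1)=\tau_1(L_3)=\tau_1(L_4)=(\ast,(21)^3)$ on the coclass-$2$ side versus $(\ast,(31)^3)$ on the coclass-$3$ side---which is what rules out all deeper vertices and leaves only the two mixed-pattern leaves $\langle 2187,307\rangle$ and $\langle 2187,308\rangle$; your description of the $L_3$/$L_4$ asymmetry as the distinguishing feature is consistent with this and with Remark~\ref{rmk:c21GS}.
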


\begin{remark}
\label{rmk:c21GS}

From the viewpoint of group theory,
where a strict natural ordering can be imposed on the maximal subgroups of \(G\) in terms of generators,
the conditions of the two statements in Theorem
\ref{thm:c21GS}
are distinct.
From the viewpoint of number theory, however,
no canonical ordering exists and the conditions are indistinguishable.

\end{remark}

\begin{proof}
In analogy to the proof of Theorem
\ref{thm:c18GS},
we construct the descendant tree \(\mathcal{T}(R)\) of the root \(R=\langle 243,8\rangle\),
which is restricted to the coclass tree \(\mathcal{T}^2(R)\) in Figure
\ref{fig:TreeOverviewU}
by ignoring the bifurcation at the not coclass-settled vertex \(\langle 729,54\rangle\).
In Figure
\ref{fig:C21PrunedTreeU},
all periodic bifurcations
\cite[\S\ 21.2]{Ma5}
in the complete descendant tree are taken into consideration,
but the tree is pruned from all TKTs different from c.21.
In parallel computation with the recursive tree construction,
the iterated IPAD of second order \(\tau^{(2)}(V)\) is determined for each vertex \(V\) and
non-metabelian vertices \(H\) are checked for their second derived quotient \(H/H^{\prime\prime}\).
The construction can be terminated at order \(3^{11}\), because
several components of the \(2^{\mathrm{nd}}\) order IPAD become stable
and the remaining components reveal a deterministic growth: we have
\[\tau_1(L_1)=\tau_1(L_3)=\tau_1(L_4)=(\ast,\mathbf{(21)^3})\text{ for vertices of coclass }\mathbf{2}\]
and
\[\tau_1(L_1)=\tau_1(L_3)=\tau_1(L_4)=(\ast,\mathbf{(31)^3})\text{ for vertices of coclass }\mathbf{3}.\]
Therefore, the vertices \(\langle 2187,307\rangle\) and \(\langle 2187,308\rangle\)
are characterized uniquely by their iterated IPAD of second order,
and the cover of their common parent is given by
\[\mathrm{cov}(\langle 729,54\rangle)=\lbrace\langle 729,54\rangle,\langle 2187,307\rangle,\langle 2187,308\rangle\rbrace.\]
\end{proof}



\subsection{Real Quadratic Fields of Type c.21}
\label{ss:RQFc21GS}

\begin{proposition}
\label{prp:RQFc21GS}

(D.C. Mayer, Feb. \(2010\))\\
In the range \(0<d<10^7\) of fundamental discriminants \(d\)
of real quadratic fields \(K=\mathbb{Q}(\sqrt{d})\)
there exist precisely \(\mathbf{27}\) cases
with \(3\)-capitulation type \(\varkappa_1(K)=(2034)\).
The \(1^{\mathrm{st}}\) IPAD of \(\mathbf{25}\) among them 
is \(\tau^{(1)}(K)=\lbrack 11;21,\mathbf{22},21,21\rbrack\),
the remaining \(2\) cases have \(\tau^{(1)}(K)=\lbrack 11;21,\mathbf{33},21,21\rbrack\)
(and will be considered in section \S\
\ref{ss:c21ES1}).

\end{proposition}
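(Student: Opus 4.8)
The plan is to carry out an exhaustive computer search over fundamental discriminants, entirely parallel to the verification underlying Proposition~\ref{prp:RQFc18GS}. First I would enumerate all fundamental discriminants \(d\) with \(0<d<10^7\) and, for each, compute the \(3\)-class group \(\mathrm{Cl}_3(K)\) of \(K=\mathbb{Q}(\sqrt d)\); only the fields with \(\mathrm{Cl}_3(K)\simeq(3,3)\) survive this first sieve, since the capitulation type \(\varkappa_1(K)\) is defined only in that situation. For each surviving \(K\) I would construct the four unramified cyclic cubic extensions \(L_1,\dots,L_4\) of \(K\) (these correspond via class field theory to the four subgroups of index \(3\) in \(\mathrm{Cl}_3(K)\), and can be presented explicitly by the arithmetic of \cite[\S\ 5, pp.446--450]{Ma3}) and compute the class extension homomorphisms \(j_i:\mathrm{Cl}_3(K)\to\mathrm{Cl}_3(L_i)\). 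The kernels \(\ker(j_i)\) then determine the transfer kernel type, which I would record in normalized form as \(\varkappa_1(K)\); retaining those \(K\) with \(\varkappa_1(K)=(2034)\) produces the desired list, whose cardinality I claim is \(27\).

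Next, for every field \(K\) in that list I would compute the abelian type invariants of the four \(3\)-class groups \(\mathrm{Cl}_3(L_i)\), \(1\le i\le 4\), thereby obtaining the first-order IPAD \(\tau^{(1)}(K)=\lbrack\mathrm{Cl}_3(K);(\mathrm{Cl}_3(L_i))_i\rbrack\). Since the capitulation type already singles out \(L_2\) as the field with total capitulation (the position of the bold entry in \(\varkappa^{(1)}(K)=\lbrack G^\prime;H_2,\mathbf{G},H_3,H_4\rbrack\)), the three stable components satisfy \(\tau_1(L_1)=\tau_1(L_3)=\tau_1(L_4)=(9,3)\,\hat{=}\,21\), and only the polarized component \(\tau_1(L_2)\) varies; tallying the outcomes shows \(\tau_1(L_2)=(9,9)\,\hat{=}\,2^2\) in \(25\) cases and \(\tau_1(L_2)=(27,27)\,\hat{=}\,3^2\) in the remaining \(2\) cases, which is precisely the asserted split \(25+2=27\).

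For the implementation I would reuse the principalization algorithm written in PARI/GP \cite{PARI} and described in \cite[\S\ 5, pp.446--450]{Ma3}, which already handles the delicate steps of presenting the \(L_i\) by defining polynomials of small height and of tracing ideal classes of \(K\) up to \(L_i\); an independent recomputation with the class group and class field routines of MAGMA \cite{MAGMA} guards against implementation errors. The main difficulty is not conceptual but computational: constructing the sextic fields \(L_i\) and determining their class groups for every relevant \(d<10^7\) is the expensive step, and the class group computations are carried out under GRH unless the resulting bounds can be certified unconditionally. A secondary subtlety is reconciling the raw output with the published tables of \cite{Ma3}: exactly as for type c.18 (cf. the proof of Proposition~\ref{prp:RQFc18GS}), a misprinted frequency entry there may require correction, so that the count \(27\), together with the sub-counts \(25\) and \(2\), is the authoritative one.
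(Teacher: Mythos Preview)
Your proposal is correct and follows essentially the same approach as the paper: an exhaustive computer search over fundamental discriminants \(0<d<10^7\) using the principalization algorithm implemented in PARI/GP as described in \cite[\S\ 5, pp.446--450]{Ma3}, with the outcome recorded in \cite[Tbl.6.7, p.453]{Ma3}. The paper's own proof is just a one-sentence citation of that table and algorithm; your additional MAGMA cross-check and the remark about possible misprints (borrowed from the c.18 case) are extras not present here, but they do no harm.
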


\begin{proof}
The results
\cite[Tbl.6.7, p.453]{Ma3}
were computed by means of the free number theoretic computer algebra system PARI/GP
\cite{PARI}
using an implementation of our own principalization algorithm in a PARI script, as described in detail in
\cite[\S\ 5, pp.446--450]{Ma3}.
\end{proof}

\begin{theorem} (D.C. Mayer, Aug. \(2015\))
\label{thm:RQFc21GS}

\noindent
The \(\mathbf{25}\) real quadratic fields \(K=\mathbb{Q}(\sqrt{d})\) with the following discriminants \(d\),
\[
\begin{aligned}
   540\,365 &,&    945\,813 &,& 1\,202\,680 &,& 1\,695\,260 &,& 1\,958\,629, \\
3\,018\,569 &,& 3\,236\,657 &,& 3\,687\,441 &,& 4\,441\,560 &,& 5\,512\,252, \\
5\,571\,377 &,& 5\,701\,693 &,& 6\,027\,557 &,& 6\,049\,356 &,& 6\,054\,060, \\
6\,274\,609 &,& 6\,366\,029 &,& 6\,501\,608 &,& 6\,773\,557 &,& 7\,573\,868, \\
8\,243\,464 &,& 8\,251\,521 &,& 9\,054\,177 &,& 9\,162\,577 &,& 9\,967\,837, 
\end{aligned}
\]

\noindent
have \(3\)-class tower group either
\(G\simeq\langle 3^7,\mathbf{307}\rangle\) or
\(G\simeq\langle 3^7,\mathbf{308}\rangle\).

\noindent
In both cases, the length of the \(3\)-class tower of \(K\) is given by \(\ell_3(K)=3\).

\end{theorem}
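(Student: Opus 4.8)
The plan is to imitate, step for step, the proof of Theorem~\ref{thm:RQFc18GS}, reducing the whole statement to Theorem~\ref{thm:c21GS}.

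First I would appeal to Proposition~\ref{prp:RQFc21GS}: each of the listed discriminants $d$ produces a real quadratic field $K=\mathbb{Q}(\sqrt{d})$ with $3$-capitulation type $\varkappa_1(K)=(2034)$ and first-order IPAD $\tau^{(1)}(K)=\lbrack 1^2;21,\mathbf{2^2},21,21\rbrack$; these are exactly the $25$ among the $27$ fields isolated there. Hence the hypotheses of the discussion preceding Proposition~\ref{prp:c21GS} are met, so $\mathfrak{G}=\mathrm{G}_3^2(K)\simeq\langle 729,54\rangle$, and since $K$ has torsionfree unit rank $r=1$ with $\zeta_3\notin K$, part~(2) of that proposition already yields $\ell_3(K)\ge 3$. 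In particular the $3$-tower group $G=\mathrm{G}_3^\infty(K)$ lies in the cover $\mathrm{cov}(\langle 729,54\rangle)=\lbrace\langle 729,54\rangle,\langle 3^7,307\rangle,\langle 3^7,308\rangle\rbrace$ established in the proof of Theorem~\ref{thm:c21GS}, and its metabelian member $\langle 729,54\rangle$ is excluded, since its relation rank $d_2(\langle 729,54\rangle)\ge\mathrm{MR}(\langle 729,54\rangle)=4$ exceeds the Shafarevich bound $d_1(G)+r=2+1=3$ available for $K$.

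Next I would compute, with PARI/GP using the principalization algorithm of \cite[\S\ 5, pp.446--450]{Ma3} and cross-check with MAGMA \cite{MAGMA}, the second-order IPAD $\tau^{(2)}(K)$ of each of the $25$ fields. By the fixed components recorded before Theorem~\ref{thm:c21GS} one automatically has $\tau_1(L_1)=(21^2,(21)^3)$ and $\tau_1(L_2)=((21^2)^4)$, so only the two cubic components $\tau_1(L_3)$ and $\tau_1(L_4)$ must be extracted. The expected outcome is that in every one of the $25$ cases, after a suitable labelling of the two extensions, the pair $(\tau_1(L_3),\tau_1(L_4))$ equals $\bigl((21^2,(21)^3),(21^2,\mathbf{(31)^3})\bigr)$, that is, it matches the left-hand side of statement~(1) or of statement~(2) of Theorem~\ref{thm:c21GS}.

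Finally I would invoke the biconditionals of Theorem~\ref{thm:c21GS} to conclude $G\simeq\langle 3^7,307\rangle$ or $G\simeq\langle 3^7,308\rangle$; by Remark~\ref{rmk:c21GS} no arithmetically canonical ordering of $L_3$ and $L_4$ is available, so the computation cannot decide which of the two occurs, which is why only the disjunction can be asserted. Since both groups have derived length $\mathrm{dl}(G)=3$ and $\ell_3(K)=\mathrm{dl}(G)$, combining this with Proposition~\ref{prp:c21GS}(2) gives $\ell_3(K)=3$ exactly. The only genuine obstacle here is computational: one must determine the $3$-class groups $\mathrm{Cl}_3(L_i)$ of the degree-$6$ fields $L_i$ reliably for every $d<10^7$, and it is precisely the cost of this step that forces the range restriction, exactly as noted in Remark~\ref{rmk:RQFc18GS}.
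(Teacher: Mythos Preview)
Your proposal is correct and follows essentially the same route as the paper's own proof: verify computationally that all $25$ fields share the common capitulation type, first IPAD, and second-order IPAD, and then invoke Theorem~\ref{thm:c21GS} together with Remark~\ref{rmk:c21GS} to explain why only the disjunction $\langle 3^7,307\rangle$ or $\langle 3^7,308\rangle$ can be asserted. One small slip in your closing remark: the computational bottleneck for the second-order IPAD $\tau_1(L_i)$ is not the $3$-class groups of the degree-$6$ fields $L_i$ themselves but of their unramified cubic extensions, which are degree-$18$ number fields over~$\mathbb{Q}$.
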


\begin{proof}
Since all these real quadratic fields \(K=\mathbb{Q}(\sqrt{d})\) have
\(3\)-capitulation type \(\varkappa_1(K)=(2034)\) and \(1^{\mathrm{st}}\) IPAD  
\(\tau^{(1)}(K)=\lbrack 1^2;21,\mathbf{2^2},(21)^2\rbrack\),
and all \(25\) fields have unordered and indistinguishable \(2^{\mathrm{nd}}\) IPAD
\[\tau_1(L_1)=(21^2,\mathbf{(31)^3}),\ \tau_1(L_2)=((21^2)^4),\ \tau_1(L_3)=(21^2,(21)^3),\ \tau_1(L_4)=(21^2,(21)^3),\]
the claim is a consequence of Theorem
\ref{thm:c21GS}.
\end{proof}

\begin{remark}
\label{rmk:RQFc21GS}
Unpublished results of M.R. Bush show that there are \(4377\) real quadratic fields
with discriminants \(0<d<10^9\) having the first IPAD in Proposition
\ref{prp:RQFc21GS}.
For the remaining \(4352\) cases outside of the range \(0<d<10^7\),
we can only identify the \(3\)-tower group up to the two possibilities in Theorem
\ref{thm:RQFc21GS}.
However, according to Proposition
\ref{prp:c21GS}
and Theorem
\ref{thm:c21GS},
we know that the length of the \(3\)-tower is \(\ell_3(K)=3\). 
\end{remark}



\noindent
Figure
\ref{fig:C21PrunedTreeU}
visualizes the groups in Theorems
\ref{thm:c21GS}
and
\ref{thm:c21ES1}
and their population in Theorems
\ref{thm:RQFc21GS}
and
\ref{thm:RQFc21ES1}.

{\tiny

\begin{figure}[hb]
\caption{Non-metabelian \(3\)-tower groups \(G\) on the pruned tree \(\mathcal{T}_\ast(\langle 243,8\rangle)\)}
\label{fig:C21PrunedTreeU}


\setlength{\unitlength}{0.8cm}
\begin{picture}(18,22)(-6,-21)

\put(-5,0.5){\makebox(0,0)[cb]{Order}}
\put(-5,0){\line(0,-1){18}}
\multiput(-5.1,0)(0,-2){10}{\line(1,0){0.2}}
\put(-5.2,0){\makebox(0,0)[rc]{\(243\)}}
\put(-4.8,0){\makebox(0,0)[lc]{\(3^5\)}}
\put(-5.2,-2){\makebox(0,0)[rc]{\(729\)}}
\put(-4.8,-2){\makebox(0,0)[lc]{\(3^6\)}}
\put(-5.2,-4){\makebox(0,0)[rc]{\(2\,187\)}}
\put(-4.8,-4){\makebox(0,0)[lc]{\(3^7\)}}
\put(-5.2,-6){\makebox(0,0)[rc]{\(6\,561\)}}
\put(-4.8,-6){\makebox(0,0)[lc]{\(3^8\)}}
\put(-5.2,-8){\makebox(0,0)[rc]{\(19\,683\)}}
\put(-4.8,-8){\makebox(0,0)[lc]{\(3^9\)}}
\put(-5.2,-10){\makebox(0,0)[rc]{\(59\,049\)}}
\put(-4.8,-10){\makebox(0,0)[lc]{\(3^{10}\)}}
\put(-5.2,-12){\makebox(0,0)[rc]{\(177\,147\)}}
\put(-4.8,-12){\makebox(0,0)[lc]{\(3^{11}\)}}
\put(-5.2,-14){\makebox(0,0)[rc]{\(531\,441\)}}
\put(-4.8,-14){\makebox(0,0)[lc]{\(3^{12}\)}}
\put(-5.2,-16){\makebox(0,0)[rc]{\(1\,594\,323\)}}
\put(-4.8,-16){\makebox(0,0)[lc]{\(3^{13}\)}}
\put(-5.2,-18){\makebox(0,0)[rc]{\(4\,782\,969\)}}
\put(-4.8,-18){\makebox(0,0)[lc]{\(3^{14}\)}}
\put(-5,-18){\vector(0,-1){2}}

\put(0.1,0.2){\makebox(0,0)[lb]{\(\langle 8\rangle\)}}
\put(0.1,-1.8){\makebox(0,0)[lb]{\(\langle 54\rangle\) (not coclass-settled)}}
\put(1.1,-2.8){\makebox(0,0)[lb]{\(1^{\text{st}}\) bifurcation}}
\put(0.1,-3.8){\makebox(0,0)[lb]{\(\langle 303\rangle\)}}
\put(0.1,-5.8){\makebox(0,0)[lb]{\(1;1\)}}
\put(0.1,-7.8){\makebox(0,0)[lb]{\(1;1\)}}
\put(0.1,-9.8){\makebox(0,0)[lb]{\(1;1\)}}
\put(0.1,-11.8){\makebox(0,0)[lb]{\(1;1\)}}
\multiput(0,0)(0,-2){7}{\circle*{0.2}}
\multiput(0,0)(0,-2){6}{\line(0,-1){2}}
\put(0,-12){\vector(0,-1){2}}
\put(-0.2,-14.2){\makebox(0,0)[rt]{\(\mathcal{T}_\ast^2(\langle 243,8\rangle)\)}}

\put(-2,-4.2){\makebox(0,0)[ct]{\(\langle 308\rangle\)}}
\put(-2,-8.2){\makebox(0,0)[ct]{\(1;7\)}}
\put(-2,-12.2){\makebox(0,0)[ct]{\(1;7\)}}
\multiput(0,-2)(0,-4){3}{\line(-1,-1){2}}
\multiput(-2.05,-4.05)(0,-4){3}{\framebox(0.1,0.1){}}

\put(-1,-4.2){\makebox(0,0)[ct]{\(\langle 307\rangle\)}}
\multiput(0,-2)(0,-4){1}{\line(-1,-2){1}}
\multiput(-1.05,-4.05)(0,-4){1}{\framebox(0.1,0.1){}}

\put(0,-2){\line(1,-1){4}}

\put(4.1,-5.8){\makebox(0,0)[lb]{\(2;3\)}}
\put(4.1,-7.8){\makebox(0,0)[lb]{\(1;1\) (not coclass-settled)}}
\put(5.1,-8.8){\makebox(0,0)[lb]{\(2^{\text{nd}}\) bifurcation}}
\put(4.1,-9.8){\makebox(0,0)[lb]{\(1;1\)}}
\put(4.1,-11.8){\makebox(0,0)[lb]{\(1;1\)}}
\put(4.1,-13.8){\makebox(0,0)[lb]{\(1;1\)}}
\multiput(3.95,-6.05)(0,-2){5}{\framebox(0.1,0.1){}}
\multiput(4,-6)(0,-2){4}{\line(0,-1){2}}
\put(4,-14){\vector(0,-1){2}}
\put(3.8,-16.2){\makebox(0,0)[rt]{\(\mathcal{T}_\ast^3(\langle 729,54\rangle-\#2;3)\)}}

\put(2,-10.2){\makebox(0,0)[ct]{\(1;8\)}}
\put(2,-14.2){\makebox(0,0)[ct]{\(1;7\)}}
\multiput(4,-8)(0,-4){2}{\line(-1,-1){2}}
\multiput(1.95,-10.05)(0,-4){2}{\framebox(0.1,0.1){}}

\put(3,-10.2){\makebox(0,0)[ct]{\(1;7\)}}
\multiput(4,-8)(0,-4){1}{\line(-1,-2){1}}
\multiput(2.95,-10.05)(0,-4){1}{\framebox(0.1,0.1){}}

\put(4,-8){\line(1,-1){4}}

\put(8.1,-11.8){\makebox(0,0)[lb]{\(2;1\)}}
\put(8.1,-13.8){\makebox(0,0)[lb]{\(1;1\) (not coclass-settled)}}
\put(9.1,-14.8){\makebox(0,0)[lb]{\(3^{\text{rd}}\) bifurcation}}
\put(8.1,-15.8){\makebox(0,0)[lb]{\(1;1\)}}
\multiput(7.95,-12.05)(0,-2){3}{\framebox(0.1,0.1){}}
\multiput(8,-12)(0,-2){2}{\line(0,-1){2}}
\put(8,-16){\vector(0,-1){2}}
\put(7.8,-18.2){\makebox(0,0)[rt]{\(\mathcal{T}_\ast^4(\langle 729,54\rangle-\#2;3-\#1;1-\#2;1)\)}}

\put(6,-16.2){\makebox(0,0)[ct]{\(1;8\)}}
\multiput(8,-14)(0,-4){1}{\line(-1,-1){2}}
\multiput(5.95,-16.05)(0,-4){1}{\framebox(0.1,0.1){}}

\put(7,-16.2){\makebox(0,0)[ct]{\(1;7\)}}
\multiput(8,-14)(0,-4){1}{\line(-1,-2){1}}
\multiput(6.95,-16.05)(0,-4){1}{\framebox(0.1,0.1){}}

\put(8,-14){\line(1,-1){4}}

\put(12.1,-17.8){\makebox(0,0)[lb]{\(2;1\)}}
\multiput(11.95,-18.05)(0,-2){1}{\framebox(0.1,0.1){}}
\put(12,-18){\vector(0,-1){2}}
\put(11.8,-19.9){\makebox(0,0)[rt]{\(\mathcal{T}_\ast^5(\langle 729,54\rangle-\#2;3-\#1;1-\#2;1-\#1;1-\#2;1)\)}}



\multiput(0.3,-2.2)(0,-4){3}{\oval(1,1.5)}
\put(-0.6,-2.1){\makebox(0,0)[rc]{\underbar{\textbf{\(\#4377\)}}}}
\put(-0.6,-6.1){\makebox(0,0)[rc]{\underbar{\textbf{\(\#146\)}}}}
\put(-0.6,-10.1){\makebox(0,0)[rc]{\underbar{\textbf{\(\#5\)}}}}

\multiput(-2,-8.25)(0,-4){2}{\oval(1,1.5)}
\put(-2,-9.3){\makebox(0,0)[cc]{\underbar{\textbf{\(+1\,001\,957\)}}}}
\put(-2,-13.3){\makebox(0,0)[cc]{\underbar{\textbf{\(+407\,086\,012\)}}}}

\multiput(-1.5,-4.25)(0,-4){1}{\oval(2,1.5)}
\put(-1.5,-5.3){\makebox(0,0)[cc]{\underbar{\textbf{\(+540\,365\)}}}}

\multiput(2.5,-10.25)(4,-6){2}{\oval(2,1.5)}
\put(2.5,-11.3){\makebox(0,0)[cc]{\underbar{\textbf{\(+25\,283\,701\)}}}}

\put(3,-15){\vector(-1,1){0.5}}
\put(4,-15.3){\makebox(0,0)[cc]{\underbar{\textbf{\(+116\,043\,324\)}}}}
\put(5,-15.5){\vector(1,-1){0.5}}

\multiput(2,-14.25)(4,-6){1}{\oval(1,1.5)}

\end{picture}

\end{figure}

}



\subsection{The 1\({}^{\text{st}}\) Excited State of Capitulation Type c.21}
\label{ss:c21ES1}

As before, \(K\) is an algebraic number field
with \(3\)-class group \(\mathrm{Cl}_3(K)\) of type \((3,3)\hat{=}1^2\).
Let \(L_1,\ldots,L_4\) be the four unramified cyclic cubic extensions of \(K\),
and suppose that the \(1^{\mathrm{st}}\) order Artin pattern \(\mathrm{AP}^{(1)}(K)\) of \(K\)
is given by the IPAD 
\(\tau^{(1)}(K)=\lbrack 1^2;21,\mathbf{3^2},21,21\rbrack\)
and the IPOD
\(\varkappa^{(1)}(K)=\lbrack G^\prime;H_2,\mathbf{G},H_3,H_4\rbrack\),
i.e. \(\tau_1(K)=((9,3),(27,27),(9,3),(9,3))\) are the type invariants of the \(3\)-class groups of the \(L_i\)
and \(\varkappa_1(K)=(2034)\) is the \(3\)-capitulation type of \(K\) in the \(L_i\).

\begin{proposition}
\label{prp:c21ES1}

(D.C. Mayer, \(2010\))

\begin{enumerate}

\item
The second \(3\)-class group \(\mathfrak{G}=\mathrm{G}_3^2(K)\) of \(K\)
is isomorphic to the metabelian \(3\)-group \(\langle 2187,303\rangle-\#1;1\)
and thus its relation rank satisfies the inequality
\(d_2(\mathfrak{G})\ge d_1(\mathfrak{G})+2\).

\item
Consequently,
if \(K\) is a number field with torsionfree unit rank \(r=1\)
and does not contain the third roots of unity
(in particular, if \(K=\mathbb{Q}(\sqrt{d})\) is a real quadratic field),
then the length of the \(3\)-class field tower of \(K\) must be \(\ell_3(K)\ge 3\).

\end{enumerate}

\end{proposition}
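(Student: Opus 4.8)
The plan is to run the same two-step identification that was used for Propositions \ref{prp:c21GS} and \ref{prp:c18ES1}: first locate the second \(3\)-class group \(\mathfrak{G}=\mathrm{G}_3^2(K)\) on a coclass-\(2\) tree by means of the first-order Artin pattern \(\mathrm{AP}^{(1)}(K)\), and then invoke the corrected Shafarevich bound (Theorem \ref{thm:Shafarevich}) to rule out \(\mathfrak{G}\) as the full tower group.

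First I would show that \(\mathrm{cc}(\mathfrak{G})=2\), which follows in two independent ways exactly as in Proposition \ref{prp:c21GS}. From \(\tau_1(\mathfrak{G})=\tau_1(K)=(21,\mathbf{3^2},(21)^2)\) alone: by items 1) and 3) of \cite[Thm.3.2, p.291]{Ma6}, coclass \(1\) would force three components of type \(1^2\) and coclass \(\ge 3\) would force two components of type \(1^3\), and the present \(\tau_1(\mathfrak{G})\) has neither. From \(\varkappa_1(\mathfrak{G})=(2034)\) alone: by \cite[Thm.2.5, p.479]{Ma2}, coclass \(1\) requires three total kernels, while by \cite[Cor.3.0.2, p.772]{BuMa}, coclass \(\ge 3\) forces a \(2\)-cycle in the TKT (every such group being a descendant of \(\langle 3^5,3\rangle\), whose TKT \((2100)\) contains a \(2\)-cycle); neither holds here, since \(\varkappa_1(\mathfrak{G})\) has exactly one \(0\) and no \(2\)-cycle.

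Next I would pin down the tree and then the precise vertex. The polarized component \(\mathbf{3^2}\), this time of order \(3^6\) rather than \(3^4\), cannot occur for a sporadic group by item 2) of \cite[Thm.3.1, p.290]{Ma6}, so \(\mathfrak{G}\) lies on one of the three coclass-\(2\) trees with metabelian mainline, rooted at \(\langle 3^5,3\rangle\), \(\langle 3^5,6\rangle\), \(\langle 3^5,8\rangle\), whose stable TTT-triples are \(((1^3)^2,21)\), \((1^3,(21)^2)\), \(((21)^3)\) respectively; the stable part \((21,(21)^2)\) of \(\tau_1(\mathfrak{G})\) then forces the root \(\langle 3^5,8\rangle\), again by item 2) of \cite[Thm.3.2, p.291]{Ma6}. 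This already yields \(d_2(\mathfrak{G})\ge\mathrm{MR}(\mathfrak{G})=4=d_1(\mathfrak{G})+2\). The polarization \(3^2\) of order \(3^6\) determines \(\mathrm{cl}(\mathfrak{G})=6\) and hence \(\lvert\mathfrak{G}\rvert=3^8\), the defect-\(1\) alternative being excluded because it only occurs for type G.16, \((2134)\); and the polarization \(0\) of \(\varkappa_1(\mathfrak{G})=(2034)\) with stable components \((234)\) singles out the mainline vertex of order \(3^8\) on \(\mathcal{T}^2(\langle 3^5,8\rangle)\), which by Figure \ref{fig:C21PrunedTreeU} is \(\langle 2187,303\rangle-\#1;1\).

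Part (2) is then immediate from Theorem \ref{thm:Shafarevich}: for a number field \(K\) with torsionfree unit rank \(r=1\) and \(\zeta_3\notin K\), one has \(d_2(G)\le d_1(G)+r=3<4\le d_2(\mathfrak{G})\), so \(G=\mathrm{G}_3^\infty(K)\not\simeq\mathfrak{G}\), and since \(G/G^{\prime\prime}\simeq\mathfrak{G}\) has derived length \(2\), it follows that \(\ell_3(K)=\mathrm{dl}(G)\ge 3\). I expect the one genuinely delicate point to be the order determination in the third step, i.e.\ verifying that no descendant of \(\langle 3^5,8\rangle\) realizing this Artin pattern has smaller defect of commutativity, which would let \(\mathrm{cl}(\mathfrak{G})\) and \(\lvert\mathfrak{G}\rvert\) shrink; this amounts to reading off from Figure \ref{fig:TreeOverviewU}, together with the period-\(2\) branch structure, that the defect-\(1\) vertices all carry type G.16 rather than c.21, and it is the exact analogue of the step that formed the crux of Proposition \ref{prp:c18GS}.
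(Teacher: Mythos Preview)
Your proposal is correct and follows essentially the same approach as the paper's own proof, which abbreviates by referring back to Proposition~\ref{prp:c21GS} and then adjusts only the polarized component from \(2^2\) to \(3^2\) (yielding \(\mathrm{cl}(\mathfrak{G})=6\), \(\lvert\mathfrak{G}\rvert=3^8\)) before reading off the mainline vertex \(\langle 2187,303\rangle-\#1;1\) from Figure~\ref{fig:C21PrunedTreeU}. Your closing remark about the defect-\(1\) exclusion is exactly the step the paper handles by the clause ``the defect \(k=1\) is only possible for type G.16'' in the proof of Proposition~\ref{prp:c21GS}.
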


\begin{proof}

A great deal of the proof is similar to the proof of Proposition
\ref{prp:c21GS}.

\begin{enumerate}

\item
First, we prove that \(\mathfrak{G}\) must have coclass \(\mathrm{cc}(\mathfrak{G})=2\).
Next, we show that \(\mathfrak{G}\) must be a descendant of \(\langle 3^5,8\rangle\),
this time using the polarized component \(3^2\) of order \(3^6\) of \(\tau_1(\mathfrak{G})\).
Again, we have
\(d_2(\mathfrak{G})\ge\mathrm{MR}(\mathfrak{G})=4=d_1(\mathfrak{G})+2\),
since the \(p\)-multiplicator rank is a lower bound for the relation rank.
Finally, the polarization \(3^2\) of \(\tau_1(\mathfrak{G})=(21,3^2,(21)^2)\)
determines the nilpotency class \(\mathrm{cl}(\mathfrak{G})=6\)
and thus also the order \(\lvert\mathfrak{G}\rvert=3^8\).
The polarization \(0\) of \(\varkappa_1(\mathfrak{G})=(2034)\) with stable components \((234)\)
unambiguously identifies the mainline vertex of order \(3^8\),
which is \(\langle 2187,303\rangle-\#1;1\), according to Figure
\ref{fig:C21PrunedTreeU}.

\item
As before,
\cite[Thm.6, p.140]{Sh}
implies that the relation rank \(d_2(G)\) of the \(3\)-class tower group
\(G:=\mathrm{Gal}(\mathrm{F}_3^\infty(K)\vert K)\)
of a number field \(K\) with unit rank \(r=r_1+r_2-1=1\) and \(\zeta_3\notin K\)
satisfies \(d_2(G)\le d_1(G)+r=2+1=3\),
whence \(G\not\simeq\mathfrak{G}\) and \(\ell_3(K)=\mathrm{dl}(G)\ge 3\).
\end{enumerate}
\end{proof}

\noindent
Suppose now that, under the assumptions preceding Proposition
\ref{prp:c21ES1},
\(G=\mathrm{G}_3^\infty(K)\) denotes the \(3\)-class tower group of \(K\) and
we are additionally given the \(2^{\mathrm{nd}}\) order IPAD of \(K\),
\[\tau^{(2)}(K)=
\lbrack 1^2;((21;\tau_1(L_1)),(\mathbf{3^2};\tau_1(L_2)),(21;\tau_1(L_3)),(21;\tau_1(L_4))\rbrack,\]
with fixed \(\tau_1(L_2)=((321)^4)\).


\begin{theorem} (D.C. Mayer, Aug. \(2015\))
\label{thm:c21ES1}

\begin{enumerate}

\item
\(\tau_1(L_i)=(321,\mathbf{(21)^3)}\) for \(i=1,3,4\)
\(\Longleftrightarrow\)\\
\(G\simeq\langle 3^7,303\rangle-\mathbf{\#1;1}\), of order \(\lvert G\rvert=3^8\),
or\\
\(G\simeq\langle 3^7,303\rangle-\mathbf{\#1;1-\#1;7}\), of order \(\lvert G\rvert=3^9\).

\item
\(\tau_1(L_i)=(321,\mathbf{(31)^3})\) for \(i=1,3,4\)
\(\Longleftrightarrow\)\\
\(G\simeq\langle 3^6,54\rangle-\mathbf{\#2;3-\#1;1}\), of order \(\lvert G\rvert=3^9\),
or\\
\(G\simeq\langle 3^6,54\rangle-\mathbf{\#2;3-\#1;1-\#1;7}\), of order \(\lvert G\rvert=3^{10}\),
or\\
\(G\simeq\langle 3^6,54\rangle-\mathbf{\#2;3-\#1;1-\#1;8}\), of order \(\lvert G\rvert=3^{10}\).

\end{enumerate}

\end{theorem}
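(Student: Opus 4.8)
The plan is to follow the proof of Theorem~\ref{thm:c18ES1} almost verbatim, with the coclass tree $\mathcal{T}^2(\langle 243,6\rangle)$ replaced by $\mathcal{T}^2(\langle 243,8\rangle)$. By Proposition~\ref{prp:c21ES1}, the prescribed first order Artin pattern already forces the metabelianization of the $3$-tower group to be the mainline vertex $\mathfrak{G}=G/G^{\prime\prime}=\langle 3^7,303\rangle-\#1;1$ of order $3^8$, so $G$ must lie in the cover $\mathrm{cov}(\mathfrak{G})$; everything therefore reduces to determining this cover together with the iterated IPADs of second order of its members. Concretely, I would run the $p$-group generation algorithm of Newman~\cite{Nm} and O'Brien~\cite{Ob}, as implemented in MAGMA~\cite{BCP,BCFS,MAGMA}, to construct recursively the descendant tree of the root $R=\langle 243,8\rangle$, retaining \emph{all} periodic bifurcations~\cite[\S\ 21.2]{Ma5} of the complete descendant tree but pruning away every vertex whose transfer kernel type differs from c.21, exactly as displayed in Figure~\ref{fig:C21PrunedTreeU}. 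In parallel with the recursion I would compute, for each surviving vertex $V$, the iterated IPAD of second order $\tau^{(2)}(V)$, whose fixed component is $\tau_1(L_2)=((321)^4)$, and for each non-metabelian vertex $H$ I would test whether $H/H^{\prime\prime}\simeq\mathfrak{G}$.

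The next step is to see that the recursion may be truncated at order $3^{11}$. As for the ground state in Theorem~\ref{thm:c21GS} and in the c.18 cases, along the pruned tree several components of $\tau^{(2)}(V)$ stabilise, while the remaining ones grow in a purely deterministic, period-$2$ fashion tied to the coclass of $V$: one gets $\tau_1(L_i)=(\ast,(21)^3)$ for vertices of coclass $2$ and $\tau_1(L_i)=(\ast,(31)^3)$ for vertices of coclass $3$, $i=1,3,4$. Combined with the periodicity of branches of length $2$ and with the fact that descending further down any coclass-$c$ subtree eventually makes the metabelianization $H/H^{\prime\prime}$ overshoot $\mathfrak{G}$ (so that such $H$ falls out of $\mathrm{cov}(\mathfrak{G})$), this reduces the determination of $\mathrm{cov}(\mathfrak{G})$ to a finite search, which yields exactly the five groups named in the theorem: the metabelian $\mathfrak{G}$ together with $\langle 3^7,303\rangle-\#1;1-\#1;7$, and $\langle 3^6,54\rangle-\#2;3-\#1;1$ together with $\langle 3^6,54\rangle-\#2;3-\#1;1-\#1;7$ and $\langle 3^6,54\rangle-\#2;3-\#1;1-\#1;8$. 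Reading off which of these five realises the value $(21)^3$ and which realises $(31)^3$ in the variable part of the second order IPAD then gives the two asserted equivalences, the first with a batch of two candidates for $G$ and the second with a batch of three.

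The main obstacle is the rigorous justification of the truncation at order $3^{11}$: one must show that the stabilisation of the stable IPAD components and the deterministic growth of the remaining ones genuinely persist to all higher orders, so that $\mathrm{cov}(\mathfrak{G})$ cannot pick up additional members at larger orders. This rests on the periodicity of the branches (of length $2$, setting in with $\mathcal{B}(7)$) and on the pruning argument of~\cite[\S\ 21.2]{Ma5}, specialised to the present tree, together with control of the order of $H/H^{\prime\prime}$ along every coclass-$c$ subtree hanging off $\mathcal{T}^2(\langle 243,8\rangle)$. A secondary subtlety --- already noted in Remark~\ref{rmk:c21GS} --- is that the iterated IPAD of second order does \emph{not} separate the members within a batch: a strict group-theoretic ordering of the maximal subgroups of $G$ would distinguish them, but number-theoretically there is no canonical ordering of the four unramified cyclic cubic fields $L_i$, which is exactly why the conclusion must be stated in terms of batches rather than a unique identification of $G$.
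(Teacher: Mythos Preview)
Your proposal is correct and follows essentially the same approach as the paper's own proof: construct the descendant tree of $\langle 243,8\rangle$ with all bifurcations retained but pruned to TKT c.21, compute $\tau^{(2)}(V)$ for each vertex and check $H/H^{\prime\prime}$ for non-metabelian $H$, thereby determining $\mathrm{cov}(\mathfrak{G})$ as exactly the five listed groups, which then fall into the two batches according to whether the variable IPAD components are $(21)^3$ or $(31)^3$. You in fact supply more justification for the truncation at order $3^{11}$ and for the batch phenomenon than the paper does (the paper's proof is terse and simply refers back to Theorem~\ref{thm:c21GS}), but the underlying computation and logic are identical.
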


\begin{proof}
Similar as in the proof of Theorem
\ref{thm:c21GS},
we construct the descendant tree \(\mathcal{T}(R)\) of the root \(R=\langle 243,8\rangle\),
determine the iterated IPAD of second order \(\tau^{(2)}(V)\) for each vertex \(V\),
and check the second derived quotient \(H/H^{\prime\prime}\) of non-metabelian vertices \(H\).

It turns out that the cover of the common metabelianization
\(\mathfrak{G}=\langle 3^7,303\rangle-\#1;1=G/G^{\prime\prime}\)
of all candidates \(G\) for the \(3\)-tower group is given by
\(\mathrm{cov}(\langle 3^7,303\rangle-\#1;1)=\)\\
\(\lbrace\langle 3^7,303\rangle-\#1;1,\ \langle 3^7,303\rangle-\#1;1-\#1;7,\)\\
\(\langle 3^6,54\rangle-\#2;3-\#1;1,\ \langle 3^6,54\rangle-\#2;3-\#1;1-\#1;7,\ \langle 3^6,54\rangle-\#2;3-\#1;1-\#1;8\rbrace\).

The various vertices 
are not characterized uniquely by their iterated IPAD of second order.
Rather they can be identified as batches of two resp. three vertices
in the claimed manner.
\end{proof}

\begin{corollary} 
\label{cor:c21ES1}

(D.C. Mayer, Aug. \(2015\))\\
If \(K\) has torsionfree unit rank \(1\) and does not contain a primitive third root of unity, then

\begin{enumerate}

\item
\(\tau_1(L_i)=(321,\mathbf{(21)^3)}\) for \(i=1,3,4\)
\(\Longleftrightarrow\)\\
\(G\simeq\langle 3^7,303\rangle-\mathbf{\#1;1-\#1;7}\).

\item
\(\tau_1(L_i)=(321,\mathbf{(31)^3})\) for \(i=1,3,4\)
\(\Longleftrightarrow\)\\
\(G\simeq\langle 3^6,54\rangle-\mathbf{\#2;3-\#1;1-\#1;7}\) or\\
\(G\simeq\langle 3^6,54\rangle-\mathbf{\#2;3-\#1;1-\#1;8}\).

\end{enumerate}

\noindent
All these groups have derived length \(\mathrm{dl}(G)=3\).

\end{corollary}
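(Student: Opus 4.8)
The plan is to derive this corollary from Theorem~\ref{thm:c21ES1} by removing from each of its two candidate lists the vertices that are incompatible with the corrected Shafarevich bound of Theorem~\ref{thm:Shafarevich}, in complete analogy with the proof of Corollary~\ref{cor:c18ES1} for capitulation type c.18. Concretely, I would first record the two invariants of the \(3\)-tower group \(G=\mathrm{G}_3^\infty(K)\) that enter the Shafarevich inequality: since the \(3\)-class group \(\mathrm{Cl}_3(K)\) is of type \((3,3)\) by the standing assumptions preceding Proposition~\ref{prp:c21ES1}, the generator rank equals \(d_1(G)=d_1(G/G^\prime)=d_1(\mathrm{Cl}_3(K))=2\), and since \(K\) has torsionfree unit rank \(r=r_1+r_2-1=1\) and contains no primitive third root of unity, Theorem~\ref{thm:Shafarevich} gives \(d_2(G)\le d_1(G)+r=3\).

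In each of the two cases I would then invoke the corresponding statement of Theorem~\ref{thm:c21ES1}. In case~(1) the hypothesis \(\tau_1(L_i)=(321,(21)^3)\) for \(i=1,3,4\) is equivalent to \(G\simeq\langle 3^7,303\rangle-\#1;1\) or \(G\simeq\langle 3^7,303\rangle-\#1;1-\#1;7\); in case~(2) the hypothesis \(\tau_1(L_i)=(321,(31)^3)\) for \(i=1,3,4\) is equivalent to \(G\) being one of \(\langle 3^6,54\rangle-\#2;3-\#1;1\), \(\langle 3^6,54\rangle-\#2;3-\#1;1-\#1;7\), \(\langle 3^6,54\rangle-\#2;3-\#1;1-\#1;8\). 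Among these five groups, exactly two, the metabelianization \(\mathfrak{G}=\langle 3^7,303\rangle-\#1;1=G/G^{\prime\prime}\) and the non-metabelian vertex \(\langle 3^6,54\rangle-\#2;3-\#1;1\), are infinitely capable (nuclear rank \(\mathrm{NR}(G)\ge 1\), with proper descendants in the tree of Figure~\ref{fig:C21PrunedTreeU}), and both have \(p\)-multiplicator rank \(\mathrm{MR}(G)=4\); since the \(p\)-multiplicator rank is a lower bound for the relation rank, \(d_2(G)\ge 4\) for these two, which contradicts \(d_2(G)\le 3\). Hence neither can be the \(3\)-tower group of such a field \(K\), so only \(\langle 3^7,303\rangle-\#1;1-\#1;7\) survives in case~(1), and only \(\langle 3^6,54\rangle-\#2;3-\#1;1-\#1;7\) and \(\langle 3^6,54\rangle-\#2;3-\#1;1-\#1;8\) survive in case~(2). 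The converse implications require nothing new, since by Theorem~\ref{thm:c21ES1} every surviving group realizes the asserted second order IPAD. Finally, each surviving \(G\) lies in \(\mathrm{cov}(\mathfrak{G})\) with \(G\not\simeq\mathfrak{G}\), so \(G^{\prime\prime}\neq 1\) and \(\mathrm{dl}(G)\ge 3\); that \(G^{\prime\prime\prime}=1\), whence \(\mathrm{dl}(G)=3\), is read off from the explicit recursive construction of the cover performed in the proof of Theorem~\ref{thm:c21ES1}.

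The only ingredient that is not a verbatim transcription of the c.18 argument, and hence the only place where I expect to do genuine work, is verifying that exactly the two vertices \(\langle 3^7,303\rangle-\#1;1\) and \(\langle 3^6,54\rangle-\#2;3-\#1;1\), and none of the other three, have \(p\)-multiplicator rank \(4\) and are the infinitely capable members of \(\mathrm{cov}(\langle 3^7,303\rangle-\#1;1)\); this is a finite computation with the \(p\)-group generation algorithm, run alongside the tree construction already used for Theorem~\ref{thm:c21ES1}. I anticipate no conceptual obstacle, since the architecture of \(\mathrm{cov}(\langle 3^7,303\rangle-\#1;1)\) mirrors that of \(\mathrm{cov}(\langle 3^7,285\rangle-\#1;1)\) exactly, with the root \(\langle 243,8\rangle\) in place of \(\langle 243,6\rangle\).
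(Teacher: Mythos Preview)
Your proposal is correct and follows essentially the same argument as the paper: start from the candidate list of Theorem~\ref{thm:c21ES1}, eliminate the two infinitely capable vertices \(\langle 3^7,303\rangle-\#1;1\) and \(\langle 3^6,54\rangle-\#2;3-\#1;1\) via their \(p\)-multiplicator rank \(4\) and the Shafarevich bound \(d_2(G)\le d_1(G)+r=3\), and keep the remaining three groups. Your additional remarks on the converse direction and on reading off \(\mathrm{dl}(G)=3\) from the cover computation are a slight elaboration of what the paper leaves implicit, but the strategy is identical.
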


\begin{proof}
The two infinitely capable groups
\(\langle 3^7,303\rangle-\mathbf{\#1;1}\)
and
\(\langle 3^6,54\rangle-\mathbf{\#2;3-\#1;1}\)
have \(p\)-multiplicator rank \(\mathrm{MR}(G)=4\) and
thus relation rank \(d_2(G)\ge 4\),
and consequently cannot satisfy the Shafarevich inequality \(d_2(G)\le d_1(G)+r=2+1=3\)
\cite[Thm.6, p.140]{Sh}
for a field \(K\) with unit rank \(r=r_1+r_2-1=1\) and \(\zeta_3\notin K\).
\end{proof}



\subsection{Real Quadratic Fields of Type c.21\(\uparrow\)}
\label{ss:RQFc21ES1}

\begin{proposition} 
\label{prp:RQFc21ES1}

(M.R. Bush, Jul. \(2015\))\\
In the range \(0<d<10^8\) of fundamental discriminants \(d\)
of real quadratic fields \(K=\mathbb{Q}(\sqrt{d})\)
there exist precisely \(\mathbf{12}\) cases with \(1^{\mathrm{st}}\) IPAD
\(\tau^{(1)}(K)=\lbrack 1^2;21,\mathbf{3^2},(21)^2\rbrack\).

\end{proposition}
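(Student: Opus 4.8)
The plan is to settle this by an exhaustive computation over fundamental discriminants, organised so that the expensive invariant is evaluated only on a short list of survivors. First I would enumerate all fundamental discriminants $d$ with $0<d<10^8$ and discard at once every field $K=\mathbb{Q}(\sqrt{d})$ whose $3$-class group is not of type $(3,3)$; deciding $\mathrm{Cl}_3(K)\simeq(3,3)$ is cheap, since the $3$-rank of a quadratic field is governed by genus theory and the order $3^v$ can be pinned down with an analytic class number estimate. For each of the comparatively few remaining fields I would construct the four unramified cyclic cubic extensions $L_1,\dots,L_4$ — i.e. the four degree-$3$ subfields of $\mathrm{F}_3^1(K)$, corresponding to the four subgroups of index $3$ in $\mathrm{Cl}_3(K)$ — and compute the abelian type invariants of each $\mathrm{Cl}_3(L_i)$. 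Reading off $\tau^{(1)}(K)=\lbrack 1^2;\tau_1(L_1),\dots,\tau_1(L_4)\rbrack$ and retaining exactly those $d$ for which this unordered datum equals $\lbrack 1^2;21,\mathbf{3^2},(21)^2\rbrack$ (one component $(27,27)$, three components $(9,3)$) yields the desired list, which turns out to have cardinality $12$.

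Concretely this is the principalization/IPAD algorithm described in \cite[\S\ 5, pp.446--450]{Ma3}, implemented in PARI/GP \cite{PARI}; the computation has been carried out independently by M.R. Bush, and the finitely many output fields can be re-checked with MAGMA \cite{MAGMA}, exactly as in the analogous Propositions~\ref{prp:RQFc18ES1} and \ref{prp:RQFc18ES2}. Since the statement merely records the count and the list of discriminants, no further argument is needed once the search has been run to completion.

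The main obstacle is purely computational cost: determining $\mathrm{Cl}_3(L_i)$ requires class group computations in sextic number fields $L_i$, which are the bottleneck and which one is tempted to perform under GRH for speed. To keep the proposition unconditional I would re-verify the class groups of the finitely many candidate fields with a provably correct (certified) class group routine, which is the purpose of the MAGMA double-check mentioned above. A secondary subtlety, already flagged in Remark~\ref{rmk:c21GS}, is that the four extensions $L_i$ carry no canonical ordering on the number-theoretic side, so the IPAD must be compared as an unordered multiset; this is why the target is written $\lbrack 1^2;21,\mathbf{3^2},(21)^2\rbrack$ with the polarized component $3^2$ singled out in boldface rather than by position.
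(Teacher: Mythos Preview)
Your proposal is correct and matches the paper's own proof essentially verbatim: the paper simply records that the results were communicated by M.~R.~Bush, who used PARI/GP \cite{PARI} with the techniques of \cite[\S\,5, pp.~446--450]{Ma3} and double-checked with MAGMA \cite{MAGMA}. Your added remarks on GRH-conditional class group computations and the unordered comparison of IPAD components are sensible elaborations but go beyond what the paper itself supplies.
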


\begin{proof}
The results were communicated to us on July 11, 2015, by M.R. Bush, who used PARI/GP
\cite{PARI}
with similar techniques as described in our paper
\cite[\S\ 5, pp.446--450]{Ma3},
and additionally double-checked with MAGMA
\cite{MAGMA}.
\end{proof}

\begin{corollary} 
\label{cor:RQFc21ES1}

(D.C. Mayer, \(2010\))\\
A quadratic field \(K=\mathbb{Q}(\sqrt{d})\)
with
\(\tau^{(1)}(K)=\lbrack 1^2;21,\mathbf{3^2},(21)^2\rbrack\)
must be a real quadratic field
with \(3\)-capitulation type \(\varkappa_1(K)=(2034)\).

\end{corollary}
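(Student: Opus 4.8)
The plan is to argue exactly as for the parallel statement in capitulation type c.18, namely Corollary \ref{cor:RQFc18ES1}, reducing everything to the group-theoretic classification of the second $3$-class group together with the rigidity of capitulation in the complex quadratic case. Let $K$ be any quadratic field with $\mathrm{Cl}_3(K)\cong(3,3)$ and $\tau^{(1)}(K)=\lbrack 1^2;21,\mathbf{3^2},(21)^2\rbrack$, and write $\mathfrak{G}=\mathrm{G}_3^2(K)$ for its second $3$-class group, a finite metabelian $3$-group with $\mathfrak{G}/\mathfrak{G}'\cong(3,3)$ and $\tau_1(\mathfrak{G})=\tau_1(K)$. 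By item (1) of Proposition \ref{prp:c21ES1} (resp. the classification of metabelian $3$-groups of coclass $2$ in \cite{Ne} on which it rests), this IPAD forces $\mathfrak{G}\simeq\langle 2187,303\rangle-\#1;1$, a mainline vertex of the coclass tree $\mathcal{T}^2(\langle 243,8\rangle)$ in Figure \ref{fig:TreeOverviewU} with transfer kernel type c.21. Since the $3$-capitulation type of $K$ coincides with the first-layer TKT of $\mathfrak{G}$, we conclude $\varkappa_1(K)=\varkappa_1(\mathfrak{G})=(2034)$. This already gives the assertion about the capitulation type; it remains only to exclude the case $d<0$.

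To rule out the complex quadratic case, note that the type c.21 $=(2034)$ has capitulation number $\nu=\#\lbrace i\mid\varkappa_1(i)=0\rbrace=1$, i.e. it contains exactly one total transfer kernel. On the other hand, for a complex quadratic field $k=\mathbb{Q}(\sqrt{d})$ with $d<0$ and $3$-class group of type $(3,3)$, the capitulation kernel $\ker(j_i)$ in each of the four unramified cyclic cubic extensions $L_i$ has order exactly $3$; in particular, no total capitulation can occur, so $\nu=0$ \cite{CgFt,Ma1} (this is the constraint listed as item (5) in \S\ \ref{s:Constraints}). Hence a quadratic field with the prescribed IPAD, which is thereby forced into TKT c.21 with $\nu=1$, cannot be complex, and so must be real, $d>0$. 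This is precisely item (1) of \cite[Cor.4.4.3, p.442]{Ma3}.

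There is essentially no computational difficulty here; the substance is entirely imported. The one point that requires care is the first step: one must be certain that the polarized component $\mathbf{3^2}$ of the IPAD, together with the stable components $(21)$ encoding the capitulation behaviour, genuinely singles out the mainline vertex $\langle 2187,303\rangle-\#1;1$ and rules out any competing metabelian group on a different coclass-$2$ tree or higher up the same tree. This is exactly what Proposition \ref{prp:c21ES1} establishes, by the same polarization-and-stabilization argument used in the proof of Proposition \ref{prp:c21GS}. Once that classification is granted, the elimination of the complex case is immediate from the classical rigidity of complex quadratic capitulation, and the corollary follows.
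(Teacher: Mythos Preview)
Your argument has a circularity in the first step. You invoke Proposition~\ref{prp:c21ES1} to conclude that the IPAD alone pins down the second \(3\)-class group (and hence the TKT c.21), but that proposition takes \emph{both} the IPAD \emph{and} the IPOD \(\varkappa_1=(2034)\) as standing hypotheses; its proof explicitly uses the polarization \(0\) of \(\varkappa_1\) in the final identification of the mainline vertex. The IPAD \([1^2;21,3^2,(21)^2]\) by itself does not single out the mainline: every defect-\(0\) vertex at order \(3^8\) on the coclass tree \(\mathcal{T}^2(\langle 243,8\rangle)\) shares it, and these include groups with TKTs E.8 \((2234)\), E.9 \((2334)\), and G.16 \((2134)\) in addition to c.21 (see Figure~\ref{fig:TreeOverviewU}). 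All of E.8, E.9, G.16 have \(\nu=0\), so your \(\nu\)-argument does nothing to exclude them; on the contrary, they are perfectly compatible with complex quadratic base fields.

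What actually eliminates E.8, E.9, G.16 at this level for a \emph{quadratic} base field are the selection rules of \S\ref{s:Constraints}, item (4): the \(3\)-class number relations for dihedral extensions force parity constraints on \(\mathrm{cl}(\mathfrak{G})\) that, at even class, leave only the mainline vertex c.21 as a possible second \(3\)-class group of a quadratic field. This is the content of \cite[Cor.4.4.3, p.442]{Ma3}, which the paper simply cites. Your second step, deducing \(d>0\) from \(\nu=1\) once c.21 is established, is correct and is indeed part of what that corollary packages; but the first step needs the selection-rule input, not Proposition~\ref{prp:c21ES1}.
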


\begin{proof}
This is a consequence of item (1) in
\cite[Cor.4.4.3, p.442]{Ma3}.
\end{proof}

\bigskip
\noindent
\begin{theorem} 
\label{thm:RQFc21ES1}

(D.C. Mayer, Aug. \(2015\))

\begin{enumerate}

\item
The \(\mathbf{8}\) real quadratic fields \(K=\mathbb{Q}(\sqrt{d})\) with the following discriminants \(d\) (\(\mathbf{67}\%\) of \(12\)),
\[
\begin{aligned}
 1\,001\,957 &,&  9\,923\,685 &,& 20\,633\,209 &,& 58\,650\,717, \\  
63\,404\,792 &,& 72\,410\,413 &,& 84\,736\,636 &,& 92\,578\,472, 
\end{aligned}
\]

\noindent
have \(3\)-class tower group
\(G\simeq\langle 3^7,303\rangle-\mathbf{\#1;1-\#1;7}\).

\item
The \(\mathbf{4}\) real quadratic fields \(K=\mathbb{Q}(\sqrt{d})\) with the following discriminants \(d\) (\(\mathbf{33}\%\) of \(12\)),

\begin{center}
\(25\,283\,701,\ 36\,100\,840,\ 42\,531\,528,\ 81\,398\,865,\)
\end{center}

\noindent
have \(3\)-class tower group either\\
\(G\simeq\langle 3^6,54\rangle-\mathbf{\#2;3-\#1;1-\#1;7}\) or\\
\(G\simeq\langle 3^6,54\rangle-\mathbf{\#2;3-\#1;1-\#1;8}\).

\end{enumerate}

\noindent
In each case, the length of the \(3\)-class tower of \(K\) is given by \(\ell_3(K)=3\).

\end{theorem}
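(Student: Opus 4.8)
The plan is to reduce Theorem \ref{thm:RQFc21ES1} to the group-theoretic classification already obtained in Corollary \ref{cor:c21ES1}, leaving only a finite amount of explicit class field computation for the twelve listed discriminants.

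First I would check that all hypotheses of Corollary \ref{cor:c21ES1} are satisfied by each of the twelve fields $K=\mathbb{Q}(\sqrt{d})$. Proposition \ref{prp:RQFc21ES1} records that each such $K$ has first-order IPAD $\tau^{(1)}(K)=\lbrack 1^2;21,\mathbf{3^2},(21)^2\rbrack$, and Corollary \ref{cor:RQFc21ES1} then forces the $3$-capitulation type to be $\varkappa_1(K)=(2034)$; thus the assumptions preceding Proposition \ref{prp:c21ES1} hold. Moreover $K$ is real quadratic, hence has torsionfree unit rank $r=1$ and does not contain a primitive third root of unity, which are exactly the additional hypotheses of Corollary \ref{cor:c21ES1}.

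Next, for each of the twelve discriminants I would compute the iterated IPAD of second order $\tau^{(2)}(K)$: for each of the four unramified cyclic cubic extensions $L_i\mid K$ one needs the family $\tau_1(L_i)$ of abelian type invariants of the $3$-class groups of the four unramified cyclic cubic extensions of $L_i$ --- in particular confirming the fixed component $\tau_1(L_2)=((321)^4)$ and reading off $\tau_1(L_1),\tau_1(L_3),\tau_1(L_4)$. The expected outcome is the dichotomy that the eight fields in part (1) yield $\tau_1(L_i)=(321,\mathbf{(21)^3})$ for $i=1,3,4$, whereas the four fields in part (2) yield $\tau_1(L_i)=(321,\mathbf{(31)^3})$ for $i=1,3,4$. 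This is the step I expect to be the main obstacle, since it requires the structure of $3$-class groups of number fields of degree $9$ over $\mathbb{Q}$ with very large discriminants; already for $d$ near $10^8$ this pushes PARI/GP close to its practical limit, so, as in Proposition \ref{prp:RQFc21ES1}, I would cross-check every computation with MAGMA.

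Finally, feeding these second-order IPADs into Corollary \ref{cor:c21ES1} immediately identifies the $3$-class tower group: item (1) gives $G\simeq\langle 3^7,303\rangle-\#1;1-\#1;7$ for the eight fields of part (1), and item (2) gives $G\simeq\langle 3^6,54\rangle-\#2;3-\#1;1-\#1;7$ or $G\simeq\langle 3^6,54\rangle-\#2;3-\#1;1-\#1;8$ for the four fields of part (2). Since each of these is a finite $3$-group of derived length $\mathrm{dl}(G)=3$ (as recorded in Corollary \ref{cor:c21ES1}), while Proposition \ref{prp:c21ES1} already yields $\ell_3(K)\ge 3$, the identity $\ell_3(K)=\mathrm{dl}(\mathrm{G}_3^\infty(K))$ forces $\ell_3(K)=3$ in every case, which completes the argument.
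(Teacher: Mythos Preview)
Your proposal is correct and follows essentially the same approach as the paper's own proof, which simply records that the twelve fields have \(3\)-capitulation type \((2034)\), first IPAD \(\lbrack 1^2;21,\mathbf{3^2},(21)^2\rbrack\), and ``suitable'' second IPAD, and then invokes Corollary~\ref{cor:c21ES1}. Your version spells out the computational dichotomy in the second-order IPAD more explicitly than the paper does; one small slip is that the unramified cyclic cubic extensions of the \(L_i\) have degree \(18\), not \(9\), over \(\mathbb{Q}\).
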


\begin{proof}
Since all these real quadratic fields \(K=\mathbb{Q}(\sqrt{d})\) have
\(3\)-capitulation type \(\varkappa_1(K)=(2034)\), \(1^{\mathrm{st}}\) IPAD  
\(\tau^{(1)}(K)=\lbrack 1^2;21,\mathbf{3^2},(21)^2\rbrack\)
and suitable \(2^{\mathrm{nd}}\) IPAD,
the claim is a consequence of Corollary
\ref{cor:c21ES1}.
\end{proof}

\begin{remark}
\label{rmk:RQFc21ES1}
Of course, the percentages given in Theorem
\ref{thm:RQFc21ES1}
are unable to predict reliable tendencies for extensive statistical ensembles.
The results of M.R. Bush show that there are \(146\) real quadratic fields
with discriminants \(0<d<10^9\) having the IPAD in Proposition
\ref{prp:RQFc21ES1}.
For the remaining \(134\) cases outside of the range \(0<d<10^8\),
we cannot specify the \(3\)-tower group,
since the computation would require too much CPU time.
However, according to Proposition
\ref{prp:c21ES1}
and Corollary
\ref{cor:c21ES1},
we know that the length of the \(3\)-tower is exactly \(\ell_3(K)=3\). 
\end{remark}



\subsection{The 2\({}^{\text{nd}}\) Excited State of Capitulation Type c.21}
\label{ss:c21ES2}

As before, \(K\) is an algebraic number field
with \(3\)-class group \(\mathrm{Cl}_3(K)\) of type \((3,3)\hat{=}1^2\).
Let \(L_1,\ldots,L_4\) be the four unramified cyclic cubic extensions of \(K\),
and suppose that the \(1^{\mathrm{st}}\) order Artin pattern \(\mathrm{AP}^{(1)}(K)\) of \(K\)
is given by the IPAD 
\(\tau^{(1)}(K)=\lbrack 1^2;21,\mathbf{4^2},21,21\rbrack\)
and the IPOD
\(\varkappa^{(1)}(K)=\lbrack G^\prime;H_2,\mathbf{G},H_3,H_4\rbrack\),
i.e. \(\tau_1(K)=((9,3),(81,81),(9,3),(9,3))\) are the type invariants of the \(3\)-class groups of the \(L_i\)
and \(\varkappa_1(K)=(2034)\) is the \(3\)-capitulation type of \(K\) in the \(L_i\).

\begin{proposition}
\label{prp:c21ES2}

(D.C. Mayer, \(2010\))

\begin{enumerate}

\item
The second \(3\)-class group \(\mathfrak{G}=\mathrm{G}_3^2(K)\) of \(K\)
is isomorphic to the metabelian \(3\)-group \(\langle 2187,303\rangle(-\#1;1)^3\)
and thus its relation rank satisfies the inequality
\(d_2(\mathfrak{G})\ge d_1(\mathfrak{G})+2\).

\item
Consequently,
if \(K\) is a number field with torsionfree unit rank \(r=1\)
and does not contain the third roots of unity
(in particular, if \(K=\mathbb{Q}(\sqrt{d})\) is a real quadratic field),
then the length of the \(3\)-class field tower of \(K\) must be \(\ell_3(K)\ge 3\).

\end{enumerate}

\end{proposition}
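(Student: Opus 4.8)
The plan is to carry over, essentially verbatim, the strategy used in the proofs of Propositions \ref{prp:c21GS} and \ref{prp:c21ES1}; the only change is that the polarized component of the IPAD is now \(4^2\) (of order \(3^6\)) rather than \(2^2\) or \(3^2\), which pushes the nilpotency class and order of \(\mathfrak{G}=\mathrm{G}_3^2(K)\) further up the same coclass tree but leaves every qualitative step intact. Throughout one uses \(\tau_1(\mathfrak{G})=\tau_1(K)=(21,4^2,(21)^2)\) and \(\varkappa_1(\mathfrak{G})=\varkappa_1(K)=(2034)\).

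For part (1) I would first pin down \(\mathrm{cc}(\mathfrak{G})\). By items 1) and 3) of \cite[Thm.3.2, p.291]{Ma6}, \(\tau_1(\mathfrak{G})\) would have to contain three components of type \(1^2\) if \(\mathrm{cc}(\mathfrak{G})=1\) and two of type \(1^3\) if \(\mathrm{cc}(\mathfrak{G})\ge 3\); since \(\tau_1(\mathfrak{G})\) has neither, \(\mathrm{cc}(\mathfrak{G})=2\) (alternatively, \(\varkappa_1(\mathfrak{G})=(2034)\) has only one total kernel, excluding coclass \(1\) by \cite[Thm.2.5, p.479]{Ma2}, and no transposition, excluding coclass \(\ge 3\) by \cite[Cor.3.0.2, p.772]{BuMa}). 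Next, item 2) of \cite[Thm.3.1, p.290]{Ma6} shows the polarized component \(4^2\) of order \(3^6\) cannot occur for a sporadic \(3\)-group, so \(\mathfrak{G}\) lies on one of the three coclass-\(2\) trees with metabelian mainline rooted at \(\langle 3^5,3\rangle\), \(\langle 3^5,6\rangle\), \(\langle 3^5,8\rangle\); the three stable TTT components \(((21)^3)\) force the root \(\langle 3^5,8\rangle\), again by item 2) of \cite[Thm.3.2, p.291]{Ma6}. The polarization value \(4^2\) then fixes \(\mathrm{cl}(\mathfrak{G})=8\), hence \(\lvert\mathfrak{G}\rvert=3^{10}\) (defect \(k=1\) is possible only for type G.16), and the stable components \((234)\) of \(\varkappa_1(\mathfrak{G})=(2034)\) single out the unique mainline vertex of that order, which Figure \ref{fig:C21PrunedTreeU} identifies as \(\langle 2187,303\rangle(-\#1;1)^3\). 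Since the \(p\)-multiplicator rank is a lower bound for the relation rank, \(d_2(\mathfrak{G})\ge\mathrm{MR}(\mathfrak{G})=4=d_1(\mathfrak{G})+2\).

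Part (2) is then immediate from the corrected Shafarevich bound (Theorem \ref{thm:Shafarevich}): a number field \(K\) with torsionfree unit rank \(r=r_1+r_2-1=1\) and \(\zeta_3\notin K\) has \(d_2(G)\le d_1(G)+r=3\) for its \(3\)-tower group \(G=\mathrm{G}_3^\infty(K)\); comparing with \(d_2(\mathfrak{G})\ge 4\) gives \(G\not\simeq\mathfrak{G}=G/G^{\prime\prime}\), so \(G\) is non-metabelian and \(\ell_3(K)=\mathrm{dl}(G)\ge 3\). The only non-routine ingredient is the identification of the order-\(3^{10}\) mainline vertex of \(\mathcal{T}^2(\langle 243,8\rangle)\) together with its Artin pattern; but this is a finite verification, already carried out in the \(p\)-group generation computations underlying Theorems \ref{thm:c21GS} and \ref{thm:c21ES1} and recorded in Figure \ref{fig:C21PrunedTreeU}, so I expect no genuine obstacle here.
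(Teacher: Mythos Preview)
Your proposal is correct and follows essentially the same approach as the paper's own proof, which likewise refers back to Proposition~\ref{prp:c21GS} and only adjusts the polarized component to \(4^2\), yielding \(\mathrm{cl}(\mathfrak{G})=8\), \(\lvert\mathfrak{G}\rvert=3^{10}\), and the mainline vertex \(\langle 2187,303\rangle(-\#1;1)^3\). One minor slip: the polarized component \(4^2\), i.e.\ type \((81,81)\), has order \(3^8\), not \(3^6\); this does not affect the argument.
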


\begin{proof}

Again, a great deal of the proof is similar to the proof of Proposition
\ref{prp:c21GS}.

\begin{enumerate}

\item
First, we prove that \(\mathfrak{G}\) must have coclass \(\mathrm{cc}(\mathfrak{G})=2\).
Next, we show that \(\mathfrak{G}\) must be a descendant of \(\langle 3^5,8\rangle\),
this time using the polarized component \(4^2\) of order \(3^8\) of \(\tau_1(\mathfrak{G})\).
Again, we have
\(d_2(\mathfrak{G})\ge\mathrm{MR}(\mathfrak{G})=4=d_1(\mathfrak{G})+2\),
since the \(p\)-multiplicator rank is a lower bound for the relation rank.
Finally, the polarization \(4^2\) of \(\tau_1(\mathfrak{G})=(21,4^2,(21)^2)\)
determines the nilpotency class \(\mathrm{cl}(\mathfrak{G})=8\)
and thus also the order \(\lvert\mathfrak{G}\rvert=3^{10}\).
The polarization \(0\) of \(\varkappa_1(\mathfrak{G})=(2034)\) with stable components \((234)\)
unambiguously identifies the mainline vertex of order \(3^{10}\),
which is \(\langle 2187,303\rangle(-\#1;1)^3\), according to Figure
\ref{fig:C21PrunedTreeU}.

\item
As before,
\cite[Thm.6, p.140]{Sh}
implies that the relation rank \(d_2(G)\) of the \(3\)-class tower group
\(G:=\mathrm{Gal}(\mathrm{F}_3^\infty(K)\vert K)\)
of a number field \(K\) with unit rank \(r=r_1+r_2-1=1\) and \(\zeta_3\notin K\)
satisfies \(d_2(G)\le d_1(G)+r=2+1=3\),
whence \(G\not\simeq\mathfrak{G}\) and \(\ell_3(K)=\mathrm{dl}(G)\ge 3\).
\end{enumerate}
\end{proof}

\noindent
Suppose now that, under the assumptions preceding Proposition
\ref{prp:c21ES2},
\(G=\mathrm{G}_3^\infty(K)\) denotes the \(3\)-class tower group of \(K\) and
we are additionally given the \(2^{\mathrm{nd}}\) order IPAD of \(K\),
\[\tau^{(2)}(K)=
\lbrack 1^2;((21;\tau_1(L_1)),\mathbf{4^2};\tau_1(L_2)),(21;\tau_1(L_3)),(21;\tau_1(L_4))\rbrack,\]
with fixed \(\tau_1(L_2)=((431)^4)\).

\begin{theorem}
\label{thm:c21ES2}

(D.C. Mayer, Sep. \(2015\))

\begin{enumerate}

\item
\(\tau_1(L_i)=(431,\mathbf{(21)^3)}\) for \(i=1,3,4\)
\(\Longleftrightarrow\)\\
\(G\simeq\langle 3^7,303\rangle\mathbf{(-\#1;1)^3}\), of order \(\lvert G\rvert=3^{10}\),
or\\
\(G\simeq\langle 3^7,303\rangle\mathbf{(-\#1;1)^3-\#1;7}\), of order \(\lvert G\rvert=3^{11}\),

\item
\(\tau_1(L_i)=(431,\mathbf{(31)^3})\) for \(i=1,3,4\)
\(\Longleftrightarrow\)\\
\(G\simeq\langle 3^6,54\rangle\mathbf{-\#2;3(-\#1;1)^3}\), of order \(\lvert G\rvert=3^{11}\),
or\\
\(G\simeq\langle 3^6,54\rangle\mathbf{-\#2;3(-\#1;1)^3-\#1;7}\), of order \(\lvert G\rvert=3^{12}\),
or\\
\(G\simeq\langle 3^6,54\rangle\mathbf{-\#2;3-\#1;1-\#2;1-\#1;1}\), of order \(\lvert G\rvert=3^{12}\),
or\\
\(G\simeq\langle 3^6,54\rangle\mathbf{-\#2;3-\#1;1-\#2;1-\#1;1-\#1;7}\), of order \(\lvert G\rvert=3^{13}\),
or\\
\(G\simeq\langle 3^6,54\rangle\mathbf{-\#2;3-\#1;1-\#2;1-\#1;1-\#1;8}\), of order \(\lvert G\rvert=3^{13}\).

\end{enumerate}

\end{theorem}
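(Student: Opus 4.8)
The plan is to follow the template of the proof of Theorem~\ref{thm:c18ES2} essentially verbatim, replacing the coclass tree $\mathcal{T}^2(\langle 243,6\rangle)$ by its sibling $\mathcal{T}^2(\langle 243,8\rangle)$ throughout. The starting point is Proposition~\ref{prp:c21ES2}, which already pins down the common metabelianization: every number field $K$ with the prescribed first-order Artin pattern has $\mathfrak{G}=\mathrm{G}_3^2(K)\simeq\langle 3^7,303\rangle(-\#1;1)^3$, a metabelian mainline vertex of order $3^{10}$. Consequently any candidate $G$ for the $3$-tower group lies in $\mathrm{cov}(\mathfrak{G})$, and the theorem reduces to two tasks: enumerate $\mathrm{cov}(\langle 3^7,303\rangle(-\#1;1)^3)$, and separate its members by the iterated second-order IPAD $\tau^{(2)}(K)$ with $\tau_1(L_2)=((431)^4)$ fixed.

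For the enumeration I would run the Newman--O'Brien $p$-group generation algorithm in MAGMA on the root $R=\langle 243,8\rangle$, constructing the pruned tree $\mathcal{T}_\ast(\langle 243,8\rangle)$ of Figure~\ref{fig:C21PrunedTreeU} with all periodic bifurcations from $\mathcal{G}(3,2)$ to $\mathcal{G}(3,3)$ retained and all transfer kernel types different from c.21 discarded. In parallel with the recursion one computes, for every vertex $V$, the iterated IPAD $\tau^{(2)}(V)$ and, for non-metabelian $V=H$, the second derived quotient $H/H^{\prime\prime}$, keeping exactly those $H$ with $H/H^{\prime\prime}\simeq\langle 3^7,303\rangle(-\#1;1)^3$. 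This should produce the seven-element cover
\[
\mathrm{cov}(\langle 3^7,303\rangle(-\#1;1)^3)=\lbrace\mathfrak{G},\ \mathfrak{G}-\#1;7,\ \ldots\rbrace,
\]
consisting of the metabelian mainline vertex $\mathfrak{G}$, one non-metabelian immediate descendant of it, and five further non-metabelian descendants below the coclass-$3$ bifurcations of $\langle 3^6,54\rangle$, as spelled out in the statement. To make the recursion finite one argues, exactly as in the proofs of Theorems~\ref{thm:c21GS} and~\ref{thm:c18ES2}, that the second-order IPAD \emph{stabilizes}: along the surviving branches the components $\tau_1(L_i)$ for $i=1,3,4$ freeze to $(\ast,(21)^3)$ on the coclass-$2$ mainline and to $(\ast,(31)^3)$ on the coclass-$3$ mainline, with the remaining positions exhibiting a deterministic growth, so that no vertex beyond order $3^{13}$ can carry the prescribed $\tau^{(2)}(K)$ and the cover is complete.

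Finally, reading off $\tau^{(2)}(V)$ for the seven vertices shows that they partition into exactly the two batches of the claim according to whether $\tau_1(L_i)=(431,(21)^3)$ or $(431,(31)^3)$ for $i=1,3,4$; within each batch the second-order IPAD is constant, so the vertices are \emph{not} distinguished by it, and the conclusion can only be stated disjunctively, as an isomorphism of $G$ to one of the listed groups, whose orders $3^{10},\dots,3^{13}$ are read from the levels in Figure~\ref{fig:C21PrunedTreeU}. The main obstacle is the computational one: verifying the stabilization claims rigorously and pushing the $p$-group generation algorithm through three nested bifurcations far enough to be certain the cover has no eighth element; once that is done, Proposition~\ref{prp:c21ES2} and the combinatorics of $\mathcal{T}_\ast(\langle 243,8\rangle)$ supply the rest.
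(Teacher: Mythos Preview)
Your proposal is correct and follows essentially the same approach as the paper: invoke Proposition~\ref{prp:c21ES2} to fix the metabelianization, run the $p$-group generation algorithm on the pruned tree $\mathcal{T}_\ast(\langle 243,8\rangle)$ while tracking $\tau^{(2)}(V)$ and $H/H^{\prime\prime}$, exhibit the resulting seven-element cover, and observe that the second-order IPAD partitions it into the two claimed batches without distinguishing vertices within a batch. The only difference is cosmetic: the paper cites Theorem~\ref{thm:c21GS} rather than Theorem~\ref{thm:c18ES2} as the template, but the underlying computation and logic are identical.
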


\begin{proof}
Similar as in the proof of Theorem
\ref{thm:c21GS},
we construct the descendant tree \(\mathcal{T}(R)\) of the root \(R=\langle 243,8\rangle\),
determine the iterated IPAD of second order \(\tau^{(2)}(V)\) for each vertex \(V\),
and check the second derived quotient \(H/H^{\prime\prime}\) of non-metabelian vertices \(H\).

It turns out that the cover of the common metabelianization
\(\mathfrak{G}=\langle 3^7,303\rangle(-\#1;1)^3=G/G^{\prime\prime}\)
of all candidates \(G\) for the \(3\)-tower group is given by
\(\mathrm{cov}(\langle 3^7,303\rangle(-\#1;1)^3)=\)\\
\(\lbrace\langle 3^7,303\rangle(-\#1;1)^3,\ \langle 3^7,303\rangle(-\#1;1)^3-\#1;7,\)\\
\(\langle 3^6,54\rangle-\#2;1-\#1;1-\#1;2-\#1;1,\ \langle 3^6,54\rangle-\#2;1-\#1;1-\#1;2-\#1;1-\#1;7,\)\\
\(\langle 3^6,54\rangle(-\#2;1-\#1;1)^2,\ \langle 3^6,54\rangle(-\#2;1-\#1;1)^2-\#1;1,\ \langle 3^6,54\rangle(-\#2;1-\#1;1)^2-\#1;8\rbrace\).

The various vertices 
are not characterized uniquely by their iterated IPAD of second order.
Rather they can be identified as batches of two resp. five vertices
in the claimed manner.
\end{proof}

\begin{corollary} 
\label{cor:c21ES2}

(D.C. Mayer, Sep. \(2015\))\\
If \(K\) has torsionfree unit rank \(1\) and does not contain a primitive third root of unity, then

\begin{enumerate}

\item
\(\tau_1(L_i)=(431,\mathbf{(21)^3)}\) for \(i=1,3,4\)
\(\Longleftrightarrow\)\\
\(G\simeq\langle 3^7,303\rangle\mathbf{(-\#1;1)^3-\#1;7}\).

\item
\(\tau_1(L_i)=(431,\mathbf{(31)^3})\) for \(i=1,3,4\)
\(\Longleftrightarrow\)\\
\(G\simeq\langle 3^6,54\rangle\mathbf{-\#2;3(-\#1;1)^3-\#1;7}\)
or\\
\(G\simeq\langle 3^6,54\rangle\mathbf{-\#2;3-\#1;1-\#2;1-\#1;1-\#1;7}\)
or\\
\(G\simeq\langle 3^6,54\rangle\mathbf{-\#2;3-\#1;1-\#2;1-\#1;1-\#1;8}\).

\end{enumerate}

\noindent
All these groups have derived length \(\mathrm{dl}(G)=3\).

\end{corollary}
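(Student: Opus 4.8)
The plan is to reproduce the argument of Corollary \ref{cor:c18ES2} almost verbatim, with Theorem \ref{thm:c21ES2} taking over the role played there by Theorem \ref{thm:c18ES2}. First I would invoke Theorem \ref{thm:c21ES2} to list the seven members of the cover \(\mathrm{cov}(\langle 3^7,303\rangle(-\#1;1)^3)\) and to record the partition of the candidates for the \(3\)-class tower group \(G\) into the two batches appearing in items (1) and (2) of that theorem, indexed by the second order IPAD \(\tau^{(2)}(K)\).

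Next I would single out the infinitely capable members of this cover, i.e.\ those with nuclear rank \(\ge 1\), which are \(\langle 3^7,303\rangle(-\#1;1)^3\), \(\langle 3^6,54\rangle-\#2;3(-\#1;1)^3\), and \(\langle 3^6,54\rangle-\#2;3-\#1;1-\#2;1-\#1;1\); for each of these the \(p\)-group generation data gives \(p\)-multiplicator rank \(\mathrm{MR}(G)=4\), hence \(d_2(G)\ge\mathrm{MR}(G)=4\) by \cite[\S\ 14, p.178]{Ma5}. The remaining four groups are the terminal vertices listed on the right hand sides of (1) and (2).

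Then, applying the corrected Shafarevich bound (Theorem \ref{thm:Shafarevich} with \(S=\emptyset\)) to a field \(K\) with torsionfree unit rank \(r=r_1+r_2-1=1\) and \(\zeta_3\notin K\), I obtain \(d_2(G)\le d_1(G)+r=2+1=3\), which is incompatible with \(d_2(G)\ge 4\). This eliminates the three infinitely capable candidates, so \(G\) must be one of the four terminal groups; the converse implications and the value \(\mathrm{dl}(G)=3\) for each of them are already contained in Theorem \ref{thm:c21ES2}. Putting the two batches together yields the claimed equivalences.

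The only slightly delicate step is the bookkeeping in the middle paragraph — correctly deciding which of the seven cover members are capable and confirming that \(\mathrm{MR}=4\) for all of them — but this is a finite MAGMA computation entirely analogous to the one underpinning Corollary \ref{cor:c18ES2}, so I anticipate no real obstacle there.
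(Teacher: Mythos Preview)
Your proposal is correct and follows essentially the same approach as the paper: identify the three infinitely capable members of the cover, observe that each has \(p\)-multiplicator rank \(4\) and hence \(d_2(G)\ge 4\), and eliminate them via the Shafarevich bound \(d_2(G)\le d_1(G)+r=3\). Your list of the three capable groups \(\langle 3^7,303\rangle(-\#1;1)^3\), \(\langle 3^6,54\rangle-\#2;3(-\#1;1)^3\), \(\langle 3^6,54\rangle-\#2;3-\#1;1-\#2;1-\#1;1\) is exactly right (indeed the paper's own proof contains a minor typo, writing \(-\#2;1\) for \(-\#2;3\) in the second of these).
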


\begin{proof}
The three groups
\(\langle 3^6,54\rangle\mathbf{-\#2;3-\#1;1-\#2;1-\#1;1}\),
\(\langle 3^6,54\rangle-\mathbf{\#2;1(-\#1;1)^3}\)
and
\(\langle 3^7,303\rangle\mathbf{(-\#1;1)^3}\),
which are infinitely capable,
have \(p\)-multiplicator rank \(\mathrm{MR}(G)=4\) and
thus relation rank \(d_2(G)\ge 4\),
and consequently cannot satisfy the Shafarevich inequality \(d_2(G)\le d_1(G)+r=2+1=3\)
\cite[Thm.6, p.140]{Sh}
for a field \(K\) with unit rank \(r=r_1+r_2-1=1\) and \(\zeta_3\notin K\).
\end{proof}



\subsection{Real Quadratic Fields of Type c.21\(\uparrow^2\)}
\label{ss:RQFc21ES2}

\begin{proposition} 
\label{prp:RQFc21ES2}

(M.R. Bush, Jul. \(2015\))\\
In the range \(0<d<10^9\) of fundamental discriminants \(d\)
of real quadratic fields \(K=\mathbb{Q}(\sqrt{d})\)
there exist precisely \(\mathbf{5}\) cases with \(1^{\mathrm{st}}\) IPAD
\(\tau^{(1)}(K)=\lbrack 1^2;21,\mathbf{4^2},(21)^2\rbrack\).

\end{proposition}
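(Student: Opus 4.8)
The assertion is of a purely computational nature and was obtained by M.R. Bush in July 2015; we sketch the underlying algorithm, which parallels the one employed for Propositions \ref{prp:RQFc18ES1}, \ref{prp:RQFc18ES2} and \ref{prp:RQFc21ES1}. First one enumerates all fundamental discriminants \(0<d<10^9\) of real quadratic fields \(K=\mathbb{Q}(\sqrt{d})\) and, for each, computes the \(3\)-class group \(\mathrm{Cl}_3(K)\), retaining only those with \(\mathrm{Cl}_3(K)\) of type \((3,3)\). For each surviving field, the four unramified cyclic cubic relative extensions \(L_1,\ldots,L_4\) of \(K\) are constructed via class field theory (equivalently, as the cubic subfields of the genus field or through the appropriate ray class group quotients), and the structure of each \(\mathrm{Cl}_3(L_i)\) is determined, yielding the unordered first-order IPAD \(\tau^{(1)}(K)=\lbrack\tau_0(K);\tau_1(K)\rbrack\). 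Finally one filters for the prescribed pattern \(\tau^{(1)}(K)=\lbrack 1^2;21,\mathbf{4^2},(21)^2\rbrack\), i.e. one component \((81,81)\), one component \((3,3,3)\)---wait, in this excited state it is \((81,81)\) together with two \((9,3)\) and one \((9,3)\); more precisely the required multiset is \(\{(9,3),(81,81),(9,3),(9,3)\}\)---and records the count.

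The practical realization uses PARI/GP \cite{PARI} with the principalization routines described in \cite[\S\ 5, pp.446--450]{Ma3}; the resulting list of discriminants is then independently re-verified with MAGMA \cite{MAGMA}, which provides an additional safeguard against implementation errors. Because the polarized component \(4^2\) forces \(\mathrm{Cl}_3(L_2)\) to have the comparatively large order \(3^8\), the number of fields passing even the preliminary IPAD filter in the first layer is extremely small, so that the computation terminates with the five discriminants exhibited in Theorem \ref{thm:RQFc21ES2} (or, in any case, with exactly five fields).

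The main obstacle is the sheer size of the search region combined with the cost of computing \(3\)-class groups of the sextic fields \(L_i\) (of degree \(6\) over \(\mathbb{Q}\)) for every quadratic \(K\) with \(\mathrm{Cl}_3(K)\simeq(3,3)\) in the range \(0<d<10^9\); this is why the range here is \(10^9\) rather than the smaller ranges used when the full \(2^{\mathrm{nd}}\)-order IPAD is needed. One mitigates this by first screening on the cheap invariant \(\mathrm{Cl}_3(K)\) and on congruence conditions, and by noting that once \(\mathrm{Cl}_3(K)\) is of type \((3,3)\) the relevant ray class field data can be assembled uniformly; the rarity of the component \(4^2\) then makes the final enumeration feasible. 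The independent MAGMA cross-check removes the residual uncertainty inherent in any such large-scale numerical experiment.
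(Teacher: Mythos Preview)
Your proposal is essentially correct and follows the same approach as the paper: the paper's own proof simply states that the results were communicated by M.R.~Bush, who used PARI/GP with the techniques of \cite[\S\ 5, pp.446--450]{Ma3} and double-checked with MAGMA. Your version is a more detailed elaboration of that same computational pipeline.

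One point to clean up: the mid-proof self-correction (``one component \((3,3,3)\)---wait, in this excited state\ldots'') is inappropriate in a formal write-up and momentarily introduces a wrong invariant. For type c.21 the stable components are \(((21)^3)\), not \((1^3)\); the required multiset is \(\{(9,3),(81,81),(9,3),(9,3)\}\) as you eventually state. Simply delete the stray reference to \((3,3,3)\) and the conversational aside, and your proof is a faithful (if more expansive) rendering of the paper's argument.
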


\begin{proof}
The results were communicated to us on July 11, 2015, by M.R. Bush, who used PARI/GP
\cite{PARI}
with similar techniques as described in our paper
\cite[\S\ 5, pp.446--450]{Ma3},
and additionally double-checked with MAGMA
\cite{MAGMA}.
\end{proof}

\begin{corollary} 
\label{cor:RQFc21ES2}

(D.C. Mayer, \(2010\))\\
A quadratic field \(K=\mathbb{Q}(\sqrt{d})\)
with
\(\tau^{(1)}(K)=\lbrack 1^2;21,\mathbf{4^2},(21)^2\rbrack\)
must be a real quadratic field
with \(3\)-capitulation type \(\varkappa_1(K)=(2034)\).

\end{corollary}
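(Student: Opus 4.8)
The plan is to derive this from the classification of first-order IPADs of quadratic fields in \cite[Cor.4.4.3, p.442]{Ma3}, exactly as in the proofs of Corollaries \ref{cor:RQFc21ES1} and \ref{cor:RQFc18ES2}: item (1) of that result lists, for each admissible IPAD of a quadratic field with $3$-class group of type $(3,3)$, the compatible capitulation sections together with the admissible sign of the discriminant, and reading off the entry for $\lbrack 1^2;21,\mathbf{4^2},(21)^2\rbrack$ yields at once that $d>0$ and $\varkappa_1(K)=(2034)$.

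For a self-contained argument one would proceed as in the proof of Proposition \ref{prp:c21ES2}. First, items 1) and 3) of \cite[Thm.3.2, p.291]{Ma6} together with \cite[Thm.2.5, p.479]{Ma2} and \cite[Cor.3.0.2, p.772]{BuMa} force the second $3$-class group $\mathfrak{G}=\mathrm{G}_3^2(K)$ to have coclass $\mathrm{cc}(\mathfrak{G})=2$; then the polarized component $\mathbf{4^2}$ of order $3^8$, via item 2) of \cite[Thm.3.1, p.290]{Ma6} and item 2) of \cite[Thm.3.2, p.291]{Ma6}, places $\mathfrak{G}$ on the coclass tree with root $\langle 3^5,8\rangle$ of Figure \ref{fig:TreeOverviewU} with nilpotency class $8$ and order $3^{10}$. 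One must then see that, among the metabelian vertices of that order which can occur as $\mathrm{G}_3^2$ of a quadratic field with this IPAD, only the c.21 mainline vertex $\langle 2187,303\rangle(-\#1;1)^3$ survives; its transfer kernel type $(2034)$ has exactly one total kernel, so the capitulation number is $\nu=1$, which a complex quadratic field cannot realize since it enforces $\nu=0$ \cite{CgFt,Ma1}. Hence $d>0$ and $\varkappa_1(K)=(2034)$.

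The main obstacle is precisely the middle step: ruling out the competing sections E.8, E.9 and G.16 on $\mathcal{T}^2(\langle 243,8\rangle)$ using the \emph{first-order} IPAD alone, which carries no transfer-kernel information. This is the rigidity phenomenon underlying \cite[Cor.4.4.3]{Ma3} and the polarization arguments of Proposition \ref{prp:c21GS}: the single stable $1^3$-component of the TTT and the single vanishing transfer kernel leave no room for an extra $1^2$- or $1^3$-component, a $2$-cycle in the TKT, or a second total kernel, so that the coclass-tree root $\langle 3^5,8\rangle$, the order $3^{10}$, and the section c.21 are all uniquely determined. Once this is established, both the exclusion of $d<0$ and the identification $\varkappa_1(K)=(2034)$ follow as indicated, and quoting item (1) of \cite[Cor.4.4.3, p.442]{Ma3} simply packages this bookkeeping.
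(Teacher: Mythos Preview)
Your proposal is correct and takes exactly the paper's approach: the paper's entire proof is the single sentence ``This is a consequence of item (1) in \cite[Cor.4.4.3, p.442]{Ma3}'', which is precisely your first paragraph. One small slip in your supplementary discussion: for type c.21 the three stable TTT components are $(21)^3$, not a ``single stable $1^3$-component'' as you write---the $1^3$-component belongs to the c.18 tree $\mathcal{T}^2(\langle 243,6\rangle)$, not to $\mathcal{T}^2(\langle 243,8\rangle)$.
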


\begin{proof}
This is a consequence of item (1) in
\cite[Cor.4.4.3, p.442]{Ma3}.
\end{proof}

\bigskip
\noindent
\begin{theorem} 
\label{thm:RQFc21ES2}

(D.C. Mayer, Sep. \(2015\))

\begin{enumerate}

\item
The \(\mathbf{4}\) real quadratic fields \(K=\mathbb{Q}(\sqrt{d})\) with the following discriminants \(d\) (\(\mathbf{80}\%\) of \(5\)),

\begin{center}
\(407\,086\,012,\ 509\,164\,587,\ 510\,908\,876,\ 870\,946\,856\),   
\end{center}

\noindent
have \(3\)-class tower group
\(G\simeq\langle 3^7,303\rangle\mathbf{(-\#1;1)^3-\#1;7}\).

\item
The \textbf{single} real quadratic fields \(K=\mathbb{Q}(\sqrt{d})\) with discriminant \(d=\) (\(\mathbf{20}\%\) of \(5\)),
has \(3\)-class tower group either\\
\(G\simeq\langle 3^6,54\rangle\mathbf{-\#2;3(-\#1;1)^3-\#1;7}\) or\\
\(G\simeq\langle 3^6,54\rangle\mathbf{-\#2;3-\#1;1-\#2;1-\#1;1-\#1;7}\) or\\
\(G\simeq\langle 3^6,54\rangle\mathbf{-\#2;3-\#1;1-\#2;1-\#1;1-\#1;8}\).

\end{enumerate}

\noindent
In each case, the length of the \(3\)-class tower of \(K\) is given by \(\ell_3(K)=3\).

\end{theorem}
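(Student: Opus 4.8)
The plan is to deduce the theorem from Corollary~\ref{cor:c21ES2} by supplying, for each of the five fields, the two items of arithmetic data that corollary requires: the first order Artin pattern and the iterated IPAD of second order.

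First I would verify that every field $K=\mathbb{Q}(\sqrt{d})$ in the statement meets the hypotheses preceding Proposition~\ref{prp:c21ES2}. By the computations underlying Proposition~\ref{prp:RQFc21ES2}, each of the five listed discriminants has first IPAD $\tau^{(1)}(K)=\lbrack 1^2;21,\mathbf{4^2},(21)^2\rbrack$ (and these exhaust the fundamental discriminants $0<d<10^9$ with this IPAD), while Corollary~\ref{cor:RQFc21ES2} shows that every such $K$ is real quadratic with $3$-capitulation type $\varkappa_1(K)=(2034)$. A real quadratic field has torsionfree unit rank $r=1$ and contains no primitive third root of unity, so Corollary~\ref{cor:c21ES2} is applicable; moreover Proposition~\ref{prp:c21ES2} already identifies $\mathrm{G}_3^2(K)\simeq\langle 2187,303\rangle(-\#1;1)^3$, so the $3$-tower group $G=\mathrm{G}_3^\infty(K)$ lies in the cover of this metabelian group determined in Theorem~\ref{thm:c21ES2}.

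The decisive step is to compute $\tau^{(2)}(K)$ for each of the five discriminants. Concretely, for each of the four unramified cyclic cubic extensions $L_1,\ldots,L_4$ of $K$ one determines the abelian type invariants of the $3$-class groups of its unramified cyclic cubic extensions, using PARI/GP and cross-checking with MAGMA exactly as in the proof of Proposition~\ref{prp:RQFc21GS}. The diagnostic components are $\tau_1(L_i)$ for $i=1,3,4$: I expect to obtain $\tau_1(L_i)=(431,\mathbf{(21)^3})$ throughout for the four fields of part~(1) and $\tau_1(L_i)=(431,\mathbf{(31)^3})$ throughout for the single field of part~(2), with $\tau_1(L_2)=((431)^4)$ in every case. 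Substituting into Corollary~\ref{cor:c21ES2} then pins $G$ down to $\langle 3^7,303\rangle\mathbf{(-\#1;1)^3-\#1;7}$ in the first situation and restricts $G$ to the three listed groups in the second. Since all candidate groups in Corollary~\ref{cor:c21ES2} have derived length $\mathrm{dl}(G)=3$ and Proposition~\ref{prp:c21ES2} already gives $\ell_3(K)\ge 3$, we conclude $\ell_3(K)=3$ in all five cases.

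The hard part is the computation rather than any new idea: the discriminants approach $10^9$, so determining $\mathrm{Cl}_3$ for the degree-$18$ unramified cyclic cubic extensions of the $L_i$ is expensive, and this is precisely why only these five fields can be handled, just as in the remarks after Theorems~\ref{thm:RQFc21GS} and~\ref{thm:RQFc21ES1}. A secondary, genuinely structural point is that in part~(2) the iterated IPAD of second order does not separate the three groups in the cover of $\langle 2187,303\rangle(-\#1;1)^3$ that survive the Shafarevich bound, so the stated trichotomy is best possible with this invariant; the tower length $\ell_3(K)=3$, however, is unaffected by this ambiguity.
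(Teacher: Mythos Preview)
Your proposal is correct and follows essentially the same approach as the paper: verify that the five fields have the stated first IPAD and capitulation type, compute the second-order IPAD to distinguish the two cases, and then invoke Corollary~\ref{cor:c21ES2}. The paper's own proof is a terse two-line version of exactly this argument (\lq\lq suitable \(2^{\mathrm{nd}}\) IPAD \ldots\ consequence of Corollary~\ref{cor:c21ES2}\rq\rq), so your write-up is simply a more explicit rendering of the same logic, with helpful added commentary on the computational cost and on why the trichotomy in part~(2) cannot be resolved by \(\tau^{(2)}\) alone.
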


\begin{proof}
Since all these real quadratic fields \(K=\mathbb{Q}(\sqrt{d})\) have
\(3\)-capitulation type \(\varkappa_1(K)=(2034)\), \(1^{\mathrm{st}}\) IPAD  
\(\tau^{(1)}(K)=\lbrack 1^2;21,\mathbf{4^2},(21)^2\rbrack\)
and suitable \(2^{\mathrm{nd}}\) IPAD,
the claim is a consequence of Corollary
\ref{cor:c21ES2}.
\end{proof}



\section{Proof of the main result}
\label{s:ProofMainResult}

\noindent
Combining several remarks and the results of two theorems,
we are finally able to prove our initial Main Theorem
\ref{thm:Main}.

\begin{proof}
According to Remark
\ref{rmk:RQFc18GS},
Remark
\ref{rmk:RQFc18ES1}
and Theorem
\ref{thm:RQFc18ES2},
there exist exactly \(4318+138+5=4461\) real quadratic fields \(K=\mathbb{Q}(\sqrt{d})\)
of type c.18, \(\varkappa_1(K)=(0122)\),
and according to Remark
\ref{rmk:RQFc21GS},
Remark
\ref{rmk:RQFc21ES1}
and Theorem
\ref{thm:RQFc21ES2},
there exist exactly \(4377+146+5=4528\) real quadratic fields \(K=\mathbb{Q}(\sqrt{d})\)
of type c.21, \(\varkappa_1(K)=(2034)\),
in the range of fundamental discriminants \(0<d<10^9\).
They all have a \(3\)-class field tower of exact length \(\ell_3(K)=3\).
\end{proof}



\section{When do we consider the \(p\)-class tower as \lq\lq known\rq\rq ?}
\label{s:KnownClsFldTow}

\noindent
As we have seen in this article,
there are various pieces of information contributing to the knowledge
of the \(p\)-class tower \(\mathrm{F}_p^\infty(k)\) of a number field \(k\):

\begin{enumerate}
\item
the \(p\)-class tower group \(G=\mathrm{Gal}(\mathrm{F}_p^\infty(k)\vert k)\) of \(k\),
\item
the length \(\ell_p(k)=\mathrm{dl}(G)\) of the \(p\)-class tower of \(k\),
\item
iterated IPADs \(\tau^{(n)}(G)\) and IPODs \(\varkappa^{(n)}(G)\) of higher order \(n\ge 2\),
\item
the second \(p\)-class group
\(\mathfrak{G}=\mathrm{Gal}(\mathrm{F}_p^2(k)\vert k)\simeq G/G^{\prime\prime}\) of \(k\),
\item
the order \(\lvert\mathfrak{G}\rvert\), nilpotency class \(\mathrm{cl}(\mathfrak{G})\)
and coclass  \(\mathrm{cc}(\mathfrak{G})\) of \(\mathfrak{G}\),
\item
the annihilator ideal \(\mathfrak{A}\) of the commutator subgroup \(\mathfrak{G}^\prime\)
of \(\mathfrak{G}\),
\item
the \(p\)-capitulation kernels \(\ker(j_{L\vert k})\)
of the unramified cyclic extensions \(L\vert k\)
of degree \(p\), forming the first layer \(\varkappa_1(k)\) of the
\(p\)-capitulation type, or TKT, of \(k\),
\item
the \(p\)-class groups \(\mathrm{Cl}_p(L)\) of the unramified cyclic extensions \(L\vert k\)
of degree \(p\), forming the first layer \(\tau_1(k)\) of the TTT of \(k\),
\item
the \(p\)-class group \(\mathrm{Cl}_p(k)\simeq G/G^\prime\) of \(k\),
\item
the \(p\)-class rank \(r_p(k)=\dim_{\mathbb{F}_p}(\mathbb{F}_p\otimes_{\mathbb{Z}_p}\mathrm{Cl}_p(k))\) of \(k\).
\end{enumerate}

\begin{figure}[ht]
\caption{Pieces of information on a \(p\)-class tower \(\mathrm{F}_p^\infty(k)\)}
\label{fig:ClassTowerInfo}


\setlength{\unitlength}{1cm}
\begin{picture}(10,10)(-5,1)

\put(0,10){\makebox(0,0)[cc]{(1) \(G=\mathrm{G}_p^\infty(k)\)}}

\put(0,9.5){\vector(-1,-1){1}}
\put(-1,8){\makebox(0,0)[rc]{(2) \(\ell_p(k)=\mathrm{dl}(G)\)}}
\put(0,9.5){\vector(1,-1){1}}
\put(1,8){\makebox(0,0)[lc]{(3) \(\tau^{(n)}(G),\varkappa^{(n)}(G)\)}}

\put(0,9.5){\vector(0,-1){2}}
\put(0,7){\makebox(0,0)[cc]{(4) \(\mathfrak{G}=\mathrm{G}_p^2(k)\simeq G/G^{\prime\prime}\)}}

\put(0,6.5){\vector(-3,-1){3}}
\put(-3,5){\makebox(0,0)[rc]{(5) \(\mathrm{cl}(\mathfrak{G}^\prime),\mathrm{cc}(\mathfrak{G}^\prime)\)}}
\put(0,6.5){\vector(3,-1){3}}
\put(3,5){\makebox(0,0)[lc]{(6) \(\mathfrak{A}(\mathfrak{G}^\prime)\)}}

\put(0,6.5){\vector(-1,-1){2}}
\put(-1,4){\makebox(0,0)[rc]{(7) \(\varkappa_1(k)=\varkappa_1(\mathfrak{G})\)}}
\put(0,6.5){\vector(1,-1){2}}
\put(1,4){\makebox(0,0)[lc]{(8) \(\tau_1(k)=\tau_1(\mathfrak{G})\)}}

\put(0,6.5){\vector(0,-1){3}}
\put(0,3){\makebox(0,0)[cc]{(9) \(\mathrm{Cl}_p(k)\simeq\mathfrak{G}/\mathfrak{G}^\prime\)}}

\put(-3,3.5){\vector(1,-1){2}}
\put(3,3.5){\vector(-1,-1){2}}
\put(0,2.5){\vector(0,-1){1}}
\put(0,1){\makebox(0,0)[cc]{(10) \(r_p(k)\)}}

\end{picture}

\end{figure}
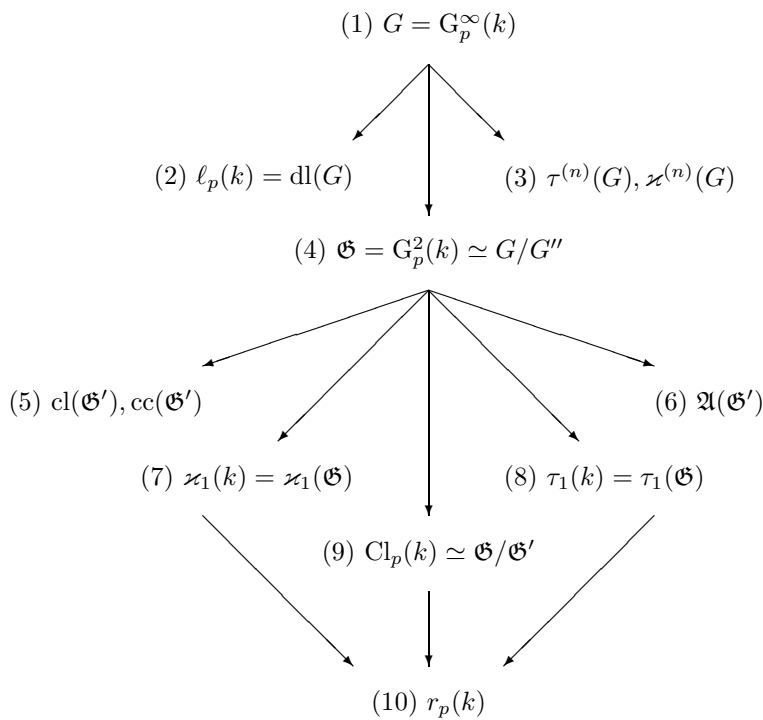



\bigskip
\noindent
Figure
\ref{fig:ClassTowerInfo}
shows the logical implications between the pieces of information,
which are valid generally, since they arise from mappings,
such as \(\mathrm{dl}()\), \(\mathrm{cl}()\), \(\mathrm{cc}()\), and so on.

\bigskip
\noindent
However, in special cases, there exist further logical relations.

\begin{itemize}
\item
If the \(p\)-class rank \(r_p(k)\) of \(k\) is sufficiently large,
then the Golod--Shafarevich Theorem
\cite{GoSh}
of 1964 ensures
an infinite \(p\)-class tower with length \(\ell_p(k)=\infty\),
i.e. (1)\(\rightarrow\)(9).
\item
Conversely, a finite \(p\)-class tower with length \(\ell_p(k)<\infty\)
enforces an upper bound on the \(p\)-class rank \(r_p(k)\) of \(k\)
\cite{GoSh},
i.e. (9)\(\rightarrow\)(1).
\item
The Scholz--Taussky Theorem
\cite{SoTa}
of 1934 ensures
a finite \(p\)-class tower with exact length \(\ell_p(k)=2\),
if the annihilator \(\mathfrak{A}\) is the special ideal \(\mathfrak{L}_2\),
i.e. (5)\(\rightarrow\)(9).
\item
For a group \(\mathfrak{G}\) without defect of commutativity (\(k(\mathfrak{G})=0\)),
the knowledge of the nilpotency class \(\mathrm{cl}(\mathfrak{G})=\alpha+1\)
and the coclass  \(\mathrm{cc}(\mathfrak{G})=\beta\) of \(\mathfrak{G}\)
is equivalent with the knowledge of the annihilator ideal
\(\mathfrak{A}=\mathfrak{R}_{\alpha,\beta}\)
and with the knowledge of the first layer \(\tau_1(K)\) of the TTT of \(\mathfrak{G}\),
i.e. (6)\(\leftrightarrow\)(5) and (6)\(\leftrightarrow\)(3).
\item
For \(p=3\) and \(\mathrm{Cl}_p(k)\simeq (3,3)\),
the TTT \(\tau_1(k)\) occasionally determines the TKT \(\varkappa_1(k)\)
\cite[Cor.4.2.2, p.436, and Cor.4.3.2, p.439]{Ma3},
i.e. (3)\(\rightarrow\)(4).
\item
For \(p=5\) and \(\mathrm{Cl}_p(k)\simeq (5,5)\), 
the TKT \(\varkappa_1(k)\) occasionally determines the TTT \(\tau_1(k)\)
\cite[Tbl.3.11, p.447, and the remark concerning the counter \(\eta\)]{Ma4},
i.e. (4)\(\rightarrow\)(3),
or vice versa
\cite[Thm.3.8 and Tbl.3.3]{Ma4}.
\end{itemize}

\noindent
We are convinced that the \(p\)-class tower \(\mathrm{F}_p^\infty(k)\) of a number field \(k\)
can only be considered as \lq\lq known\rq\rq,
if we are able to give a presentation of the \(p\)-class tower group
\(G=\mathrm{Gal}(\mathrm{F}_p^\infty(k)\vert k)\)
and not merely to specify the \(p\)-tower length \(\ell_p(k)\).
This information permits to compute all data in Figure
\ref{fig:ClassTowerInfo}.
Therefore, we made the initial Main Theorem
\ref{thm:Main}
precise by stating and proving the detailed Theorems
\ref{thm:RQFc18GS},
\ref{thm:RQFc18ES1},
\ref{thm:RQFc21GS} and
\ref{thm:RQFc21ES1}.

In view of future research, we point out that, although it is known
\cite[\S\ 7]{Ma6}
that the complex quadratic field \(k=\mathbb{Q}(\sqrt{d})\) with discriminant \(d=4\,447\,704\)
and \(3\)-class group of type \((3,3,3)\) has a \(3\)-class tower of length \(\ell_3(k)=\infty\),
according to the results of Koch and Venkov
\cite{KoVe},
there will remain a lot of work to do for establishing
a pro-\(3\) presentation of the \(3\)-tower group
\(G=\mathrm{Gal}(\mathrm{F}_3^\infty(k)\vert k)\)
of this mysterious field and determining
the path from the root of the tree \(\mathcal{T}(\langle 8,5\rangle)\)
to its second \(3\)-class group
\(\mathfrak{G}=\mathrm{Gal}(\mathrm{F}_3^2(k)\vert k)\simeq G/G^{\prime\prime}\).



\section{Acknowledgements}
\label{s:Acknowledgements}

\noindent
We gratefully acknowledge that our research is supported financially by the
Austrian Science Fund (FWF): P 26008-N25.

We express our gratitude to M.F. Newman (ANU, Canberra, ACT) for pointing out that
the capable groups \(G\) in Theorems
\ref{thm:c18ES1}
and 
\ref{thm:c21ES1}
have \(p\)-multiplicator rank \(\mu(G)=4\)
and thus satisfy the inequality \(d_2(G)\ge\mu(G)=d_1(G)+2\)
between their relation rank \(d_2(G)\) and generator rank \(d_1(G)=2\).
This disqualifies them as candidates for \(p\)-class tower groups
of real quadratic fields, according to the corrected Shafarevich Theorem
\ref{thm:Shafarevich}.

Sincere thanks are given to Michael R. Bush (WLU, Lexington, VA)
for making available brand-new
numerical results on IPADs of real quadratic fields \(K=\mathbb{Q}(\sqrt{d})\),
and the distribution of discriminants \(d<10^9\) over these IPADs,
computed in July 2015.

We are indebted to Nigel Boston, Michael R. Bush and Farshid Hajir
for kindly making available an unpublished database containing
numerical results of their paper
\cite{BBH}
and a related paper on real quadratic fields, which is still in preparation.

A succinct version of this article
has been presented at the 22nd Czech and Slovak International Conference on Number Theory
in Liptovsk\'y J\'an, Slovakia, on August 31, 2015.




\end{document}